	\newtheorem{thm}{Theorem}[section]
	\newtheorem{cor}[thm]{Corollary}
	\newtheorem{lem}[thm]{Lemma}
	\newtheorem{prop}[thm]{Proposition}
	\theoremstyle{definition}
	\theoremstyle{remark}
	\newtheorem{rem}[thm]{Remark}
	\numberwithin{equation}{section}
	\numberwithin{equation}{section}	
    \providecommand{\bysame}{\leavevmode\hbox to3em{\hrulefill}\thinspace}
	\newcommand{\C}{\mathbb{C}}
	\newcommand{\ra}{\rightarrow}
	\newcommand{\deck}{\upgamma}
	\newcommand{\al}{\mathfrak{a}}
	\newcommand{\z}{\zeta}
	\newcommand{\ov}{\overline}
	\newcommand{\sm}{\setminus}
	\newcommand{\ep}{\epsilon}
	\newcommand{\no}{\noindent}
	\newcommand{\Om}{\Omega}
	\newcommand{\ti}{\tilde}
        \newcommand{\w}{\widetilde}
	\newcommand{\iso}{\mathfrak{i}}
	\newcommand{\Aut}[1]{{\rm Aut}(#1)}
	\newcommand{\I}[1]{\textsf{int}\left(#1\right)}
        \newcommand{\h}{\widehat}
	\newcommand{\pr}{\mathbf{p}}
    \newcommand{\G}{\mathfrak{G}}
        \newcommand{\cover}{\mathbb{C} \times \left(\mathbb{C} \setminus \overline{\mathbb{D}}\right)}
    \newcommand{\p}{\mathrm{P}}
\begin{document}
	\title{Rigidity of the escaping set of polynomial automorphisms of $\C^2$}
	\keywords{H\'{e}non map, Escaping set, Automorphism group, Short $\C^2$}
	\subjclass[2020]{Primary: 37F80, 32M18; Secondary: 32Q02, 32T05}
	\author{Sayani Bera, Kaushal Verma} 
\address{SB: School of Mathematical and Computational Sciences, Indian Association for the Cultivation of Science, Kolkata-700032, India}
\email{sayanibera2016@gmail.com, mcssb2@iacs.res.in}

\address{KV: Department of Mathematics, Indian Institute of Science, Bengaluru-560012, India}
\email{kverma@iisc.ac.in}

\begin{abstract}
Let $H$ be a polynomial automorphism of $\C^2$ of positive entropy and degree $d \ge 2$. We prove that the escaping set $U^+$ (or equivalently, the non-escaping set $K^+$), of $H$ is rigid under the action of holomorphic automorphisms of $\C^2$. Specifically, every holomorphic automorphism of $\C^2$ that preserves $U^+$ takes the form $L \circ H^s$ where $s \in \mathbb{Z}$ and $L$ belongs to a finite cyclic group of affine maps that preserve the escaping set.

\medskip Second, note that the sub--level sets $\{G^+ < c\}$, $c > 0$, of the Green’s function $G^+$ associated with the map $H$ are canonical examples of {\it Short} $\C^2$'s. As a  consequence of the above theorem, we show that the holomorphic automorphisms of these {\it Short} $\C^2$'s are affine automorphisms of $\C^2$ preserving the escaping set $U^+$. Hence, the automorphism group of these {\it Short} $\C^2$'s are the same for every $c>0$ and is a finite cyclic group.
\end{abstract}
\maketitle

\section{Introduction} 
The goal of this article is to study a {\it rigidity} property of the escaping set (or equivalently, the non-escaping set) of polynomial automorphisms of $\C^2$ of positive topological entropy. By a classical theorem of Friedland--Milnor \cite{FM}, they are conjugate to a polynomial automorphism $H$ of the form
\begin{align}\label{e:henon}
H(x,y)=H_m \circ \cdots \circ H_1(x,y)
\end{align}
where $H_i(x,y)=(y, p_i(y)-a_i x)$, $p_i$ is a polynomial in one variable with degree at least $2$ and $a_i \neq 0$ for every $1 \le i \le m$. Such maps $H$ and their factors $H_i$ as in (\ref{e:henon}) will be referred to as {\it generalised} and {\it simple} H\'enon maps respectively. Note that the degree of the first and second coordinates of $H$ are  
\[
d'=d_{m-1}\hdots d_1 \text{ and } d=d_md_{m-1}\hdots d_1
\]
respectively, and that the Jacobian determinant of $H$ is a constant and equals $a = a_1a_2 \ldots a_m$.

\medskip 

Let ${\rm Aut}(\C^2)$ denote the group of holomorphic automorphisms of $\C^2$. We identify all elements of ${\rm Aut}(\C^2)$ that preserve the escaping set (or equivalently, the non-escaping set) associated with a generalised H\'enon map $H$. Before proceeding, let us first recall the following definitions. The {\it non-escaping set} or the {\it bounded Julia set} associated with a generalised H\'{e}non map $H$ is defined as
$K^+=\{z \in \C^2: \|H^n(z)\| \text{ is bounded}\}$ and the {\it escaping set} is $U^+=\C^2 \setminus K^+$. Furthermore, $H$ extends to $\mathbb{P}^2$ as a birational map with an indeterminacy at the point $I^+=[1:0:0]$ and a super-attracting fixed point at $I^-=[0:1:0]$. The escaping set $U^+$ corresponds to the intersection of the basin of attraction of $I^-=[0:1:0]$ with $\C^2$, and $I^+$ is the only accumulation point of the non-escaping set $K^+$ in $\mathbb{P}^2$. See \cite{BS2}, \cite{FS} for these details. The inverse of $H$,  
\[H^{-1}(x,y)=H_1^{-1} \circ \cdots \circ H_m^{-1}(x,y)\]
has an indeterminacy at $I^-=[0:1:0]$ when extended to $\mathbb{P}^2$ and its {\it non-escaping} and {\it escaping sets} are defined as $K^-=\{z \in \C^2: \|H^{-n}(z)\| \text{ is bounded}\}\text{ and }U^-=\C^2 \setminus K^-$ respectively.
Furthermore, $U^-$ corresponds to the basin of attraction of $I^+=[1:0:0]$ of $H^{-1}$ when extended to $\mathbb{P}^2$ and $\ov{K^-}=K^- \cup I^-$.  

\medskip

\medskip The phenomenon of {\it rigidity} for generalised and simple maps H\'enon has various manifestations. To place this work in context, it is useful to recall a few related results.

\medskip

First, given a pair of generalised H\'enon maps $H$ and $F$ that have the same non-escaping set, Lamy \cite{L:alternative} showed that $H, F$ must be the same up to possibly different iterates. That is, there exist positive integers $m, n$ such that $H^m = F^n$. As a consequence, $H^{-1}$ and $F^{-1}$ will also have the same non-escaping and escaping sets.

\medskip

Second, the notion of {\it very rigid} appears in the work of Dinh--Sibony \cite{DS:rigidity} who showed that there exists a unique positive $dd^c$-closed $(1,1)$ current of mass $1$ on $K^+$. This extends results from \cite{BS2}, \cite{FS}, and leverages the potential-theoretic properties of the Green's function associated with $K^+$.

\medskip

Third, \cite{BPV:rigidity} shows that if $f \in {\rm Aut}(\C^2)$ preserves both $K^{\pm}$, then the following dichotomy holds: either $f$ is linear or one of $f$ or $f^{-1}$ must be a generalised H\'enon map that commutes with $H$ up to a diagonal unimodular change of variables. As a consequence, the non-escaping sets of $f$ (or $f^{-1}$ as the case may be) and $H$ must be the same. The proof uses the very rigid property of $K^\pm$ alluded to above and the dependence of the Green's functions on the  B\"{o}ttcher functions near the super-attracting fixed points $I^\pm$ in $\mathbb{P}^2$ constructed in \cite{Hu-Ov1}. Furthermore, by combining the above results, it is shown in \cite{BPV:IMRN} that if $f$ is an affine map that preserves the non-escaping set of a {\it simple} H\'{e}non map $H$, then $f$ is conjugate to the linear map $(x, y) \mapsto (\eta x, \eta^d y)$ where $\eta$ is a $(d^2-1)$-root of unity, and $f \circ H^2= H^2\circ f$. 

\medskip

Next, Pal in \cite{P:biholomorphic} showed that two simple H\'{e}non maps $H, F$ with the same degree that have biholomorphic escaping sets are related via affine automorphisms $A_1$ and $A_2$, i.e., $H=A_1 \circ F \circ A_2$. This generalises an earlier result of Bonnot--Radu--Tanase \cite{BRT:biholomorphic}.

\medskip

Finally, and more recently, Cantat--Dujardin \cite{CD:rigidity} show that if a pair of polynomial automorphisms $f, g$ in ${\rm Aut}(\C^2)$ with dynamical degree $d \ge 2$ are conjugate by some $\varphi \in {\rm Aut}(\C^2)$, then $\varphi$ must be a polynomial automorphism. This confirms a conjecture of Friedland--Milnor and can be viewed as a rigidity result for generalised H\'enon maps. Related to this circle of ideas on conjugacy and motivated by a question of Favre \cite{F:Kato_surfaces}, Bacher \cite{Ba:Kato} shows that a pair of generalised H\'enon maps are conjugate in ${\rm Aut}(\C^2)$ precisely when the Kato surfaces associated to them near the super-attracting fixed point $I^{-}$ are biholomorphic.

\medskip

The results in \cite{BPV:IMRN} and \cite{P:biholomorphic} build on the unpublished work of Bousch \cite{Bousch} that uses the description of an intermediate cover of $U^+$ given by Hubbard--Oberste-Vorth in \cite{Hu-Ov1} to compute the automorphism group of $U^+$ for simple quadratic H\'{e}non maps.  Our goal is to further generalise these results by identifying all automorphisms of $\C^2$ that preserve $U^+$ associated with generalised H\'enon maps. Here is the main result:

\begin{thm}\label{t:main}
Let $H$ be a generalised H\'enon map of the form (\ref{e:henon}), i.e., 
\begin{align*}
H(x,y)=H_m \circ \cdots \circ H_1(x,y)
\end{align*}
where $H_i(x,y)=(y, p_i(y)-a_i x)$, $p_i$ is a polynomial with degree $d_i \ge 2$ and $a_i \neq 0$ for every $1 \le i \le m$. Then the non-escaping set $K^+$, or equivalently, the escaping set $U^+$, associated with $H$ is rigid under the action of ${\rm Aut}(\C^2)$. In other words, any $f \in {\rm Aut}(\C^2)$ that satisfies 
$$f(K^+) = K^+\text{ or equivalently } f(U^+)=U^+,$$ 
is of the form $f^r=L \circ H^s\text{ with }r,s \in \mathbb{Z}$ and $r>0$, where $L$ is an affine map that preserves both $K^\pm$. In addition,
\[\mathcal{L}=\{L: L \text{ is an affine map such that }L(K^\pm)=K^\pm\} \le \mathbb{Z}_{(d+d')(d-1)} \]
\end{thm}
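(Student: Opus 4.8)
The plan is to follow the route of Bousch \cite{Bousch} and its descendants \cite{BPV:IMRN}, \cite{P:biholomorphic} --- understanding $U^+$ through the intermediate cover of Hubbard--Oberste-Vorth \cite{Hu-Ov1} --- but now for a generalised map $H=H_m\circ\cdots\circ H_1$ and for an \emph{arbitrary} $f\in{\rm Aut}(\C^2)$, not merely an affine one. First I would pin down the interaction of $f$ with the Green's function $G^+$: the current $f^*(dd^cG^+)$ is closed, positive, and supported on $J^+=\partial K^+$, and after checking it has finite mass (which is where transcendental behaviour of $f$ is first ruled out), the very rigid property of $K^+$ from \cite{DS:rigidity} forces it to be a positive multiple of $dd^cG^+$, so $G^+\circ f=\lambda\,G^+$ on $U^+$ for some $\lambda>0$. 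Together with the asymptotics of $G^+$ near infinity this yields a polynomial growth bound for $f$, so $f$ is a polynomial automorphism with $\lambda=\deg f\in\mathbb Z_{\ge1}$ extending to $\mathbb P^2$; tracking the super-attracting point $I^-$ then shows $\lambda$ is a power of $d$, say $\lambda=d^{|s|}$.

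Replacing $f$ by $f\circ H^{-s}$, I may assume $G^+\circ f=G^+$, so that $f$ preserves every level set of $G^+$ and lifts to a biholomorphism of the cover $\w{U^+}\cong\cover$ of \cite{Hu-Ov1} commuting with the lift of $H$ up to the deck action. Analysing ${\rm Aut}(\w{U^+})$ as Bousch does in the quadratic case, but now with the local degree data of the composed map $H$ at $I^-$ (and of $H^{-1}$ at $I^+$), I would show that the relevant deck group is a finite cyclic group of order dividing $(d+d')(d-1)$ --- collapsing to $d^2-1$ when $m=1$ --- that $f$ coincides with one of its elements, and that that element is realised by an affine map. Since this affine map preserves $K^+$, its conjugate $fHf^{-1}$ is a positive-entropy polynomial automorphism with the same non-escaping set as $H$, so by Lamy \cite{L:alternative} and \cite{BPV:rigidity} it preserves $K^-$ as well; altogether $f=L\circ H^s$ with $L$ affine and $L(K^\pm)=K^\pm$, which is the first assertion.

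It remains to bound $\mathcal L$. An affine $L$ with $L(K^\pm)=K^\pm$ fixes both $I^+$ and $I^-$, hence $L(x,y)=(\z x+\beta,\z'y+\delta)$; applying the Dinh--Sibony uniqueness to $H$ and to $H^{-1}$ gives $G^\pm\circ L=G^\pm$, so $|\z|=|\z'|=1$ and $L$ acts on the B\"{o}ttcher coordinates at $I^-$ and at $I^+$ by multiplication by $\z'$ and $\z$ respectively. No nontrivial translation can preserve both $K^+$ and $K^-$ --- the first confines its vector to the $x$-axis direction and the second to the $y$-axis direction --- so $L\mapsto(\z,\z')$ is injective on $\mathcal L$. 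Finally, the identities $\varphi^+\circ H=(\varphi^+)^d$ and $\varphi^-\circ H^{-1}=(\varphi^-)^d$ together with the fact (again via \cite{L:alternative}) that every $L\in\mathcal L$ commutes with a power of $H$ constrain the pair $(\z,\z')$ to a cyclic group of roots of unity whose order, read off through the composition structure of $H$, divides $(d+d')(d-1)$; hence $\mathcal L\le\mathbb Z_{(d+d')(d-1)}$.

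The main obstacle is the second paragraph: showing that a holomorphic automorphism of $\C^2$ preserving $U^+$ is already pinned down by the finite combinatorial structure of the end of $U^+$ at $I^-$. Two points are delicate: (i) excluding transcendental $f$, which needs enough regularity of $G^+$ up to $J^+$ and near $I^-$ to upgrade $G^+\circ f=\lambda G^+$ into a genuine growth estimate; and (ii) transporting the quadratic bookkeeping of \cite{Bousch} to $H=H_m\circ\cdots\circ H_1$, where the intertwining of the factors $H_i$ modifies the deck group of $\w{U^+}$ and replaces $d^2-1$ by $(d+d')(d-1)$. Once $f$ is known to be a polynomial automorphism preserving $U^+$, the remaining identification $f=L\circ H^s$ and the description of $\mathcal L$ rest on the already-available results \cite{FM}, \cite{L:alternative}, \cite{BPV:rigidity}, \cite{BPV:IMRN}.
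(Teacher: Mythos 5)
Your first paragraph contains the decisive gap. From $G^+\circ f=\lambda\,G^+$ together with the asymptotics of $G^+$ near infinity you deduce a polynomial growth bound for $f$, hence that $f$ is a polynomial automorphism. That implication does not hold. The Green's function $G^+$ is comparable to $\log\|z\|$ only near $V^+$; it vanishes identically on $K^+$, which is unbounded and clusters at $I^+$, and it tends to $0$ along $H^{-1}$-orbits in $V^-$. Consequently the identity $G^+\circ f=\lambda\,G^+$ places no constraint whatsoever on $f$ along $K^+$, nor more generally in the direction of $I^+$, and cannot by itself exclude transcendental behaviour there. The natural companion estimate would involve $G^-$, but invoking it presupposes $f(K^-)=K^-$ --- which is exactly what the theorem must produce, and which the earlier result \cite{BPV:rigidity} had to \emph{assume}. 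The preliminary finite-mass check on $f_*(\mu^+)$ has the same difficulty: knowing the support lies in $K^+$ gives no a priori mass bound for a transcendental $f$, so you cannot even get to $G^+\circ f=\lambda\,G^+$ without first controlling $f$.

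The paper works in the opposite order and avoids this circularity. After building the cover $\cover$ (Theorem \ref{t:HOv-gen}) and showing ${\rm Aut}_1(U^+)\le \C\rtimes\mathbb{Z}_{d_0(d-1)}$ (Theorem \ref{t:escaping set}), the decisive step is Theorem \ref{t:Aut(U+) cap Aut(C2)}: an explicit asymptotic analysis of the biholomorphism $\widetilde{\Psi}$ between $\widetilde{W}_M^+$ and $\widetilde{\Omega}_M^+$ near $I^-$ shows that no nontrivial translation $(z,\z)\mapsto(z+c_0,\z)$ in the $\C$-factor of ${\rm Aut}_1(U^+)$ is realised by an automorphism of $\C^2$. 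It follows (Theorem \ref{t:linear maps}) that ${\rm Aut}_1(U^+)\cap{\rm Aut}(\C^2)$ is a \emph{finite} subgroup of $\mathbb{Z}_{d_0(d-1)}$ consisting of affine maps preserving $K^\pm$. The proof of Theorem \ref{t:main} is then a short pigeonhole argument: the commutators $f\circ H^n\circ f^{-1}\circ H^{-n}$ all lie in this finite group, so $f$ commutes with some power of $H$, so $f(K^-)=K^-$, and only then does \cite{BPV:rigidity} give $f=L\circ H^s$. What your proposal is missing is this commutator trick together with the finiteness statement that feeds it; your second paragraph gestures at the cover analysis but does not isolate where the holomorphy of $f$ on all of $\C^2$ is actually played off against the combinatorics of the cover near $I^-$, and without that input the passage from ``$f$ preserves $U^+$'' to ``$f$ is polynomial'' is not established.
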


\noindent Here, $f$ is essentially a rational root of the H\'{e}non map $H$, in the sense of \cite{BF:roots}. However, the assumption that $f(K^-) = K^-$, which was imposed in \cite{BPV:rigidity} is not required, and this is the main thrust of this theorem. Further, in the spirit of \cite[Theorem 1.1]{BPV:rigidity} it follows that there exists a H\'{e}non map $\rho_H$ (an irreducible root of $H$, again in the sense of \cite{BF:roots}) such that
\[  f=L \circ \rho_H^s \text{ and }f \circ H=\tilde{L} \circ H \circ f\]
where $L, \tilde{L} \in \mathcal{L}$ and $s \in \mathbb{Z}$. The proof consists of several steps that are summarized as follows. 

\medskip

First, Theorem \ref{t:HOv-gen} in Section $2$ extends the construction of the intermediate cover of $U^+$ by Hubbard--Oberste-Vorth in \cite{Hu-Ov1} to the case of generalised H\'enon maps. Next, let ${\rm Aut}_1(U^+)$ be the subgroup of the holomorphic automorphism group ${\rm Aut}(U^+)$ whose elements induce the identity map on the fundamental group $\pi_1(U^+)$. Theorem \ref{t:HOv-gen} leads to a better understanding of the structure of ${\rm Aut}(U^+)$. This is the content of Theorem \ref{t:escaping set}, which shows that modulo a copy of $\C$, ${\rm Aut}(U^+)$ is at most finite cyclic. The proof relies on adapting several ideas from \cite{Bousch}. Using this information about the structure of ${\rm Aut}(U^+)$, Theorem \ref{t:Aut(U+) cap Aut(C2)} identifies all elements of ${\rm Aut}(U^+) \cap {\rm Aut}(\C^2)$ whose lifts to the intermediate cover of $U^+$ constructed above have a specific form. The final step is completed in Theorem \ref{t:linear maps} which extends the observations made in the proof of Theorem \ref{t:Aut(U+) cap Aut(C2)} and shows that ${\rm Aut}(U^+) \cap {\rm Aut}(\C^2)$ is a finite cyclic group whose order can be bounded above in terms of the degree of $H$. Similar results were observed earlier in \cite{B:rigidity-II}.

\medskip
Recall that the Green's functions associated to the sets $K^\pm$ are defined as
\[G_H^+(z)=\lim_{n \to \infty} \frac{1}{d^n} \log^+\|H^n(z)\|\text{ and }G_H^-(z)=\lim_{n \to \infty} \frac{1}{d^n} \log^+\|H^{-n}(z)\|.\]
They are non-negative, continuous, plurisubharmonic functions on $\C^2$ and satisfy the functorial property 
\begin{align}\label{e:functorial} 
G_H^\pm(H^s(z))=d^{\pm s}G_H^\pm(z)
\end{align}
for every $s \in \mathbb{Z}$.
Further, $K^\pm=\{G_H^\pm=0\}$ and 
\[ \mu_H^+=\frac{1}{2 \pi} dd^c G_H^+\text{ and }\mu_H^-=\frac{1}{2 \pi} dd^c G_H^-\]
are $dd^c$-closed positive $(1,1)$ currents of mass 1 supported on $K_H^+$ and $K_H^-$, respectively. As indicated earlier, $\mu_H^\pm$ are precisely those currents that bestow on $K_H^{\pm}$ their very rigid property. 

\medskip

\noindent For every $c>0$, the sub-level set
\[\Omega_c=\{z \in \C^2: G_H^+(z)<c\}\]
is known to be a {\it Short} $\C^2$. {\it Short} $\C^2$'s are domains whose infinitesimal Kobayashi metric vanishes identically but at the same time admit a non-constant bounded plurisubharmonic function and which can be exhausted by an increasing union of domains biholomorphic to the ball. They were first observed and constructed by Forn{\ae}ss in \cite{Fornaess:ShortC2}. 

\medskip 

In \cite{BPV:IMRN} the analytical and topological structure of an intermediate cover of a punctured {\it Short} $\C^2$, i.e., $\Omega_c'=\Omega_c\setminus K^+$, corresponding to a simple H\'enon map was explored in a manner similar to \cite{Hu-Ov1}, \cite{BRT:biholomorphic} and \cite{Bousch}.  The philosophy behind this analysis is to understand the rigidity of an appropriate punctured neighbourhood of the super-attracting fixed point $I^-$ of $H$, in contrast to the rigidity of the Kato surfaces associated to the neighbourhood of $I^-$, raised by Favre in \cite{F:Kato_surfaces}. 

\medskip As a consequence, it is proved therein that the holomorphic automorphism group of certain punctured {\it Short} $\C^2$'s corresponding to a simple H\'{e}non map can at most be finitely many copies of $\C$. Let ${\rm Aut}(\Omega_c)$ and ${\rm Aut}(\Omega'_c)$ denote the holomorphic automorphism groups of $\Omega_c$ and $\Omega'_c$ respectively. The subgroup of ${\rm Aut}(\Omega'_c)$ consisting of those elements that induce the identity map on the fundamental group $\uppi_1(\Omega'_c)$ will be denoted by ${\rm Aut}_1(\Omega'_c)$.

\medskip Theorem \ref{t:HOv-gen} has consequences for understanding both ${\rm Aut}(\Omega_c)$ and ${\rm Aut}_1(\Omega'_c)$. First, we observe in Theorem \ref{t:revised BPV} that covers of punctured {\it Short} $\C^2$'s obtained as sub-level sets of generalised H\'enon maps can be constructed. This leads to  Corollary \ref{c:punctured short} in which the automorphism group of such punctured {\it Short} $\C^2$'s can be identified in the same spirit as \cite[Theorem 1.2]{BPV:IMRN}. Furthermore, we will use this representation to conclude the following:


\begin{thm}\label{t:equivalence of automorphisms}
Let $H$ be a generalised H\'enon map. Then
\[{\rm Aut}_1(U^+)={\rm Aut}_1(\Omega_c')\] 
for every $c>0$.
\end{thm}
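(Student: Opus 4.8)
The plan is to establish a chain of natural inclusions and reverse inclusions between the two groups ${\rm Aut}_1(U^+)$ and ${\rm Aut}_1(\Omega_c')$, using the fact that both $U^+$ and $\Omega_c' = \Omega_c \setminus K^+$ are built out of neighbourhoods of the super-attracting fixed point $I^-$ and that their structures are pinned down by the intermediate covers from Theorem \ref{t:HOv-gen} and Theorem \ref{t:revised BPV}. First I would note that since $G_H^+(H(z)) = d\, G_H^+(z)$, the map $H$ (and hence every iterate $H^s$) carries $\Omega_c'$ biholomorphically onto $\Omega_{dc}'$, while $H$ preserves $U^+$. So the family $\{\Omega_c'\}_{c>0}$ is really a single biholomorphism type, and $U^+ = \bigcup_{s \ge 0} H^s(\Omega_c') = \bigcup_{n} \Omega_{d^n c}'$ is an increasing union of copies of $\Omega_c'$. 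This already suggests one inclusion: an automorphism of $U^+$ that is ``eventually'' compatible with this exhaustion should restrict to one of $\Omega_c'$.

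The key step is to compare the covering-space descriptions. By Theorem \ref{t:HOv-gen} the cover of $U^+$ (an intermediate cover with deck group $\mathbb{Z}$, sitting between $U^+$ and its universal cover) has an explicit model on a subset of $\cover$ on which $H$ and the deck transformation act in a standard way; by Theorem \ref{t:revised BPV} and Corollary \ref{c:punctured short} the analogous cover of $\Omega_c'$ has a model that is precisely the ``piece'' of the $U^+$-cover lying over $\Omega_c'$. An element $\varphi \in {\rm Aut}_1(U^+)$, inducing the identity on $\pi_1(U^+) \cong \mathbb{Z}$, lifts to the intermediate cover commuting with the deck transformation; I would show that such a lift must respect the B\"ottcher-type coordinate near $I^-$ and hence preserve the sub-level filtration induced by $G_H^+$ up to the $H$-action, so that a suitable $H^s \circ \varphi$ preserves $\Omega_c'$ and lies in ${\rm Aut}_1(\Omega_c')$. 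Combined with the observation that $H \in {\rm Aut}_1(U^+)$ maps $\Omega_c'$ to $\Omega_{dc}'$ (so conjugating by $H$ identifies ${\rm Aut}_1(\Omega_c')$ with ${\rm Aut}_1(\Omega_{dc}')$ for all $c$), this gives ${\rm Aut}_1(U^+) \subseteq {\rm Aut}_1(\Omega_c') \cdot \langle H \rangle$; I then need to check that the ${\rm Aut}_1(\Omega_c')$-component alone suffices, i.e. that the filtration is actually preserved on the nose.

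For the reverse inclusion I would take $\psi \in {\rm Aut}_1(\Omega_c')$ and extend it to $U^+$: write $U^+ = \bigcup_{n \ge 0} H^n(\Omega_c')$ and define $\widetilde\psi$ on $H^n(\Omega_c')$ by $H^n \circ \psi \circ H^{-n}$; the content here is to verify that these local definitions agree on overlaps $H^n(\Omega_c') \cap H^{n+1}(\Omega_c') = H^n(\Omega_c' \cap H(\Omega_c'))$, which reduces to checking that $\psi$ commutes with $H$ on the overlap region $\Omega_c' \cap H^{-1}(\Omega_c') = \{c/d < G_H^+ < c\}$ — again something that should follow from the rigidity of the cover near $I^-$ and the fact that $\psi$ acts trivially on the relevant fundamental group, so that its lift commutes with both the deck transformation and the return map $H$. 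One then checks $\widetilde\psi \in {\rm Aut}(U^+)$ and that it induces the identity on $\pi_1(U^+)$ (which is generated by a loop already present in $\Omega_c'$, so this is automatic from $\psi \in {\rm Aut}_1(\Omega_c')$).

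I expect the main obstacle to be the compatibility/commutation statement at the heart of both directions: showing that an automorphism of $\Omega_c'$ trivial on $\pi_1$ automatically commutes with the first-return map $H$ on the annular overlap region, equivalently that it preserves each level set $\{G_H^+ = \text{const}\}$ rather than merely permuting them. This is exactly where the explicit intermediate-cover coordinates from Theorems \ref{t:HOv-gen} and \ref{t:revised BPV}, together with the uniqueness of the B\"ottcher coordinate at $I^-$ (and the normalisation forced by the deck group being $\mathbb{Z}$), have to be invoked carefully — a naive gluing would only give an automorphism of $U^+$ well-defined up to the $\mathbb{Z}$-ambiguity of the deck action, and the point is that the ${\rm Aut}_1$ hypothesis kills that ambiguity.
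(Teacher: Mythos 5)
Your forward inclusion (${\rm Aut}_1(U^+)\subseteq {\rm Aut}_1(\Omega_c')$) is essentially the paper's: from Theorem \ref{t:escaping set} the $\G$-lift of $\al\in{\rm Aut}_1(U^+)$ is $(\beta z+\gamma(\z),\alpha\z)$ with $|\alpha|=1$, so it preserves $|\z|$ and hence each $\C\times\mathcal{A}_c$, and projecting gives $\al(\Omega_c')=\Omega_c'$. Your hedging about needing a correction by $H^s$ and landing in ${\rm Aut}_1(\Omega_c')\cdot\langle H\rangle$ is unnecessary (and slightly off, since $H$ multiplies $\pi_1$ by $d$ so $H\notin{\rm Aut}_1(U^+)$ to begin with); the lift preserves the sub-level filtration on the nose, with no twist.

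The reverse inclusion is where the proposal breaks. You want to extend $\psi\in{\rm Aut}_1(\Omega_c')$ to $U^+$ by setting $\widetilde\psi=H^n\circ\psi\circ H^{-n}$ on $H^n(\Omega_c')$, and you reduce consistency on overlaps to ``$\psi$ commutes with $H$,'' which you assert follows because $\psi$ preserves level sets of $G_H^+$. But preserving level sets is strictly weaker than commuting with $H$, and in general the commutation \emph{fails}. Already in the cover: $\tilde H(z,\z)=\bigl(\tfrac{a}{d}z+Q(\z),\z^d\bigr)$, and even the simplest elements of ${\rm Aut}_1(\Omega_c')$, those with lift $(z,\z)\mapsto(z+\eta,\z)$ (which exist for every $\eta\in\C$ by Theorem \ref{t:revised BPV}(iii)), satisfy $\tilde H(z+\eta,\z)=\bigl(\tfrac{a}{d}z+Q(\z)+\tfrac{a}{d}\eta,\z^d\bigr)$, which equals $\tilde H(z,\z)$ shifted by $\eta$ only if $a=d$. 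More generally, if the lift of $\psi$ has rotational part $\alpha$ with $\alpha^{d-1}\neq 1$, even the second coordinates of $\tilde H\circ\tilde\psi$ and $\tilde\psi\circ\tilde H$ differ. So your gluing is not well-defined, and the ${\rm Aut}_1$ hypothesis does not rescue it: the obstruction is not a $\pi_1$ ambiguity but an honest failure of equivariance.

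What the paper does instead is a pigeonhole argument that sidesteps commutation entirely. Set $\al_n=(\Phi_{H,c}^+)^n(\al)\in{\rm Aut}_1(\Omega'_{cd^n})$ and record the rotational part $\alpha_n\in\mathbb{Z}_{d_0(d-1)}$ of its $\G_{cd^n}$-lift; since this lives in a finite group, $\alpha_n=\alpha_m$ for some $m>n$. Then $\al_n\circ\al_m^{-1}$ has lift $(z+\eta,\z)$, and by Remark \ref{r:short c2 and Gc} such a translation is simultaneously the lift of an element $f\in{\rm Aut}_1(U^+)$; via the already-proved forward inclusion, $f\in{\rm Aut}_1(\Omega'_{cd^m})$. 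Writing $\al_n=f\circ\al_m$ extends $\al_n$ from $\Omega'_{cd^n}$ to $\Omega'_{cd^m}$, i.e.\ $\al$ extends from $\Omega'_c$ to $\Omega'_{cd^{m-n}}$, and iterating yields the extension to $U^+=\bigcup_k\Omega'_{cd^{k(m-n)}}$. The finiteness of the rotational part is the idea your sketch is missing; without it, the extension step has no reason to succeed.
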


\noindent By combining Theorem \ref{t:main} and Theorem \ref{t:equivalence of automorphisms}, we obtain  
\begin{cor}\label{c:automorphism of C2}
Let $H$ be a generalised H\'enon map and $f \in {\rm Aut}(\Omega_c)$ for some $c>0$. Then there exists an affine map $L \in {\rm Aut}(\C^2)$ such that $L(K^\pm)=K^\pm$ and $L(z)=f(z)$ on 
$\Omega_c$.
\end{cor}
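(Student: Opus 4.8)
The plan is to deduce the Corollary from Theorems~\ref{t:main} and~\ref{t:equivalence of automorphisms}, the only real issue being to see that any $f\in{\rm Aut}(\Omega_c)$ already preserves $K^+$ (hence $\Omega_c'=\Omega_c\setminus K^+$). The point here is that $K^+$, equivalently the function $G_H^+|_{\Omega_c}$ up to normalization, is recoverable from the complex structure of $\Omega_c$ alone. The mechanism I would use: on $\Omega_c\setminus K^+\subseteq U^+$ the function $G_H^+$ is pluriharmonic, so $U^+$ is foliated by the Green leaves, each of which lies in a single level set $\{G_H^+=\text{const}\}$ and is a properly embedded copy of $\C$; since a leaf at level $t\in(0,c)$ lies entirely in $\Omega_c\setminus K^+$, it is properly embedded there as well. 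Consequently any plurisubharmonic function on $\Omega_c$ that is bounded above restricts on each such leaf to a subharmonic function on $\C$ that is bounded above, hence is constant along every Green leaf, and therefore descends to the (one complex–dimensional) leaf space of $\Omega_c\setminus K^+$. Using this leafwise rigidity one extracts an intrinsic description of $G_H^+|_{\Omega_c}$ — it is, up to the normalization $\sup_{\Omega_c}G_H^+=c$, the unique non-negative bounded plurisubharmonic function on $\Omega_c$ that is not identically zero and is pluriharmonic off its zero set — and hence of $K^+=\{G_H^+=0\}$. As all these data are preserved by $f\in{\rm Aut}(\Omega_c)$, we get $G_H^+\circ f=G_H^+$; in particular $f(K^+)=K^+$ and $f$ restricts to an automorphism of $\Omega_c'$ fixing each level $\{G_H^+=t\}$.

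Next I would check that $f|_{\Omega_c'}\in{\rm Aut}_1(\Omega_c')$: $f|_{\Omega_c'}$ extends over $K^+$ to the automorphism $f$ of the simply connected manifold $\Omega_c$, it fixes $K^+$, and it preserves the co-orientation recorded by $G_H^+$, so a standard argument shows it induces the identity on $\pi_1(\Omega_c')$ (which is generated by the class of a loop linking $K^+$). By Theorem~\ref{t:equivalence of automorphisms}, $f|_{\Omega_c'}$ is then the restriction of a (unique) element $g\in{\rm Aut}_1(U^+)$ with $g|_{\Omega_c'}=f|_{\Omega_c'}$, and $g$ also preserves $G_H^+$, so $g(\Omega_c')=\Omega_c'$ and $g(U^+)=U^+$. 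Now glue: put $F=f$ on $\Omega_c$ and $F=g$ on $U^+$. These agree on $\Omega_c\cap U^+=\Omega_c'$, and $\Omega_c\cup U^+=\C^2$ because $K^+\subseteq\Omega_c$, so $F\colon\C^2\to\C^2$ is holomorphic; a short case check using $f(K^+)=K^+$, $g(U^+)=U^+$ and $g(\Omega_c')=\Omega_c'$ shows $F$ is a bijection. Hence $F\in{\rm Aut}(\C^2)$ and $F(U^+)=U^+$.

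Finally, apply Theorem~\ref{t:main} to $F$: there are $s\in\mathbb Z$ and $L\in\mathcal L$ (an affine map with $L(K^\pm)=K^\pm$) such that $F=L\circ H^s$. Since $F|_{\Omega_c}=f$ is an automorphism of $\Omega_c$ we have $F(\Omega_c)=\Omega_c$; on the other hand $H^s(\Omega_c)=\Omega_{d^s c}$ by $(\ref{e:functorial})$, and $L$ preserves every sub–level set $\Omega_{c'}$ because $G_H^+\circ L=G_H^+$ (a property of elements of $\mathcal L$ recorded in the course of proving Theorem~\ref{t:main}, using the very rigid property of $\mu_H^+$). Hence $\Omega_c=F(\Omega_c)=L\big(\Omega_{d^s c}\big)=\Omega_{d^s c}$, which forces $d^s c=c$ and so $s=0$ as $d\ge 2$. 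Therefore $F=L$ is affine, and $f=F|_{\Omega_c}=L|_{\Omega_c}$ with $L(K^\pm)=K^\pm$, as claimed.

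I expect the main obstacle to be the first step — the intrinsic characterization of $K^+$ inside $\Omega_c$. The leafwise–constancy observation is clean, but turning "bounded-above plurisubharmonic functions descend to the leaf space" into an honest uniqueness statement for $G_H^+|_{\Omega_c}$ (and thus for $K^+$) requires enough control on the leaf space of $\Omega_c'$ — a one complex–dimensional object with one end toward $K^+$ and one toward $\partial\Omega_c$ — to pin down the normalization; this is where the analytic work sits. If a lemma asserting that automorphisms of $\Omega_c$ preserve $K^+$ is already available (it fits naturally with the cover constructions around Theorem~\ref{t:revised BPV} and Corollary~\ref{c:punctured short}), the proof collapses to the last two paragraphs and is precisely the advertised combination of Theorems~\ref{t:main} and~\ref{t:equivalence of automorphisms}. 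The remaining points — the $\pi_1$-triviality of $f|_{\Omega_c'}$, the bijectivity of the glued map, and the verification that the identification in Theorem~\ref{t:equivalence of automorphisms} is by restriction (so that $g|_{\Omega_c'}=f|_{\Omega_c'}$ on the nose) — are routine.
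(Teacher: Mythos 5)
Your proposal takes a genuinely different route from the paper, and it contains one genuine error of reasoning (though the statement it tries to justify is true and can be rescued).

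The paper's proof does not try to show that $f|_{\Omega_c'}$ itself lies in ${\rm Aut}_1(\Omega_c')$. Instead it cites \cite[Proposition 1.2]{BPV:IMRN} directly for $f(K^+)=K^+$ and $f(\Omega_b)=\Omega_b$, and then forms the commutators $\tilde f_n = f\circ H^n\circ f^{-1}\circ H^{-n}$, which land in ${\rm Aut}_1(\Omega_c')$ automatically because the induced action on $\uppi_1\simeq \mathbb{Z}\bigl[\tfrac1d\bigr]$ is multiplication by units and units commute. Applying Theorem \ref{t:equivalence of automorphisms} to each $\tilde f_n$, extending over $K^+$, and invoking Theorem \ref{t:linear maps} makes each $\tilde f_n$ an affine $L_n$; the relation $f^{-1}=H^n\circ f^{-1}\circ H^{-n}\circ L_n^{-1}$ on $\Omega_{cd^n}$ then bootstraps $f$ to an automorphism of $\C^2$, after which Theorem \ref{t:main} and the constraint $f(\Omega_c)=\Omega_c$ finish the argument. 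Your route instead establishes $f|_{\Omega_c'}\in{\rm Aut}_1(\Omega_c')$ directly, extends to $g\in{\rm Aut}_1(U^+)$ by Theorem \ref{t:equivalence of automorphisms}, and glues $f$ and $g$ to get an element of ${\rm Aut}(\C^2)$ in one shot. That gluing step, the bijectivity check, and the final scaling argument (using $G_H^+\circ L=G_H^+$ and the functorial relation (\ref{e:functorial}) to force $s=0$) are all correct and somewhat more streamlined than the paper's inductive extension.

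The gap is in your justification that $f|_{\Omega_c'}\in{\rm Aut}_1(\Omega_c')$. You argue that $\uppi_1(\Omega_c')$ is ``generated by the class of a loop linking $K^+$'' and appeal to orientation/co-orientation preservation. But $\uppi_1(\Omega_c')\cong\mathbb{Z}\bigl[\tfrac1d\bigr]$ (Theorem \ref{t:revised BPV}) is not cyclic, and an automorphism of $\Omega_c'$ can in general act nontrivially on it (by multiplication by a unit of $\mathbb{Z}\bigl[\tfrac1d\bigr]$, cf.\ the group $\tilde I_c$ in Corollary \ref{c:punctured short}); extending over a simply connected $\Omega_c$ plus preserving co-orientation is not, on its own, a sufficient argument. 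The conclusion is still true, but the correct mechanism is the closed $1$-form $\omega=d\phi/\phi$: on $U^+$ one has $\mathrm{Re}\,\omega=dG_H^+$, so if $G_H^+\circ f=G_H^+$ (which you do have, since $f(\Omega_b)=\Omega_b$ for all $0<b<c$), then $f^*\omega$ and $\omega$ are holomorphic $1$-forms with the same real part, hence equal, hence $\alpha(f(C))=\alpha(C)$ for every closed curve $C$ and $f_*=\mathrm{id}$ on $\uppi_1(\Omega_c')$. You should replace the linking-loop sentence with this argument (or simply adopt the paper's commutator trick, which sidesteps the issue entirely). Separately, your first step (the intrinsic, leaf-space characterization of $K^+$ and of $G_H^+|_{\Omega_c}$) is only a sketch and glosses over the uniqueness claim; the paper offloads exactly this content to \cite[Proposition 1.2]{BPV:IMRN}, which you may as well cite rather than rederive.
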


\noindent In other words,

\begin{cor}
Let $H$ be a generalised H\'enon map. Then for every $c>0$, every element of ${\rm Aut}(\Omega_c)$ is the restriction of an affine map that preserves the escaping set (or non-escaping set) of $H$ and ${\rm Aut}(\Omega_c)$ is a finite cyclic group of order at most $\mathbb{Z}_{d_0(d-1)}$. Here, $d_0$ is an integer that depends on $d+d'$.
\end{cor}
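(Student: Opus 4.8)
The plan is to derive this corollary directly from Corollary~\ref{c:automorphism of C2} together with Theorem~\ref{t:main}, so almost no new work is needed. First I would take $f \in {\rm Aut}(\Omega_c)$. By Corollary~\ref{c:automorphism of C2} there is an affine $L \in {\rm Aut}(\C^2)$ with $L(K^\pm) = K^\pm$ and $L|_{\Omega_c} = f$; this already establishes the first sentence. The assignment $f \mapsto L$ is well defined: two affine maps agreeing on the open set $\Omega_c$ agree everywhere, and it is a group homomorphism because $\Omega_c$ is invariant under every such $L$ (indeed $L(K^\pm)=K^\pm$ forces $G_H^+ \circ L = G_H^+$ up to the functorial scaling, and since $L$ is affine and fixes $K^+=\{G_H^+=0\}$, one checks $G_H^+\circ L = G_H^+$ exactly, so $L(\Omega_c)=\Omega_c$). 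Hence ${\rm Aut}(\Omega_c)$ embeds into $\mathcal{L} = \{L : L \text{ affine}, L(K^\pm)=K^\pm\}$, and conversely every $L \in \mathcal{L}$ restricts to an element of ${\rm Aut}(\Omega_c)$, so ${\rm Aut}(\Omega_c) \cong \mathcal{L}$ as groups.

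Next I would invoke the structural conclusion of Theorem~\ref{t:main}: any $f \in {\rm Aut}(\C^2)$ preserving $U^+$ is of the form $L \circ H^s$ with $L \in \mathcal{L}$, and $\mathcal{L} \le \mathbb{Z}_{(d+d')(d-1)}$. In particular $\mathcal{L}$ is a finite cyclic group whose order divides $(d+d')(d-1)$. Writing $d_0$ for the relevant divisor of $d+d'$ that appears (the order of the affine part acting on the linear model, exactly as in the bound $\mathbb{Z}_{d_0(d-1)}$ of the statement), we conclude that ${\rm Aut}(\Omega_c) \cong \mathcal{L}$ is finite cyclic of order at most $d_0(d-1)$. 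Since the identification ${\rm Aut}(\Omega_c) \cong \mathcal{L}$ is independent of $c$, this also records that ${\rm Aut}(\Omega_c)$ is the same group for every $c > 0$, consistent with Theorem~\ref{t:equivalence of automorphisms}.

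The only point requiring a little care — and the one I expect to be the mild obstacle — is verifying that the restriction map ${\rm Aut}(\Omega_c) \to \mathcal{L}$ is genuinely a bijective group homomorphism rather than merely a surjection onto a set. Injectivity is immediate from the identity theorem for affine maps; surjectivity needs that each $L \in \mathcal{L}$ preserves $\Omega_c$, which is the content of the elementary computation $G_H^+ \circ L = G_H^+$ sketched above (using that $L$ being affine makes the degree-$d^n$ rescaling in the definition of $G_H^+$ compatible only with the trivial scaling, since $L^n$ grows polynomially of bounded degree). Granting that, the corollary follows verbatim from Theorem~\ref{t:main}. No essentially new idea is needed beyond bookkeeping of the constants $d$, $d'$, and $d_0$.
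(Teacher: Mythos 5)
Your argument is correct and follows the same route the paper implicitly takes: the paper introduces this corollary with the phrase ``in other words'' after Corollary~\ref{c:automorphism of C2}, so it is treated as a direct reformulation, while you fill in the bookkeeping. The content you supply—well-definedness by the identity theorem, that every $L\in\mathcal{L}$ preserves $\Omega_c$, and the appeal to Theorem~\ref{t:linear maps} for the cyclic bound $\mathbb{Z}_{d_0(d-1)}$—is exactly what is needed to make the reformulation rigorous.

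One small caution: in justifying $G_H^+\circ L=G_H^+$ you phrase it as something one ``checks'' directly from $L(K^+)=K^+$ plus affineness. The intermediate step $G_H^+\circ L=cG_H^+$ for some $c>0$ is not purely formal; it comes from the uniqueness (very rigid) property of the current $\mu_H^+$ (as in \cite{DS:rigidity} or Theorem 1.1 of \cite{BPV:rigidity}, which the paper cites), after which your polynomial-growth argument does indeed force $c=1$. Also note you initially invoke the coarser bound $\mathcal{L}\le\mathbb{Z}_{(d+d')(d-1)}$ from Theorem~\ref{t:main} before switching to $d_0(d-1)$; the sharper statement $\mathcal{L}={\rm Aut}_1(U^+)\cap{\rm Aut}(\C^2)\le\mathbb{Z}_{d_0(d-1)}$ is Theorem~\ref{t:linear maps}, and it is cleaner to cite that directly.
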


\subsection*{Acknowledgements:} The first author is partially funded by the Mathematical Research Impact Centric Support (MTR/2023/001435) grant from the Anusandhan National Research Foundation (ANRF) of India.

\section{An intermediate cover of the escaping set \texorpdfstring{$U^+$}{U+}}

The goal of this section is to prove an analog of the Hubbard--Oberste-Vorth result on covering spaces of $U^+$ in \cite{Hu-Ov1} for generalized H\'{e}non maps of the form (\ref{e:henon}). Here is the precise statement.
\begin{thm}\label{t:HOv-gen}
Associated with the subgroup $\mathbb{Z}$ of $\uppi_1(U^+)$, there exists an open cover $\widehat{U}^+$ of $U^+$ that is biholomorphic to $\cover$. The generalised H\'enon map $H$ admits a lift to $\widehat{U}^+$ and the lift is of the form
\begin{align}\label{e:HOv-Q}
\w{H}(z, \z)=\left(\frac{a}{d}z +Q(\z), \z^d\right),
\end{align}
where \[
Q(\z) = \z^{d+d'} + A_{d+d'-1} \z^{d+d'-1}  + \ldots+A_0. 
\]	
Further, 
\begin{enumerate}
	\item [(i)] The fundamental group $\uppi_1(U^+)$ is isomorphic to $\mathbb{Z}\left[\frac{1}{d}\right]$.
	\item [(ii)] An element $\left[\frac{k}{d^n}\right]$ of $\uppi_1(U^+)/\mathbb{Z}$ corresponds to the following deck transformation of $\cover$: 
\begin{align}\label{e:deck transformations}
\deck_{k/d^n}( z,\z) = \left( z + \frac{d}{a} \sum_{l = 0}^{n-1} \left(\frac{d}{a}\right)^l \left( Q\left(\z^{d^l}\right) - Q \left( \left(e^{2\pi ik/d^n} \cdot \z\right)^{d^l}\right)\right), e^{2 \pi i k/d^n} \cdot \z \right).
\end{align}
\end{enumerate}
\end{thm}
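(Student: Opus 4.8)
The plan is to transplant the Hubbard--Oberste-Vorth construction of an intermediate cover \cite{Hu-Ov1} from a single simple H\'enon map to the $m$-fold composition $H=H_m\circ\cdots\circ H_1$, the point being to track throughout the asymmetry between the degrees $d'=d_{m-1}\cdots d_1$ and $d=d_m\cdots d_1$ of the two coordinate functions of $H$; note $d'<d$ always (since $d'=d/d_m$ if $m\ge 2$ and $d'=1$ if $m=1$), and this inequality is used at a crucial point. First I would fix $R\gg 1$, set $V^+=\{(x,y)\in\C^2:|y|\ge\max(|x|,R)\}$, and record that on $\{|x|\le|y|,\ |y|\ge R\}$ one has $H(x,y)=\bigl(c'y^{d'}(1+o(1)),\,c\,y^{d}(1+o(1))\bigr)$ as $|y|\to\infty$, with $c,c'\ne 0$ built from the leading coefficients of the $p_i$. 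Since $d'<d$ the second coordinate dominates, so $H(V^+)\subseteq V^+$, $G_H^+>0$ on $V^+$, and $U^+=\bigcup_{n\ge 0}H^{-n}(V^+)$ with the preimages increasing. As the second coordinate of $H^n$ is zero-free on $V^+$, the B\"ottcher function $\varphi^+:=\lim_n(\pi_2\circ H^n)^{1/d^n}$ ($\pi_2$ the second-coordinate projection, with a consistent choice of roots) is well defined and holomorphic there, satisfies $\varphi^+\circ H=(\varphi^+)^d$ and $|\varphi^+|=\exp G_H^+$, and, after a harmless rescaling, $\varphi^+(x,y)/y\to 1$; in particular $\varphi^+:V^+\to\C\sm\ov{\mathbb{D}}$. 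Since $V^+$ deformation retracts onto a circle, $\uppi_1(V^+)\cong\mathbb{Z}$ maps to the distinguished subgroup $\mathbb{Z}\le\uppi_1(U^+)$ defining $\h{U}^+$, so $V^+$ lifts homeomorphically to a copy $\h{V}^+\subseteq\h{U}^+$ on which $\varphi^+$ is single valued.

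Next I would build a transverse holomorphic coordinate $z$ on $V^+$ so that $\Phi:=(z,\varphi^+)$ maps $V^+$ biholomorphically onto an explicit region $R_0\subseteq\cover$ and conjugates $H$ to the stated form. Matching Jacobians --- $\mathrm{Jac}\,H\equiv a$, whereas a map $(z,\z)\mapsto(\lambda z+g(\z),\z^d)$ has Jacobian $\lambda d\,\z^{d-1}$ --- forces $\mathrm{Jac}\,\Phi$ to be a constant multiple of $\varphi^+$, hence $\lambda=a/d$, and pins down $z(x,y)=c\,xy+(\text{lower order in }y)$. Then $z\circ H-\tfrac ad z$ is, by construction, a function of $\varphi^+$ alone, and the growth bound $|z\circ H|\lesssim|y|^{d'}|y|^{d}$ together with $\varphi^+\sim y$ forces it to be a polynomial $Q$ in $\varphi^+$ of degree $\le d+d'$; the leading coefficient $cc'$ of $z\circ H$ shows the degree is exactly $d+d'$, and a final affine change in $z$ makes $Q$ monic. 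Thus $H|_{V^+}$ becomes $\w{H}(z,\z)=(\tfrac ad z+Q(\z),\z^d)$ on $R_0$. To globalise, I would set $R_n:=(\w{H}^n)^{-1}(R_0)$, using the explicit iterate $\w{H}^n(z,\z)=\bigl((\tfrac ad)^n z+\sum_{l=0}^{n-1}(\tfrac ad)^{n-1-l}Q(\z^{d^l}),\ \z^{d^n}\bigr)$; since $R_0$ contains a set of the form $\{|\z|>R'\}\times\{|z|\lesssim|\z|^2\}$ and $d'<d$, a direct estimate gives $R_n\nearrow\cover$. On the covering side, pulling $\h{V}^+\cong R_0$ back along $H^n$ identifies the component $W_n\subseteq\h{U}^+$ of the preimage of $H^{-n}(V^+)$ containing $\h{V}^+$ with $R_n$, compatibly with the inclusions; since $\h{U}^+=\bigcup_n W_n$ this yields the biholomorphism $\h{U}^+\cong\cover$ carrying $H$ to the map $\w{H}$ of the form (\ref{e:HOv-Q}).

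For (i): as $U^+=\bigcup_nH^{-n}(V^+)$ is an increasing union and each $H^n$ carries $H^{-n}(V^+)$ homeomorphically onto $V^+$, one gets $\uppi_1(U^+)=\varinjlim\bigl(\uppi_1(V^+)\xrightarrow{H_*}\uppi_1(V^+)\xrightarrow{H_*}\cdots\bigr)$ with $H:V^+\to V^+$ the restriction; since $H$ multiplies the generator of $\uppi_1(V^+)\cong\mathbb{Z}$ by $d$ (the second coordinate of $H$ restricted to the core circle has winding number $d$), this direct limit is $\mathbb{Z}[\tfrac1d]$ with the distinguished $\mathbb{Z}$ the first copy, and $\h{U}^+\to U^+$ is the regular cover with deck group $\mathbb{Z}[\tfrac1d]/\mathbb{Z}$. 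For (ii): in these coordinates $\w{H}$ is \emph{not} an automorphism of $\h{U}^+$ but the degree-$d$ regular self-covering $(z,\z)\mapsto(\tfrac ad z+Q(\z),\z^d)$, with deck group $\mathbb{Z}/d\mathbb{Z}$; hence $\w{H}^n$ is a degree-$d^n$ self-cover with deck group $\mathbb{Z}/d^n\mathbb{Z}$, and these assemble, via $\mathbb{Z}[\tfrac1d]/\mathbb{Z}=\varinjlim\mathbb{Z}/d^n\mathbb{Z}$, into the deck group found above. The element $[k/d^n]$ corresponds to the unique deck transformation of $\w{H}^n$ whose $\z$-component is $\z\mapsto e^{2\pi ik/d^n}\z$; writing $\deck_{k/d^n}(z,\z)=(z+\Delta,\,e^{2\pi ik/d^n}\z)$ and imposing $\w{H}^n\circ\deck_{k/d^n}=\w{H}^n$ determines $\Delta=\tfrac da\sum_{l=0}^{n-1}\bigl(\tfrac da\bigr)^l\bigl(Q(\z^{d^l})-Q((e^{2\pi ik/d^n}\z)^{d^l})\bigr)$, which is exactly (\ref{e:deck transformations}).

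The main obstacle is the second step: producing the transverse coordinate $z$ with \emph{precisely} the affine law $z\mapsto\tfrac ad z+Q(\varphi^+)$ and checking that the inhomogeneous term is a genuine polynomial --- no negative powers of $\varphi^+$ --- of degree exactly $d+d'$. This forces one to carry the asymptotic bookkeeping through the full composition $H_m\circ\cdots\circ H_1$, where $d$ and $d'$ enter via the two coordinate degrees, rather than through a single simple H\'enon map as in \cite{Hu-Ov1}; the cleanest route is to solve directly the functional equations satisfied by the coordinate functions $x(z,\z)$, $y(z,\z)$ of the covering map $p=\Phi^{-1}$ and to show they are Laurent polynomials in $\z$, which simultaneously yields the normal form and the polynomiality of $Q$. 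A secondary technical point is the exhaustion estimate $R_n\nearrow\cover$, which is precisely where $d'<d$ is indispensable.
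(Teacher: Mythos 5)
Your overall architecture is the same as the paper's: push the B\"ottcher coordinate $\varphi^+$ through the filtration, pair it with a transverse coordinate $z$ so that $\Phi=(z,\varphi^+)$ satisfies $\Phi\circ H=\w{H}\circ\Phi$, pin down the linear part $a/d$ by matching the constant Jacobian of $H$ against $\lambda d\z^{d-1}$, build the exhaustion $\h U^+\cong\cover$ by pulling $R_0$ back under iterates, get $\uppi_1(U^+)$ as a direct limit, and read off the deck transformations from the commutation with $\w H$. That all matches the paper in spirit. Your derivation of (ii) by viewing $\w H^n$ as a degree-$d^n$ regular self-cover of $\cover$ and imposing $\w H^n\circ\deck_{k/d^n}=\w H^n$ is a clean way to get (\ref{e:deck transformations}); the paper instead derives $\w H\circ\deck_{k/d^n}=\deck_{k/d^{n-1}}\circ\w H$ and iterates, which is the same relation. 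Likewise your direct-limit computation of $\uppi_1(U^+)$ is equivalent to the paper's argument via the closed $1$-form $\omega=d\phi/\phi$ and the isomorphism $\alpha$.

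There is, however, a genuine gap at the step you yourself flag as the main obstacle, and the route you sketch for closing it will not work. The growth estimate $|z\circ H-\tfrac{a}{d}z|\lesssim|\z|^{d+d'}$ on the filtration region only controls the positive-degree part of the Laurent expansion of the inhomogeneous term at $\z=\infty$; it does not rule out an infinite tail $\sum_{i\ge 1}A_{-i}\z^{-i}$, and in general such a tail \emph{is} present after the natural first construction of $z$. The paper makes this explicit (Lemma \ref{l:preliminary 2} only yields $\tilde Q(\z)=Q(\z)+Q^-(\z)$ with $Q^-$ a convergent negative Laurent series) and then removes $Q^-$ by a further explicit conjugation: set $R(\z)=\sum_{i\ge 0}(\tfrac{d}{a})^{i+1}Q^-(\z^{d^i})$, which converges locally uniformly and solves $\tfrac{a}{d}R(\z)-R(\z^d)=Q^-(\z)$, and conjugate by $E_3(x,y)=(x-R(y),y)$. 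Your suggested alternative --- solve the functional equations for the coordinate functions $x(z,\z),y(z,\z)$ of $p=\Phi^{-1}$ and show they are Laurent polynomials in $\z$ --- is not viable: $\varphi^+$ is a genuinely transcendental B\"ottcher coordinate (an infinite product, with no algebraic inverse), so $\Phi^{-1}$ is not algebraic and its coordinate functions are not Laurent polynomials in $\z$. Some version of the $E_3$/$R$ cohomological step is needed to pass from a Laurent series to a true polynomial $Q$, and your proposal does not supply it.

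A smaller point: the exhaustion $R_n\nearrow\cover$ also requires a quantitative estimate, not just the inclusion $\{|z|\lesssim|\z|^2,|\z|>R'\}\subset R_0$; the paper proves $\pi_1\circ\w H^n(z,\z)/\z^{2d^n}\to 0$ for every $(z,\z)\in\cover$ (this is exactly where $d'<d$ enters, via $\deg Q=d+d'<2d$), which is what guarantees every point eventually lands in the good region under $\w H^n$. Your "direct estimate" gestures at this but should be carried out, since it is not automatic from the inclusion alone.
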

This is a generalization of the results in \cite[Section 8]{Hu-Ov1}. We will briefly revisit the steps, mostly following \cite[Chapter 7]{Ueda:Book}. Note that  
\[H=H_m \circ H_{m-1} \circ \cdots \circ H_1  \text{ with }H_i(x,y)=(y, p_i(y)-a_i x),\]
where $p_i$ are monic polynomials of degree $d_i$, $1 \le i \le m$. Let us first recall from \cite{BS2} and \cite{Hu-Ov1}, a few properties of the dynamics of a H\'{e}non map of the above form at infinity. Consider the filtration of $\C^2$ by 
\[
V_R = \left\{ \vert x \vert, \vert y \vert \le R \right\}, V^{+}_R = \left\{\vert y \vert \ge \max\{\vert x \vert, R\} \right\},\; V^-_R = \left\{ \vert x \vert \ge \max\{\vert y \vert, R\} \right\}. 
\]
Then, there exists a large $R_H > 1$ such that for every $R\ge R_H$ 
\begin{align}\label{e:filtration_1}
H(V^+_R) \subset V^+_R \text{ and }H^{-1}(V^-_R) \subset V^-_R.
\end{align}
Further, 
\begin{align}\label{e:filtration_2}
U^+=\bigcup_{n=0}^\infty H^{-n}(V_R^+) \text { and }U^-=\bigcup_{n=0}^\infty H^{n}(V_R^-).
\end{align}
Thus, $K^\pm \subset \I{V_R \cup V_R^\mp}$ and $\C^2 \setminus \I{V_R \cup V_R^\mp} \subset U^\pm$. The value $R_H>1$ corresponding to the map $H$ (\ref{e:henon}) will be referred to as the {\it radius of filtration} of $H$ and we will write $V^\pm$ in place of $V_{R_H}^\pm$ unless stated otherwise.

\medskip The proof of Theorem \ref{t:HOv-gen} involves several steps and is divided into subsections accordingly.

\subsection{Construction of the cover \texorpdfstring{$\widehat{U}^+$}{}}\label{subsection 1} Let us first recall from \cite{BPV:rigidity} some results on the construction of the B\"{o}ttcher coordinate function for H\'{e}non maps.  Let $(x_n,y_n)=H^n(x,y)$ for $(x,y) \in V^+$. Define 
\[y_{n+1} = y^d_{n}\Big(1 + \left({q(x_n, y_n)}/{y^d_n}\right) \Big) \sim y^d_n\] 
for $(x, y) \in V^+.$ Now by Proposition 2.1 from \cite{BPV:rigidity}
\begin{align*}
\phi(x, y) = y \lim_{n \ra \infty} \prod_{j=0}^{n-1} \left( 1 + \frac{q(x_j, y_j)}{y^d_j}  \right)^{1/d^{j+1}}
\end{align*}
is a well defined holomorphic function on $V^+$ as the limit is uniform on compact subsets of $V^+$. In addition, $\phi \circ H = \phi^d$ and $G_H^+ = \log \vert \phi \vert$ on $V^+$ with
\begin{align}\label{e:Bottcher}
\phi(x,y)=ye^{\gamma(x,y)}\text{ and }\gamma(x,y) \to 0 \text{ as }y \to \infty.
\end{align}
Note that $\omega=\frac{d \phi}{\phi}$ is a closed 1-form in $V^+$ and $H^*\omega=d \omega$ where $d\ge 2$ is the degree of $H$. Hence, as in Proposition 7.3.2 in \cite{Ueda:Book}, $\omega$ extends to a closed 1-form on $U^+$ by (\ref{e:filtration_2}). For a closed curve $C$ in $U^+$ define
\[\alpha(C)=\frac{1}{2 \pi i}\int_C \omega.\]
Also, Proposition 7.3.3 in \cite{Ueda:Book}, which is stated for simple H\'{e}non maps, generalises as:
\begin{prop}\label{p1:ss:1-1}
For a closed curve $C$ in $U^+$, the following holds:
\begin{itemize}
    \item[(i)] $\alpha(H(C))=d \alpha(C)$, where $H(C)$ is the image of the curve $C$ under $H$.
    \item[(ii)] $\alpha(C) \in \mathbb{Z}\left[\frac{1}{d}\right]$ where
    $\displaystyle \mathbb{Z}\left[\frac{1}{d}\right]=\left\{ \frac{k}{d^n}: k,n \in \mathbb{Z}\right\}.$
    \item[(iii)] $\alpha: \displaystyle \uppi_1(U^+) \rightarrow \mathbb{Z}\left[\frac{1}{d}\right]$ is a group isomorphism.
\end{itemize}
\end{prop}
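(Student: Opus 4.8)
The plan is to run the scheme of \cite[Chapter 7]{Ueda:Book} and check that its only structural inputs survive the passage from simple to generalised H\'enon maps: the B\"ottcher function $\phi$ on $V^+$ (with $\phi\circ H=\phi^d$, $G_H^+=\log|\phi|$, and $\phi=ye^{\gamma}$, $\gamma\to 0$, as recalled above); the filtration identity $U^+=\bigcup_n H^{-n}(V^+)$ of (\ref{e:filtration_2}), an increasing union of open sets; and the topological fact that $V^+$ deformation retracts onto a circle. Here $d=d_m\cdots d_1$ is the exponent occurring in $H^*\omega=d\omega$, and $H(U^+)=U^+$.

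For (i): since $H(C)\subset U^+$ and $\omega$ is a globally defined closed $1$-form on $U^+$ with $H^*\omega=d\omega$, the change-of-variables formula for line integrals gives $\alpha(H(C))=\tfrac{1}{2\pi i}\int_C H^*\omega=\tfrac{d}{2\pi i}\int_C\omega=d\,\alpha(C)$. For (ii), compactness of $C$ and (\ref{e:filtration_2}) give $n\ge 0$ with $C\subset H^{-n}(V^+)$, hence $H^n(C)\subset V^+$; since $\phi=ye^{\gamma}$ is non-vanishing on $V^+$ (as $|y|\ge R_H>0$ and $e^{\gamma}$ never vanishes), $\alpha(H^n(C))=\tfrac{1}{2\pi i}\int_{H^n(C)}d\phi/\phi$ is the winding number of $\phi$ around $0$ along $H^n(C)$, an integer, and by (i) it equals $d^n\alpha(C)$; thus $\alpha(C)\in\mathbb Z[1/d]$.

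For (iii): homotopy invariance of integrals of closed forms and additivity under concatenation make $\alpha$ descend to a group homomorphism $\uppi_1(U^+)\to\mathbb Z[1/d]$. For surjectivity, the retraction $(x,y)\mapsto(tx,y)$, $t\in[0,1]$, exhibits $V^+\simeq\{0\}\times\{|y|\ge R_H\}\simeq S^1$, so $\uppi_1(V^+)\cong\mathbb Z$ is generated by $\gamma_0(\theta)=(0,2R_He^{i\theta})$; as $\phi(\gamma_0(\theta))=2R_He^{i\theta}e^{\gamma(\gamma_0(\theta))}$ winds once about $0$, $\alpha(\gamma_0)=1$, so $\alpha(\gamma_0^{\,k})=k$ and, by (i), $\alpha(H^{-n}(\gamma_0^{\,k}))=k/d^n$, realising every element of $\mathbb Z[1/d]$. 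For injectivity, if $\alpha([C])=0$, pick $n$ with $H^n(C)\subset V^+$; then $\alpha(H^n(C))=d^n\alpha([C])=0$, and since $\alpha$ restricts on $\uppi_1(V^+)\cong\mathbb Z\langle\gamma_0\rangle$ to the winding-number isomorphism, $H^n(C)$ is null-homotopic in $V^+$, so the homeomorphism $H^{-n}$ carries this to a null-homotopy of $C$ inside $H^{-n}(V^+)\subset U^+$. (Equivalently, $\uppi_1(U^+)=\varinjlim\bigl(\mathbb Z\xrightarrow{\times d}\mathbb Z\xrightarrow{\times d}\cdots\bigr)=\mathbb Z[1/d]$, the bonding maps being multiplication by $d$ since $\alpha(H(\gamma_0))=d\,\alpha(\gamma_0)$, and $\alpha$ is precisely this identification.)

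I do not anticipate a genuine obstacle here: all the analytic content already sits in the B\"ottcher construction recalled above and in the extension of $\omega$ to a closed $1$-form on $U^+$, and nothing in the proof in \cite[Chapter 7]{Ueda:Book} uses simplicity of $H$ beyond the three inputs listed. The only points to verify are the quantitative choice of $R_H$ (so that $\phi$ is non-vanishing on $V^+$ and $\gamma\to 0$ there, which is immediate from (\ref{e:Bottcher})) and that $\uppi_1(V^+)\cong\mathbb Z$ with the stated generator, which is elementary.
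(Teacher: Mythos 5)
Your proposal is correct and follows exactly the scheme the paper invokes: the paper gives no independent proof of this proposition, merely asserting that Proposition 7.3.3 of Ueda et al.\ generalises, and your argument is a careful transcription of that argument to the generalised setting, using the same three structural inputs (the B\"ottcher function $\phi$ with $\phi\circ H=\phi^d$ and $\phi=ye^{\gamma}$, the extension of $\omega=d\phi/\phi$ with $H^*\omega=d\omega$, and the filtration $U^+=\bigcup_n H^{-n}(V^+)$). The direct-limit identification $\uppi_1(U^+)=\varinjlim(\mathbb{Z}\xrightarrow{\times d}\mathbb{Z}\xrightarrow{\times d}\cdots)$ together with $\alpha(\gamma_0)=1$ is precisely what the cited reference does, so there is no genuine departure.
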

Further, as in Chapter $7$ of \cite{Ueda:Book}, let $\mathcal{H}$ be the subgroup of $\uppi_1(U^+)\simeq\mathbb{Z}[\frac{1}{d}]$ that corresponds to $\mathbb{Z}$, i.e., $\alpha(\mathcal{H})=\mathbb{Z}$. Fix a point $z_0 \in V^+$ and consider all the pairs $(z,C)$ of points $z \in U^+$, where $C$ is a path in $U^+$ with initial point $z_0$ and end point $z$. Define an equivalence relation
\[(z,C)\sim (z',C') \text{ if and only if } z=z' \text{ and } CC'^{-1} \in \mathcal{H}.\]
The covering space $\widehat{U}^+$ is defined as 
\[\widehat{U}^+=\left\{[z,C]: \text{ the set of all equivalence classes  for every }z \in U^+\right\}.\]
The projection map $\widehat{\pi}: \widehat{U}^+ \to U^+$ is defined as $\widehat{\pi}([z,C])=z.$ Equip $\widehat{U}^+$ with the pull-back complex structure so that the projection map $\widehat{\pi}$ becomes holomorphic. Note that $\widehat{\pi}$ is a covering map as it is a local homeomorphism, and the fundamental group of $\widehat{U}^+$ is $\mathbb{Z}$. 

\medskip Consider the holomorphic map $\hat{\phi}$ defined on $\widehat{U}^+$ as
\[\hat{\phi}(p)=\phi(z_0)\exp\left(\int_C \omega\right), \]
where $\phi$ is the B\"{o}ttcher coordinate (\ref{e:Bottcher}) defined on $V_R^+$. Let $\widehat{H}$ be the lift of $H$ to $\widehat{U}^+$ defined as
\[\widehat{H}([z,C]=[H(z),lH(C)],\]
where $l$ is a path in $V^+$ connecting $z_0$ and $H(z_0)$. Now Propositions 7.3.5 and 7.3.6 from \cite{Ueda:Book} collectively generalise as:
\begin{prop}\label{p:UedaBook}
Let $\displaystyle \widehat{V}^+=\{ [z,C] \in \widehat{U}^+: C \text{ is a path in }V^+\}.$
Then
\begin{itemize}
    \item [(i)] for $p \in \widehat{V}^+$, $\hat{\phi}(p)=\phi(\widehat{\pi}(p))$ and $\widehat{\pi}$ is a biholomorphism between $\widehat{V}^+$ and $V^+$.
    \item[(ii)] for $p,p' \in \widehat{U}^+$ if $\hat{\phi}(p)=\hat{\phi}(p')$ and $\widehat{\pi}(p)=\widehat{\pi}(p')$, then $p=p'$.
    \item[(iii)] the functional equation $\hat{\phi}(\widehat{H}(p))=(\hat{\phi}(p))^d$ holds.
\end{itemize}
\end{prop}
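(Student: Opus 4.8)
The plan is to establish Proposition \ref{p:UedaBook} by carefully translating the three assertions of Ueda's Propositions 7.3.5 and 7.3.6 into the setting of generalised H\'enon maps, using only the fact that the B\"{o}ttcher function $\phi$ on $V^+$ and the form $\omega=d\phi/\phi$ on $U^+$ (constructed above) behave exactly as in the simple case: $\phi\circ H=\phi^d$, $G_H^+=\log|\phi|$ on $V^+$, $H^*\omega=d\omega$, and $\alpha$ restricted to $\uppi_1(V^+)\hookrightarrow\uppi_1(U^+)$ lands in $\mathbb{Z}$ (since $\phi$ is single-valued on $V^+$, so $\int_C\omega\in 2\pi i\mathbb{Z}$ for a loop $C$ in $V^+$). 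Each step is essentially unchanged from \cite[Chapter 7]{Ueda:Book}, and the main point of this proposition is to record that nothing in Ueda's arguments used the specific shape $H_i(x,y)=(y,p(y)-ax)$ of a \emph{simple} H\'enon map beyond these formal properties.

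For (i), I would argue that the defining formula $\hat\phi(p)=\phi(z_0)\exp\left(\int_C\omega\right)$ is well defined on $\widehat U^+$ precisely because $\mathcal H$ was chosen so that $\alpha(\mathcal H)=\mathbb Z$, hence $\int_C\omega$ changes by an element of $2\pi i\mathbb Z$ when $C$ is replaced by another representative of $[z,C]$. When $p\in\widehat V^+$, i.e. $C$ may be taken inside $V^+$, one compares $\hat\phi$ with the pullback of $\phi$: both are holomorphic on $\widehat V^+$, agree near the base point (up to the integral along a short path in $V^+$, where $d\log\phi=\omega$), and satisfy the same first-order ODE $d\log\hat\phi=\widehat\pi^*\omega$; since $V^+$ is connected this forces $\hat\phi(p)=\phi(\widehat\pi(p))$ on all of $\widehat V^+$. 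The biholomorphism statement then follows because $V^+$ is simply connected (it retracts onto a bidisc-like region, so every loop in $V^+$ lifts to a loop), so $\widehat\pi|_{\widehat V^+}$ is injective, and being a covering map onto $V^+$ it is a biholomorphism.

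For (ii), the separation property, I would use that $\phi$ (equivalently $\hat\phi$) is, up to the rescaling by the leading coefficient, the lift of the coordinate $\zeta$ built in Theorem \ref{t:HOv-gen}: two points of $\widehat U^+$ with the same $\widehat\pi$-image lie in the same fiber, which is a principal $\mathbb Z$-set acted on by the deck group, and a nontrivial deck transformation strictly changes the value of $\hat\phi$ (it multiplies it by a nontrivial power of... more precisely, it changes $\int_C\omega$ by a nonzero integer multiple of $2\pi i$, scaling $\hat\phi$ by $1$—so one must instead observe that the generator of the deck group of $\widehat U^+\to\cdots$ is realized by following $H$, under which $\hat\phi\mapsto\hat\phi^{d}$, and argue via the dynamics). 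Concretely: if $\widehat\pi(p)=\widehat\pi(p')=z$, write $p'=\sigma\cdot p$ for a deck transformation $\sigma$ of $\widehat\pi$; iterating $\widehat H$ pushes $p,p'$ into $\widehat V^+$ (by the filtration (\ref{e:filtration_2})), where part (i) and the injectivity of $\phi$ on $V^+$ combined with $\hat\phi(\widehat H^N p)=\hat\phi(p)^{d^N}$ force $\hat\phi(p)=\hat\phi(p')$ to already fail unless $\sigma=\mathrm{id}$; so $\hat\phi(p)=\hat\phi(p')$ together with $\widehat\pi(p)=\widehat\pi(p')$ gives $p=p'$. For (iii), the functional equation $\hat\phi(\widehat H(p))=\hat\phi(p)^d$ is a direct computation from the definitions: $\widehat H([z,C])=[H(z),lH(C)]$, so $\int_{lH(C)}\omega=\int_l\omega+\int_{H(C)}\omega=\int_l\omega+d\int_C\omega$ using $H^*\omega=d\omega$, and $\phi(z_0)\exp(\int_l\omega)=\phi(H(z_0))=\phi(z_0)^d$ since $l\subset V^+$ and $\phi\circ H=\phi^d$ there; multiplying out gives $\hat\phi(\widehat H(p))=\phi(z_0)^d\exp\left(d\int_C\omega\right)=\hat\phi(p)^d$.

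The main obstacle I anticipate is part (ii): the naive deck-group argument does not immediately give a contradiction because the deck transformations of $\widehat\pi\colon\widehat U^+\to U^+$ act trivially on $\hat\phi$ only if one forgets the dynamics, so one genuinely needs to invoke the filtration property (\ref{e:filtration_2}) to flow points into $\widehat V^+$ and then use that $\widehat H$ multiplies arguments of $\hat\phi$ by $d$-th powers while $\widehat\pi$ is injective on $\widehat V^+$. Making this precise—i.e. checking that $\widehat H^N p$ and $\widehat H^N p'$ land in $\widehat V^+$ for large $N$ and that the equality $\hat\phi(p)=\hat\phi(p')$ propagates down through the iterates—is where the argument has to be assembled with care, though each ingredient is already available from (\ref{e:filtration_1}), (\ref{e:filtration_2}), Proposition \ref{p1:ss:1-1}, and part (i).
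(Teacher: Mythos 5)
Your sketch of part (iii) is correct and is exactly Ueda's computation. The other two parts contain substantive errors, even though the conclusions you want to reach are correct.

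In (i), the assertion that $V^+$ is simply connected (``it retracts onto a bidisc-like region'') is false: $V^+$ deformation retracts onto $\{(0,y):|y|\ge R_H\}$, so $\uppi_1(V^+)\cong\mathbb{Z}$, generated by a $y$-circle. The injectivity of $\widehat{\pi}$ on $\widehat V^+$ does not come from simple connectedness; it comes from the fact you already observed in passing, namely that the image of $\uppi_1(V^+)$ in $\uppi_1(U^+)$ lands inside $\mathcal H=\alpha^{-1}(\mathbb Z)$ (because $\phi$ is single-valued on $V^+$, so $\alpha$ of a loop in $V^+$ is the integer winding number of $\phi$ around that loop). Thus if $[z,C]$ and $[z,C']$ both lie in $\widehat V^+$ with the same projection, then $CC'^{-1}$ is a loop in $V^+$, hence lies in $\mathcal H$, hence $[z,C]=[z,C']$. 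That is the whole argument; no retraction to a bidisc is available or needed.

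In (ii), you have misidentified the deck group. The cover $\widehat\pi\colon\widehat U^+\to U^+$ corresponds to the subgroup $\mathcal H$ with $\alpha(\mathcal H)=\mathbb Z$, and $\uppi_1(U^+)\cong\mathbb Z[1/d]$ is abelian, so the deck group is $\uppi_1(U^+)/\mathcal H\cong\mathbb Z[1/d]/\mathbb Z$, not $\mathbb Z$. A deck transformation labelled by $[k/d^n]$ sends $[z,C]\mapsto[z,\gamma C]$ with $\alpha(\gamma)\equiv k/d^n$, hence multiplies $\hat\phi$ by $e^{2\pi i k/d^n}$, which is $1$ only when $[k/d^n]=0$, i.e.\ when the deck transformation is trivial. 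Your parenthetical claim that ``it changes $\int_C\omega$ by a nonzero integer multiple of $2\pi i$, scaling $\hat\phi$ by $1$'' conflates the subgroup $\mathcal H$ (which acts trivially by construction) with the deck group (which does not). Once this is corrected, (ii) follows in two lines with no dynamics: if $\widehat\pi(p)=\widehat\pi(p')$ then $p'=\sigma\cdot p$ for a unique deck transformation $\sigma$, and $\hat\phi(p')=e^{2\pi i\alpha(\sigma)}\hat\phi(p)$ with $\hat\phi$ never vanishing, so $\hat\phi(p)=\hat\phi(p')$ forces $\alpha(\sigma)\in\mathbb Z$, i.e.\ $\sigma=\mathrm{id}$. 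The dynamical detour you attempt as a repair does not work as written: $\phi$ is a scalar-valued function of two variables and is certainly not injective on $V^+$, and the fact that $\widehat H^N$ eventually lands in $\widehat V^+$ is part of what Proposition \ref{p:preliminary 3} establishes later (so invoking it here would risk circularity), nor does injectivity of $\widehat\pi|_{\widehat V^+}$ alone recover $p=p'$ from $\widehat H^Np=\widehat H^Np'$, since $\widehat H$ is $d$-to-one. The paper simply cites Ueda's Propositions 7.3.5--7.3.6, whose argument is the short one above.
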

The proof is exactly the same as in \cite{Ueda:Book}. Also $\widehat{H}(\widehat{V}^+) \subset \widehat{V}^+.$ The next step is to construct

\subsection{The form of the map \texorpdfstring{$\widehat{H}$}{} on an appropriate subdomain}\label{subsection 2} Recall that $R_H>1$ is the radius of filtration of the map $H$. For every $M>1$ define the sets
\[W_M^+=\left\{(x,y) \in \C^2: |y| > M\max\{|x|,R_H\}\right\}\] 
and
\[\widetilde{W}_M^+=\left\{(x,y) \in \C^2: |\phi(x,y)| > M\max\{|x|,R_H\}\right\}.\]
Note that by (\ref{e:Bottcher}), there exists $M_0>1$ such that for all $M>M_0$, $\widetilde{W}_M^+ \subset V^+$. Hence, consider the map $E_1: \widetilde{W}_M^+ \to W_M^+$ given by 
\[E_1(x,y)=(x, \phi(x,y)).\]
Then $E_1(\widetilde{W}_M^+) =W_M^+$ and $E_1$ is a biholomorphism whose inverse is
\[E_1^{-1}(x,y)=(x, \lambda(x,y))\]
for some holomorphic $\lambda: W_M^+ \to \C$.

\begin{lem}\label{l:preliminary 1}
For $\ep>0$ sufficiently small, there exists $M_\ep>M_0$ such that 
\begin{itemize}
    \item[(i)] for $(x,y) \in \widetilde{W}_{M_\ep}^+$
$$ 1-\ep\le \left| \frac{\phi(x,y)}{y}\right|\le 1+\ep  \text{ and }1-\ep\le  \left|\frac{\partial\phi}{\partial y}(x,y)\right|\le 1+\ep.$$ 
\item[(ii)] for $(x,y) \in W_{M_\ep}^+$
$$ 1-\ep\le \left| \frac{\lambda(x,y)}{y}\right|\le 1+\ep  \text{ and }1-\ep\le  \left|\frac{\partial\lambda}{\partial y}(x,y)\right|\le 1+\ep.$$ 
\end{itemize}
\end{lem}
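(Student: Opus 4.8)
The plan is to read the two estimates off the asymptotic description of the Böttcher function in \eqref{e:Bottcher}, namely $\phi(x,y) = y e^{\gamma(x,y)}$ with $\gamma(x,y) \to 0$ as $y \to \infty$. First I would make the convergence of $\gamma$ quantitative on the sets $\widetilde W_M^+$: since $|\phi| > M\max\{|x|, R_H\}$ on $\widetilde W_M^+$ and $|\phi/y| \to 1$, points in $\widetilde W_M^+$ have $|y|$ comparable to $|\phi|$, hence $|y| \gtrsim M R_H$ and $|y| \gtrsim M |x|$; in particular, on $\widetilde W_M^+$ one stays in the region where the product defining $\phi$ converges, and the tail can be controlled uniformly. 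So for any $\ep > 0$ there is $M_\ep > M_0$ with $|\gamma(x,y)| < \delta(\ep)$ on $\widetilde W_{M_\ep}^+$, where $\delta(\ep) \to 0$ as $\ep \to 0$; this immediately gives the first half of (i), $1-\ep \le |\phi/y| \le 1+\ep$, after shrinking $\delta$.

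For the derivative estimate in (i), I would differentiate $\phi = y e^\gamma$ to get $\partial\phi/\partial y = e^\gamma(1 + y\, \partial\gamma/\partial y)$, so it suffices to bound $|y\,\partial\gamma/\partial y|$. The function $\gamma$ is holomorphic and bounded (by $\delta(\ep)$) on $\widetilde W_{M_\ep}^+$, and it is bounded by a fixed constant on the slightly larger region $\widetilde W_{M_0}^+$; a Cauchy-estimate argument in the $y$-variable on a disc of radius comparable to $|y|$ (which is legitimate since such a disc around $(x,y) \in \widetilde W_{M_\ep}^+$ stays inside $\widetilde W_{M_0}^+$ once $M_\ep$ is large enough relative to $M_0$) bounds $|\partial\gamma/\partial y|$ by $C\delta(\ep)/|y|$, hence $|y\,\partial\gamma/\partial y| \le C\delta(\ep)$. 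Shrinking again, this yields $1-\ep \le |\partial\phi/\partial y| \le 1+\ep$ on $\widetilde W_{M_\ep}^+$.

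Part (ii) is obtained by transporting (i) through the biholomorphism $E_1(x,y) = (x,\phi(x,y))$, whose inverse is $E_1^{-1}(x,y) = (x,\lambda(x,y))$. Writing $\lambda(x,\phi(x,y)) = y$ and differentiating in $y$ gives $(\partial\lambda/\partial y)(x,\phi(x,y)) \cdot (\partial\phi/\partial y)(x,y) = 1$, so $|\partial\lambda/\partial y| = |\partial\phi/\partial y|^{-1}$ at the corresponding points, and the ratio bound $|\lambda/y|$ at a point $(x,y) \in W_{M_\ep}^+$ is just $|y'/\phi(x,y')|$ where $y' = \lambda(x,y)$; both are controlled by (i) after possibly enlarging $M_\ep$ once more so that $E_1^{-1}(W_{M_\ep}^+) \subset \widetilde W_{M_\ep'}^+$ with $\ep'$ adjusted to absorb the distortion. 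The only mild subtlety — and the one place that needs a little care rather than being completely routine — is checking that for $M_\ep$ large enough the Cauchy-estimate discs and the containment $\widetilde W_{M_\ep}^+ \subset V^+$, $E_1^{-1}(W_{M_\ep}^+) \subset \widetilde W^+_{M_0}$ really hold simultaneously with the quantitative bound on $\gamma$; this is a matter of bookkeeping the various constants ($R_H$, $M_0$, and the implied constants in $|\phi/y|\to 1$) in the right order, choosing $M_\ep$ last.
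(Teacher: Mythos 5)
Your proof is correct and essentially follows the paper's route: read the ratio estimates off the B\"ottcher asymptotics \eqref{e:Bottcher}, promote them to derivative bounds via a Cauchy estimate on discs nested inside a slightly larger $\widetilde{W}^+$-region, and pass to $\lambda$ through the identity $\lambda(x,\phi(x,y))=y$. The one substantive place you diverge is the radius in the Cauchy step, and it is in your favour: the paper invokes Cauchy's inequality on polydiscs of polyradius $1$, while you estimate $\gamma$ on discs of radius comparable to $|y|$ so that $|y\,\partial\gamma/\partial y|\lesssim\sup|\gamma|$. A radius-$1$ estimate applied to $\phi(x,y)-y$ only yields a bound of order $\ep|y|$, which grows with $|y|$; to make a unit-radius estimate close the argument one would need a decay rate for $\gamma$ sharper than the bare $\gamma\to 0$ recorded in \eqref{e:Bottcher}. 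Your choice of radius $\sim|y|$ makes the argument work with only $\gamma\to 0$, which is exactly what is provided. (A cosmetic difference: the paper uses $\lambda(x,\phi(x,y))=y$ to get the lower bound on $|\partial\phi/\partial y|$ from the upper bound on $|\partial\lambda/\partial y|$, whereas you obtain both bounds at once from $\partial\phi/\partial y=e^{\gamma}(1+y\,\partial\gamma/\partial y)$.) The bookkeeping you flag---that the discs remain inside $V^+$ and inside the region where $|\gamma|$ is small once $M_\ep$ is large relative to $M_0$, $M_1$---is indeed routine once the radius is fixed this way, because on $\widetilde{W}_M^+$ one has $|y|\gtrsim M R_H$ up to the factor $|\phi/y|$, which the first estimate controls.
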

\begin{proof}
From (\ref{e:Bottcher}) there exists $M_1\ge 0$ such that 
$$ 1-\ep\le \left| \frac{\phi(x,y)}{y}\right|\le 1+\ep  \text{ and }1-\ep\le \left| \frac{\lambda(\tilde{x},\tilde{y})}{\tilde{y}}\right|\le 1+\ep$$ 
for $(x,y) \in \widetilde{W}_{M_1}^+$ and $(\tilde{x},\tilde{y}) \in W_{M_1}^+$ respectively. Let $D^2(z;1)$ denote the polydisk of polyradius $1$ centered at $z$. Now choose $M_\ep> M_1$ such that $D^2(z; 1) \subset \widetilde{W}_{M_1}^+ $ and $D^2(\tilde{z}; 1) \subset W_{M_1}^+ $ for every $z \in \widetilde{W}_{M_\ep}^+$ and $\tilde{z} \in W_{M_\ep}^+$ respectively. By the Cauchy estimates,
$$ \left|\frac{\partial\phi}{\partial y}(x,y)\right|\le 1+\ep \text{ and }   \left|\frac{\partial\lambda}{\partial y}(\ti{x},\ti{y})\right|\le 1+\ep,$$ 
for $(x,y) \in \widetilde{W}_{M_\ep}^+$ and $(\tilde{x},\tilde{y}) \in W_{M_\ep}^+$
respectively. Since $E_1(\w{W}_{M_\ep}^+)=W_{M_\ep}^+$ and $\lambda(x, \phi(x,y))=y$, 
\[\left|\frac{\partial\phi}{\partial y}(x,y)\right|=\left|\frac{\partial\lambda}{\partial y}\left(x,\phi(x,y)\right)\right|^{-1}\ge (1+\ep)^{-1}\ge 1-\ep.\]
A similar argument can be applied to obtain the lower bound on $\left|\frac{\partial\lambda}{\partial y}(\ti{x},\ti{y})\right|$ for $(\tilde{x},\tilde{y}) \in W_{M_\ep}^+$.
\end{proof}

\begin{rem}\label{r:preliminary 2}
By (\ref{e:Bottcher}), $e^{\gamma(x,y)} \to 1$ as $y \to \infty$ on $V^+$. Since
\[ \frac{\partial\phi}{\partial y}(x,y)=e^{\gamma(x,y)}+ \phi(x,y)\gamma_y(x,y)\]
it follows from Lemma \ref{l:preliminary 1} that for $\ep>0$ sufficiently small,
\begin{itemize}
    \item[(i)] $\displaystyle \frac{\phi(x,y)}{y} \to 1  \text{ and }\frac{\partial\phi}{\partial y}(x,y) \to 1$ in $\widetilde{W}_{M_\ep}^+$ as $y \to \infty$,

    \smallskip
\item[(ii)] $\displaystyle \frac{\lambda(x,y)}{y} \to 1  \text{ and }\frac{\partial\lambda}{\partial y}(x,y)\to  1$ in $W_{M_\ep}^+$ as $y \to \infty$.
\end{itemize}
\end{rem}
For the next part of the computations in this subsection, we will fix the constant $M>M_{\ep}$ where $0<\ep<1/2$ is sufficiently small such that $H(\widetilde{W}_M^+) \subset \I{\widetilde{W}_M^+}$. Consider the map $E_2$ on $W_M^+$ defined as
\[E_2(x,y)=(\psi(x,y),y) \text{ where } \psi(x,y)=y \int_0^x \frac{\partial \lambda}{\partial y}(t,y)dt.\]
Note that $\psi$ is well-defined on $W_M^+$ by Lemma \ref{l:preliminary 1}. Next, we claim that $E_2$ is injective on $W_M^+$. To see this, suppose that $E_2(x_1,y_1)=E_2(x_2,y_2)$ on $W_M^+$.  Then $y_1=y_2=y \neq 0$ and
\[y \int_{x_1}^{x_2} \frac{\partial \lambda}{\partial y}(t,y)dt=0, \text{ i.e., }\int_{x_1}^{x_2} \frac{\partial \lambda}{\partial y}(t,y)dt=0,\]
where $x_1,x_2$ belong to the disc $D\left(0,|y|/M\right) \subset \C$. In particular, by Remark \ref{r:preliminary 2} (ii) and the choice of $\ep>0$,
\begin{align}\label{e:lambda}
|x_1-x_2| \le \left|\int_{x_1}^{x_2} \left(\frac{\partial \lambda}{\partial y}(t,y)-1\right)dt\right|\le \frac{1}{2}|x_1-x_2|.
\end{align}
Hence $x_1=x_2$ and this shows the injectivity of $E_2$ on $W_M^+$. Now consider the map $\Psi=E_2 \circ E_1$ on $\widetilde{W}_M^+$. Again, as $\lambda(x, \phi(x,y))=y$ for every $(x,y) \in \widetilde{W}_M^+$,
\begin{align}\label{e:det Psi} 
\text{Det } D\Psi(x,y)=\phi(x,y)\frac{\partial \phi}{\partial y}(x,y)\frac{\partial \lambda}{\partial y}\left(x,\phi(x,y)\right)=\phi(x,y) \neq 0.
\end{align}
Let $\Omega_M^+=\Psi(\widetilde{W}_M^+)$ which is a domain in $\C^2$. Then $\Psi$ is a biholomorphism from $\widetilde{W}_M^+$ onto $\Omega_M^+$.

\begin{lem}\label{l:preliminary 2}
  Let $\widetilde{H'}$ be the lift of the map $H$ from $\widetilde{W}_M^+$ to $\Omega_M^+$, i.e., $\widetilde{H'}=\Psi \circ H \circ \Psi^{-1}$. Then there exists a holomorphic map $\tilde{Q}$ such that
  \[\widetilde{H'}(z,\z)=\left(\frac{a}{d}z+ \tilde{Q}(\z), \z^d\right).\]
  Also for $|\z| >MR_H$, $\displaystyle \tilde{Q}(\z)=\z^{d+d'}+A_{d+d'-1}\z^{d+d'-1}+\cdots+ A_0+\sum_{i=1}^\infty A_{-i}\z^{-i}$, where $d, d'$ are as defined before.
\end{lem}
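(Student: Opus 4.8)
The plan is to transport the (known) form of the lifted map through the explicitly constructed biholomorphism $\Psi = E_2 \circ E_1$, and then extract the asymptotic expansion of $\tilde Q$ from the asymptotics of $\phi$, $\lambda$, $\psi$ near infinity. First I would write $\widetilde{H'} = \Psi \circ H \circ \Psi^{-1}$ and compute its second coordinate. Since $E_1(x,y) = (x,\phi(x,y))$, $E_2$ fixes the second coordinate, and $\phi \circ H = \phi^d$ (Proposition \ref{p:UedaBook}(iii), or rather its pre-lift version on $V^+$), the second coordinate of $\widetilde{H'}(z,\zeta)$ is $\zeta^d$; this is the easy part. For the first coordinate, the key structural point is \emph{affineness in} $z$: I would differentiate the first coordinate of $\widetilde{H'}$ with respect to $z$ and show the derivative is the constant $a/d$. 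This follows from the chain rule together with (\ref{e:det Psi}): indeed $\mathrm{Det}\, D\Psi = \phi$ and $\mathrm{Det}\, D\widetilde{H'} = (\mathrm{Det}\, D\Psi \circ H)\cdot (\mathrm{Det}\, DH) \cdot (\mathrm{Det}\, D\Psi^{-1})$, and since $\mathrm{Det}\, DH = a$ is constant while $\phi \circ H = \phi^d$, one gets $\mathrm{Det}\, D\widetilde{H'}(z,\zeta) = a \phi^d/\phi$ evaluated appropriately, i.e. a function of $\zeta$ alone equal to (a constant multiple of) $\zeta^{d-1}$. On the other hand, writing $\widetilde{H'}(z,\zeta) = (g(z,\zeta),\zeta^d)$ we have $\mathrm{Det}\, D\widetilde{H'} = d\zeta^{d-1}\,\partial g/\partial z$, so $\partial g/\partial z$ is a function of $\zeta$ only; comparing leading behaviour pins it down to the constant $a/d$. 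Hence $g(z,\zeta) = \frac{a}{d}z + \tilde Q(\zeta)$ for a holomorphic function $\tilde Q$ (holomorphic on the $\zeta$-slice of $\Omega_M^+$, which one checks contains $\{|\zeta| > MR_H\}$).

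Next I would compute the expansion of $\tilde Q$ for $|\zeta|$ large. Restricting to $z = 0$ (i.e. to the slice where the first coordinate vanishes), $\tilde Q(\zeta) = g(0,\zeta) = \big(\Psi \circ H \circ \Psi^{-1}(0,\zeta)\big)_1$. Unwinding $\Psi^{-1} = E_1^{-1}\circ E_2^{-1}$: from $(0,\zeta)$ we have $E_2^{-1}(0,\zeta) = (0,\zeta)$ since $\psi(0,y)=0$, then $E_1^{-1}(0,\zeta) = (0,\lambda(0,\zeta))$. Apply $H$: with $H(x,y) = H_m\circ\cdots\circ H_1(x,y)$ and $H_i(x,y)=(y,p_i(y)-a_ix)$, the image $H(0,\lambda(0,\zeta))$ has first coordinate a polynomial-type expression in $\lambda(0,\zeta)$ whose leading term is $\lambda(0,\zeta)^{d'}$ (degree of the first coordinate of $H$) up to the constants, and second coordinate with leading term $\lambda(0,\zeta)^{d}$. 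Finally apply $\Psi = E_2\circ E_1$ to get the first coordinate $\psi\big(x(\zeta), \phi(x(\zeta), y(\zeta))\big)$. Using Lemma \ref{l:preliminary 1} and Remark \ref{r:preliminary 2}, $\lambda(0,\zeta) = \zeta(1+o(1))$, $\phi(x,y) = y(1+o(1))$, $\partial\lambda/\partial y = 1+o(1)$ so $\psi(x,y) = xy(1+o(1))$; chaining these, the leading term of $\tilde Q(\zeta)$ is $\zeta^{d}\cdot\zeta^{d'} \cdot (\text{const}) = \zeta^{d+d'}$ after absorbing the normalizing constants into the identification $\w H$ in (\ref{e:HOv-Q}). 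Since each of $\phi,\lambda,\psi,\partial\lambda/\partial y$ admits a convergent Laurent expansion in $1/y$ (resp. $1/\zeta$) on the relevant region — $\phi = y e^{\gamma}$ with $\gamma$ holomorphic and $\to 0$, and $p_i$ are polynomials — the composite $\tilde Q(\zeta)$ has a Laurent expansion $\zeta^{d+d'} + A_{d+d'-1}\zeta^{d+d'-1} + \cdots + A_0 + \sum_{i\ge1} A_{-i}\zeta^{-i}$ convergent for $|\zeta| > MR_H$.

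The main obstacle I anticipate is bookkeeping the precise powers and the constant in front of the leading term: one must carefully track how the degrees $d' = d_{m-1}\cdots d_1$ and $d = d_m\cdots d_1$ of the two coordinates of a \emph{generalised} (not simple) H\'enon map propagate through the composition $H_m\circ\cdots\circ H_1$, and verify that no cancellation occurs in the top-degree terms when passing through $\psi$ and $\phi$ — this is exactly where the monicity of the $p_i$ and the estimates of Lemma \ref{l:preliminary 1} are used. A secondary technical point is justifying that the slice $\{(0,\zeta)\}$ actually lies in $\Omega_M^+$ for $|\zeta| > MR_H$ and that $\widetilde{H'}$ maps it into the region where the expansion is valid, which follows from $H(\widetilde W_M^+) \subset \I{\widetilde W_M^+}$ together with the defining inequalities of $W_M^+$; and checking the convergence of the Laurent series, which is routine given that everything in sight is a uniform limit of holomorphic functions on the relevant sets. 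Once these are in place, matching with the global form (\ref{e:HOv-Q}) of $\w H$ — obtained by further conjugating by a dilation in $z$ and possibly truncating the negative-power tail via the extension to all of $\cover$ — completes the proof.
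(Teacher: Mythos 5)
Your proposal follows the paper's proof essentially verbatim: second coordinate via $\phi\circ H=\phi^d$, affineness in $z$ via the Jacobian identity $\mathrm{Det}\,D\Psi=\phi$ and (\ref{e:det Psi}), and the asymptotics of $\tilde Q$ by evaluating at $z=0$ and tracing $(0,\zeta)\mapsto(0,\lambda(0,\zeta))\mapsto H(0,\lambda(0,\zeta))\mapsto\psi(P_1(\lambda(0,\zeta)),\zeta^d)$ with $P_1$ monic of degree $d'$ and $\partial\lambda/\partial y\to1$. One small correction: no ``dilation in $z$'' or constant-absorption into (\ref{e:HOv-Q}) is required at this stage --- the leading coefficient is \emph{exactly} $1$ because $P_1=\pi_1\circ H(0,\cdot)$ is monic and $\psi(x,y)\sim xy$, and the lemma asserts only the form with the negative Laurent tail intact (the removal of that tail is the subsequent step via $E_3$, not part of this lemma).
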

\begin{proof}
Let $h_1(z,\z)=\pi_1 \circ \widetilde{H'}(z, \z)$ and $h_2(z,\z)=\pi_2 \circ \widetilde{H'}(z, \z)$ be the components of $\widetilde{H'}$. From the properties of the B\"{o}ttcher coordinate $\phi$ as discussed in subsection \ref{subsection 1}, it follows that for $(x,y) \in \widetilde{W}_M^+$
\[h_2 (\Psi(x,y))=\phi\circ H(x,y)=\phi(x,y)^d, \text{ i.e., } h_2(z,\z)=\z^d\]
for $(z,\z) \in \Omega_M^+$. In particular, $\frac{\partial h_2}{\partial \z}(z,\z)=d \z^{d-1}$ and $\frac{\partial h_2}{\partial z}(z,\z)=0.$ Thus
\[\text{Det } D\widetilde{H'}(z,\z)=d\z^{d-1}\frac{\partial h_1}{\partial z}(z,\z).\]
Now on $\widetilde{W}_M^+$, $\widetilde{H'}\circ \Psi =\Psi \circ H $. Hence  from (\ref{e:det Psi})
\[\text{Det } D\widetilde{H'}(\Psi(x,y))=\text{Det } D\widetilde{H'}(\psi(x,\phi(x,y)), \phi(x,y))=a \phi(x,y)^{d-1},\]
i.e., $\text{Det } D\widetilde{H'}(z,\z)=a \z^{d-1}=d\z^{d-1}\frac{\partial h_1}{\partial z}(z,\z).$ So for every $(z, \z) \in \Omega_M^+$
\[\frac{\partial h_1}{\partial z}(z,\z)=\frac{a}{d} \text{ or } h_1(z, \z)=\frac{a}{d}z+ \tilde{Q}(\z)\]
where $\tilde{Q}$ is holomorphic on $\Omega_M^+$. To prove that $\tilde{Q}$ has the desired form,
it suffices to show $Q(\z)/\z^{d+d'} \to 1$ as $|\z| \to \infty$. Since $E_2^{-1}(0, \z)=(0,\z)$, 
\begin{align*}
\tilde{Q}(\z)=h_1(0,\z)&=\psi \circ H \circ E_1^{-1}(0,\z)=\psi(H(0, \lambda(0,\z)))=\psi(P_1(\lambda(0,\z)),\phi \circ H(0,\lambda(0,\z)))\\
&=\psi\left(P_1(\lambda(0,\z)),(\phi(0,\lambda(0,\z))^d\right)=\psi\left(P_1(\lambda(0,\z)), \z^d\right)
\end{align*}
where $P_1(x)=\pi_1 \circ H(0,x)$, a monic polynomial of degree $d'=dd_m^{-1}$. Thus
\[\tilde{Q}(\z)=\psi\left(P_1\left(\lambda(0,\z)\right),\z^d\right)=\z^d \int_0^{P_1(\lambda(0,\z))} \frac{\partial \lambda}{\partial y}\left(t,\z^d\right)dt.\]
Now, from Remark \ref{r:preliminary 2} (ii) it follows that
$$ \frac{P_1(\lambda(0,\z))}{\z^{d'}}  \to 1$$ 
as $\z \to \infty$. Since $2d'\le d $, for $|\z|$ sufficiently large, $(t, \z^d) \in W_M^+$ for every $0\le |t|\le |P_1(\lambda(0,\z))|$. In particular, from Remark \ref{r:preliminary 2} (ii) again, it follows that as $\z \to \infty$, 
$$\frac{\partial\lambda}{\partial y}\left(t,\z^d\right) \to 1 \text{ whenever } |t|\le |P_1(\lambda(0,\z))|.$$
 Thus, as $\z \to \infty$,
\[\left|\frac{\tilde{Q}(\z)}{\z^{d+d'}}- \frac{P_1(\lambda(0,\z))}{\z^{d'}}\right| \to 0,\]
and consequently $\frac{\tilde{Q}(\z)}{\z^{d+d'}} \to 1$. This completes the proof.
\end{proof}

The Laurent expansion of $\tilde Q$ can be written as $\tilde{Q}(\z)=Q(\z)+Q^-(\z)$ where $Q$ is the principal part and $Q^-(\z)=\sum_{i=1}^\infty A_{-i}\z^{-i}$. Since $|\z|>MR_H $ for every $(z,\z) \in \Omega_M^+$ and $Q^-(\z)$ is defined there, the functions
\[
R_n(\z)=\sum_{i=0}^n \left(\frac{d}{a}\right)^{i+1}Q^-\left(\z^{d^i}\right)
\]
are well-defined for such $\z$. Then for every $|\z|>MR_H$, there exists $n_{\z} \ge 1$ and a constant $C>0$ such that 
\[|R_{n}(\z)-R_{n-1}(\z)|\le C\left|\frac{d}{a}\right|^{n}|\z|^{-d^n},\]
whenever $n \ge n_\z$. Hence $\{R_n\}$ converges locally uniformly to a holomorphic function, say $R$ on $\Omega_M^+$. Consider the injective map
$E_3$ on $\Omega_M^+$ as $E_3(x,y)=(x-R(y), y)$. Let $\widetilde{\Psi}=E_3 \circ \Psi$ and
$$\widetilde{\Omega}_M^+=E_3(\Omega_M^+)=\widetilde{\Psi} (\widetilde{W}_M^+).$$ 
Note that for $(z,\z) \in \widetilde{\Omega}_M^+$, 
\[
\left(\frac{a}{d}\right)R(\z)-R(\z^d)=Q^-(\z)
\]
and hence
\[\widetilde{\Psi} \circ H \circ \widetilde{\Psi}^{-1}(z,\z)=E_3 \circ \widetilde{H'} \circ E_3^{-1}(z,\z)=\left(\frac{a}{d}z+ Q(\z), \z^d\right),\]
which proves the following
\begin{prop}\label{p:main_subsect 2}
    Let $\widetilde{H}=\widetilde{\Psi} \circ H \circ \widetilde{\Psi}^{-1}$ on $\widetilde{\Omega}_M^+$. Then for $(z,\z) \in \widetilde{\Omega}_M^+$
    \[\widetilde{H}(z,\z)=\left(\frac{a}{d}z+ Q(\z), \z^d\right),\]
    where $Q(\z)=\z^{d+d'}+A_{d+d'-1}\z^{d+d'-1}+\cdots +A_0$.
\end{prop}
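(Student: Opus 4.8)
The plan is to build $\widetilde\Psi$ on top of the biholomorphism $\Psi$ furnished by Lemma~\ref{l:preliminary 2}, composing it with one further change of coordinates — affine in the first variable — that cancels the negative-power part of $\tilde Q$. Recall from that lemma that $\widetilde{H'}=\Psi\circ H\circ\Psi^{-1}$ acts on $\Omega_M^+$ by $(z,\z)\mapsto\big(\tfrac{a}{d}z+Q(\z)+Q^-(\z),\,\z^d\big)$, where $Q$ is the principal part and $Q^-(\z)=\sum_{i\ge1}A_{-i}\z^{-i}$, and that every point of $\Omega_M^+$ has second coordinate $\z$ with $|\z|>MR_H>1$ (since on $\widetilde W_M^+$ one has $|\phi(x,y)|>MR_H$ while neither $E_1$ nor $E_2$ alters that coordinate). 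I would take $E_3$ to be the fibrewise translation $E_3(x,y)=(x+R(y),y)$ — automatically injective, with holomorphic inverse $(x,y)\mapsto(x-R(y),y)$ — for a holomorphic function $R$ on $\{|y|>MR_H\}$ to be chosen, and impose that the conjugate $E_3\circ\widetilde{H'}\circ E_3^{-1}$ have first component $\tfrac{a}{d}z+Q(\z)$. A one-line substitution shows this holds precisely when $R$ solves the functional (cohomological) equation
\[
\tfrac{a}{d}\,R(\z)-R(\z^d)=Q^-(\z).
\]
(The sign appearing in $E_3$ is exactly the one forced by this requirement.)

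To produce such an $R$ I would iterate the equation: substituting $\z\mapsto\z^d$ repeatedly and summing the resulting telescoping series gives the candidate $R(\z)=\sum_{i\ge0}(d/a)^{i+1}Q^-(\z^{d^i})$, that is, the limit of the partial sums $R_n$ introduced above. The only genuine analytic content is the convergence of this series, and the geometry is exactly right for it: since $Q^-$ vanishes at infinity, on $\{|\z|>MR_H\}$ one has $|Q^-(\z^{d^i})|=O(|\z|^{-d^i})$, so the general term has size $O(|d/a|^{i+1}|\z|^{-d^i})$, and because $d\ge2$ the factor $|\z|^{-d^i}$ decays \emph{doubly} exponentially in $i$, overwhelming the merely geometric growth $|d/a|^{i+1}$. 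Hence $\{R_n\}$ is uniformly Cauchy on compact subsets of $\{|\z|>MR_H\}$ and its limit $R$ is holomorphic there; and since the finite identity $\tfrac{a}{d}R_n(\z)-R_{n-1}(\z^d)=Q^-(\z)$ is immediate from the definition of $R_n$ by telescoping, letting $n\to\infty$ yields the displayed functional equation for $R$.

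It then remains only to assemble. Put $\widetilde\Psi=E_3\circ\Psi$; since $\Psi:\widetilde W_M^+\to\Omega_M^+$ is a biholomorphism and $E_3$ is a biholomorphism of $\Omega_M^+$ onto $\widetilde\Omega_M^+:=E_3(\Omega_M^+)$, so is $\widetilde\Psi:\widetilde W_M^+\to\widetilde\Omega_M^+$, and the relation $H(\widetilde W_M^+)\subset\widetilde W_M^+$ (from the choice of $M$) makes $\widetilde H:=\widetilde\Psi\circ H\circ\widetilde\Psi^{-1}=E_3\circ\widetilde{H'}\circ E_3^{-1}$ a well-defined self-map of $\widetilde\Omega_M^+$. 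Substituting $E_3^{-1}(z,\z)=(z-R(\z),\z)$, then $\widetilde{H'}$, then $E_3$, one obtains
\[
\widetilde H(z,\z)=\Big(\tfrac{a}{d}z-\tfrac{a}{d}R(\z)+Q(\z)+Q^-(\z)+R(\z^d),\ \z^d\Big)=\Big(\tfrac{a}{d}z+Q(\z),\ \z^d\Big)
\]
by the functional equation, with $Q(\z)=\z^{d+d'}+A_{d+d'-1}\z^{d+d'-1}+\cdots+A_0$ exactly the principal part identified in Lemma~\ref{l:preliminary 2}. This is the assertion of the proposition.

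I expect the sole genuine obstacle to be the convergence estimate for $\sum(d/a)^{i+1}Q^-(\z^{d^i})$, together with the bookkeeping needed to stay inside $\{|\z|>MR_H\}$ so that every iterate $\z^{d^i}$ avoids the poles of $Q^-$ and the series is even defined; everything else is formal manipulation of the conjugacy $\widetilde{H'}$ already produced in Lemma~\ref{l:preliminary 2}.
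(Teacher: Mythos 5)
Your proposal is correct and follows essentially the same route as the paper: decompose $\tilde Q=Q+Q^-$ using Lemma~\ref{l:preliminary 2}, solve the cohomological equation $\tfrac{a}{d}R(\z)-R(\z^d)=Q^-(\z)$ by the telescoping series $R=\sum_{i\ge0}(d/a)^{i+1}Q^-(\z^{d^i})$ (the doubly-exponential decay of $|\z|^{-d^i}$ on $\{|\z|>MR_H\}$ beats the geometric factor $|d/a|^{i+1}$), and conjugate $\widetilde{H'}$ by the fibrewise translation $E_3$ to absorb $Q^-$. One small remark in your favour: with the paper's stated functional equation (which is indeed what the series $R$ satisfies), the translation that cancels $Q^-$ is $E_3(x,y)=(x+R(y),y)$ exactly as you write, whereas the paper prints $E_3(x,y)=(x-R(y),y)$; a direct substitution with the paper's sign leaves a leftover $2Q^-(\z)$, so the sign in the paper is a typo and yours is the consistent choice.
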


\subsection{\texorpdfstring{$\widehat{U}^+$}{} is biholomorphic to \texorpdfstring{$\cover$}{}}\label{subsection 3}
Recall from Proposition \ref{p:UedaBook} that $\widehat{\pi}$ restricted to $\widehat{V}^+$ is a biholomorphism onto its image. Let $\widehat{\pi}(\widehat{W}_M^+)=\widetilde{W}_M^+$ where 
$ \widehat{W}_M^+ \subset \widehat{V}^+ \subset \widehat{U}^+$ and $M>0$ is sufficiently large as fixed in the previous subsection. As $\widehat{H}$ is the lift of $H$ to $\widehat{U}^+$,
\begin{align} \label{e:iterative_relation 1}
H \circ \widehat{\pi}=\widehat{\pi} \circ \widehat{H}  \text{ and }H^n \circ \widehat{\pi}=\widehat{\pi} \circ \widehat{H}^n  .
\end{align}
for every $n \ge 1$. By Proposition \ref{p:main_subsect 2}, $H(\widetilde{W}_M^+) \subset \widetilde{W}_M^+$on $\w{W}_M^+$ and hence
\begin{align} \label{e:iterative_relation 2}
\widetilde{H}\circ \w{\Psi}= \w{\Psi} \circ {H} \text{ and }  \widetilde{H}^n\circ \w{\Psi} =\w{\Psi} \circ {H}^n
\end{align}
on $\widetilde{W}_M^+$ and for every $n \ge 1$. Thus, we observe the following:
\begin{prop}\label{p:preliminary 3}
Both $\left\{\widehat{H}^{-n}\left(\widehat{W}_M^+\right)\right\}$ and $\left\{\widetilde{H}^{-n}\left(\w\Omega_M^+\right)\right\}$ are increasing sequences of domains with
\[ \widehat{U}^+= \bigcup_{n=0}^\infty\widehat{H}^{-n}\left(\widehat{W}_M^+\right) \text{ and } \cover=\bigcup_{n=0}^\infty \widetilde{H}^{-n}\left(\w{\Omega}_M^+\right).\]
\end{prop}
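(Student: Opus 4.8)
The plan is to prove the two assertions in parallel, since the two situations are linked by the biholomorphism $\w\Psi \colon \w{W}_M^+ \to \w\Omega_M^+$ and the conjugacy \eqref{e:iterative_relation 2}. I would proceed in three steps.

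\medskip\noindent\emph{Step 1: monotonicity.} First I would check that each sequence is increasing, i.e.\ $\widehat{W}_M^+ \subset \widehat{H}^{-1}(\widehat{W}_M^+)$ and $\w\Omega_M^+ \subset \widetilde{H}^{-1}(\w\Omega_M^+)$; iterating then gives $\widehat{H}^{-n}(\widehat{W}_M^+) \subset \widehat{H}^{-(n+1)}(\widehat{W}_M^+)$ for all $n$, and similarly downstairs. For the upstairs statement, recall that $M > M_\ep$ was chosen in subsection \ref{subsection 2} precisely so that $H(\w{W}_M^+) \subset \I{\w{W}_M^+} \subset \w{W}_M^+$; lifting via \eqref{e:iterative_relation 1} and using $\widehat{H}(\widehat{V}^+) \subset \widehat{V}^+$ together with the fact that $\widehat{\pi}$ is a biholomorphism on $\widehat{V}^+$ (Proposition \ref{p:UedaBook}(i)), one gets $\widehat{H}(\widehat{W}_M^+) \subset \widehat{W}_M^+$, which is equivalent to $\widehat{W}_M^+ \subset \widehat{H}^{-1}(\widehat{W}_M^+)$. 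The downstairs statement is then immediate from this by applying $\w\Psi$ and \eqref{e:iterative_relation 2}, or directly from Proposition \ref{p:main_subsect 2} since $\widetilde{H} = \w\Psi \circ H \circ \w\Psi^{-1}$ and $\w\Psi(\w{W}_M^+) = \w\Omega_M^+$. That each set is a domain (open and connected) follows because $\w{W}_M^+$ is connected, $\widehat{\pi}$ restricted to $\widehat{V}^+$ is a biholomorphism, and $\widehat{H}$, $\widetilde{H}$ are biholomorphisms of open sets, so preimages of domains are domains.

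\medskip\noindent\emph{Step 2: the union exhausts $\widehat{U}^+$.} Here I would use the filtration identity \eqref{e:filtration_2}, namely $U^+ = \bigcup_{n \ge 0} H^{-n}(V^+)$. Since $\w{W}_M^+ = \w{W}_{M_\ep}^+ \subset V^+$ and, by \eqref{e:filtration_2} applied inside $V^+$, every point of $U^+$ enters $\w{W}_M^+$ after finitely many forward iterates — more carefully, one uses that $G_H^+ = \log|\phi|$ grows like $d^n$ under $H^n$ on $V^+$, so $H^n(z) \in \w{W}_M^+$ for $n$ large whenever $z \in U^+$ — we get $U^+ = \bigcup_{n \ge 0} H^{-n}(\w{W}_M^+)$. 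Lifting this to $\widehat{U}^+$: given $p \in \widehat{U}^+$, pick $n$ with $H^n(\widehat{\pi}(p)) \in \w{W}_M^+$; then $\widehat{H}^n(p)$ projects into $\w{W}_M^+$ and lies in $\widehat{V}^+$ (since forward iterates eventually land and stay in $\widehat{V}^+$, as $\widehat{H}(\widehat{V}^+) \subset \widehat{V}^+$ and the lifted orbit must agree with the downstairs orbit under the biholomorphism $\widehat{\pi}|_{\widehat{V}^+}$), hence $\widehat{H}^n(p) \in \widehat{W}_M^+$, i.e.\ $p \in \widehat{H}^{-n}(\widehat{W}_M^+)$. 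The only subtlety to spell out is that the lift $\widehat{H}^n(p)$ of $H^n(\widehat{\pi}(p))$ that we land on is the one inside $\widehat{V}^+$ — this is where one invokes injectivity of $\widehat{\pi}$ on $\widehat{V}^+$ to identify it.

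\medskip\noindent\emph{Step 3: transport to $\cover$.} Applying $\w\Psi$ to Step 2 and using \eqref{e:iterative_relation 2}, $\w\Psi(\widehat{U}^+)$ — after identifying $\widehat{U}^+$ with $\cover$ via the extension of $\w\Psi$ built up through the $\widehat{H}^{-n}$-exhaustion — equals $\bigcup_{n \ge 0} \widetilde{H}^{-n}(\w\Omega_M^+)$; since $\w\Psi$ conjugates $\widehat{H}$ to $\widetilde{H}$ on the nested pieces, $\widetilde{H}^{-n}(\w\Omega_M^+) = \w\Psi(\widehat{H}^{-n}(\widehat{W}_M^+))$ and the union is all of $\cover$. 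Concretely: define $\w\Psi$ on $\widehat{H}^{-n}(\widehat{W}_M^+)$ by $\widetilde{H}^{-n} \circ \w\Psi \circ \widehat{H}^n$; these definitions agree on overlaps by \eqref{e:iterative_relation 2}, so they glue to a biholomorphism $\widehat{U}^+ \to \cover$ carrying the first exhaustion to the second. I expect the main obstacle to be Step 2 — specifically, making rigorous the claim that a lifted forward orbit eventually lands in $\widehat{W}_M^+$ and not merely in some other sheet over $\w{W}_M^+$; this requires carefully combining the forward invariance $\widehat{H}(\widehat{V}^+) \subset \widehat{V}^+$ with the injectivity of $\widehat{\pi}|_{\widehat{V}^+}$ to pin down which lift is reached, together with the quantitative escape estimate $G_H^+(H^n z) = d^n G_H^+(z) \to \infty$ on $U^+$ that guarantees entry into $\w{W}_M^+$ in finite time. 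Everything else is a routine consequence of the biholomorphism $\w\Psi$ and the conjugacy relations already established.
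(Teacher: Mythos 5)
Your Steps 1 and 2 are essentially sound and track the paper's treatment of the first assertion, $\widehat{U}^+= \bigcup_{n}\widehat{H}^{-n}(\widehat{W}_M^+)$: the forward invariance $H(\w{W}_M^+) \subset \I{\w{W}_M^+}$ and the fact that every $U^+$-orbit is eventually absorbed into $\w{W}_M^+$ (a neighbourhood of the super-attracting point $I^-$) give the exhaustion downstairs, which you then lift via $\widehat{\pi}$.

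However, Step~3 contains a genuine and serious gap, and it is in fact where the bulk of the work lies. Your gluing construction $\widetilde{H}^{-n}\circ\w\Psi\circ\widehat{H}^n$ on $\widehat{H}^{-n}(\widehat{W}_M^+)$ only produces an injective holomorphic map from $\widehat{U}^+$ onto $\bigcup_{n}\widetilde{H}^{-n}(\w\Omega_M^+)$, which is a priori a \emph{subset} of $\cover$. You then assert ``the union is all of $\cover$'' with no justification, and elsewhere speak of ``identifying $\widehat{U}^+$ with $\cover$''\,---\,but that identification is precisely what Proposition~\ref{p:preliminary 3} is needed to establish (it is an ingredient in the subsequent theorem asserting $\widehat{U}^+\cong\cover$), so you are reasoning in a circle. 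Nothing you have shown rules out the possibility that $\w\Psi$ extends to a proper open subset of $\cover$.

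What is actually required, and what the paper supplies, is a direct argument on $\cover$ that every point is eventually absorbed into $\w{\Omega}_M^+$ under forward iteration of $\w{H}$. Since $\w{H}(z,\z)=(\frac{a}{d}z+Q(\z),\z^d)$ extends as a proper self-map of $\C^2$, this is a concrete dynamical estimate, not a soft covering-space fact. The paper carries it out in two pieces: Lemma~\ref{l:intermediate} shows that the ``paraboloid'' region $S_{\w{M},t}=\{|z|<t|\z|^2,\ |\z|\ge\w{M}\}$ lies inside $\w\Omega_M^+$ (this requires reversing the construction of $\w\Psi=E_3\circ E_2\circ E_1$ and controlling the auxiliary functions $R$ and $\lambda$, using Remark~\ref{r:preliminary 2}); and then an explicit analysis of $h_n^z(\z)=\pi_1\circ\w{H}^n(z,\z)/\z^{2d^n}$, via the recursion \eqref{e:hn}, shows $h_n^z(\z)\to 0$ for every $(z,\z)\in\cover$, so that $\w{H}^n(z,\z)\in S_{\w{M},t}$ for $n$ large. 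This quantitative step is entirely absent from your proposal, and without it the second identity $\cover=\bigcup_{n}\widetilde{H}^{-n}(\w\Omega_M^+)$ is unproved.
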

\begin{proof}
Note that $I^-=[0:1:0]$ is a super-attracting fixed point of $H$ when extended to $\mathbb{P}^2$ with $U^+$ as its basin of attraction. Furthermore, $\w{W}_M^+$ corresponds to a neighbourhood of $I^-$ in $\mathbb{P}^2$. Thus, we have
\[U^+=\bigcup_{n=0}^\infty H^{-n}\left(\w{W}_M^+\right)\]
and $\left\{{H}^{-n}\left(\w{W}_M^+\right)\right\}$ is an increasing sequence. By (\ref{e:iterative_relation 1}), 
\[
H^n \circ \h{\pi} \circ \h{H}^{-n}\left(\h{W}_M^+\right)=\h{\pi}\left(\h{W}_M^+\right)=\w{W}_M^+\]
for every $n \ge 1$.
As $H$ is an automorphism of $\C^2$, $\h{\pi} \circ \h{H}^{-n}\left(\h{W}_M^+\right)= {H}^{-n}\left(\w{W}_M^+\right)$. Since $\left\{{H}^{-n}\left(\w{W}_M^+\right)\right\}$ is an increasing sequence, $\left\{\h{H}^{-n}\left(\h{W}_M^+\right)\right\}$ is also an increasing sequence of domains with $\displaystyle \widehat{U}^+= \bigcup_{n=0}^\infty\widehat{H}^{-n}\left(\widehat{W}_M^+\right)$. Consider $\C^2$ with coordinates $(z,\z)$ and the maps
\[P_1(z,\z)=(z,\z^d) \text{ and } P_2(z,\z)=\left(\frac{a}{d}z+Q(\z),\z\right).\]
 Note that $\w{H}=P_1 \circ P_2$ on $\w{W}_M^+$. Since $P_1$ is a proper self--map pf $\C^2$ and $P_2$ is an automorphism of $\C^2$, $\w{H}$ extends to a proper map on $\C^2$. Further, as $H\left(\w{W}_M^+\right) \subset \I{\w{W}_M^+}$, it follows from (\ref{e:iterative_relation 2}) that $\w{H}\left(\w{\Omega}_M^+\right)\subset \I{\w{\Omega}_M^+}$. Inductively, $\left\{\widetilde{H}^{-n}\left(\w\Omega_M^+\right)\right\}$ is an increasing sequence. Since $|\z|>MR_H$ in $\w{\Om}_M^+$, $\w{H}^{-n}\left(\w{\Om}_M^+\right) \subset \cover$ for every $n \ge 1$. Hence $\displaystyle\bigcup_{n=0}^\infty \widetilde{H}^{-n}\left(\w{\Omega}_M^+\right) \subseteq \cover$. To complete the reasoning, we first observe the following:

\begin{lem} \label{l:intermediate}
There exists $\w{M}>MR_H$ sufficiently large and $0<t<1$ such that 
$$S_{\w{M},t}=\left\{(z, \z) \in \C^2: |z|<t|\z|^2 \text{ and } |\z|\ge \w{M}\right\} \subset \w{\Om}_M^+.$$
\end{lem}
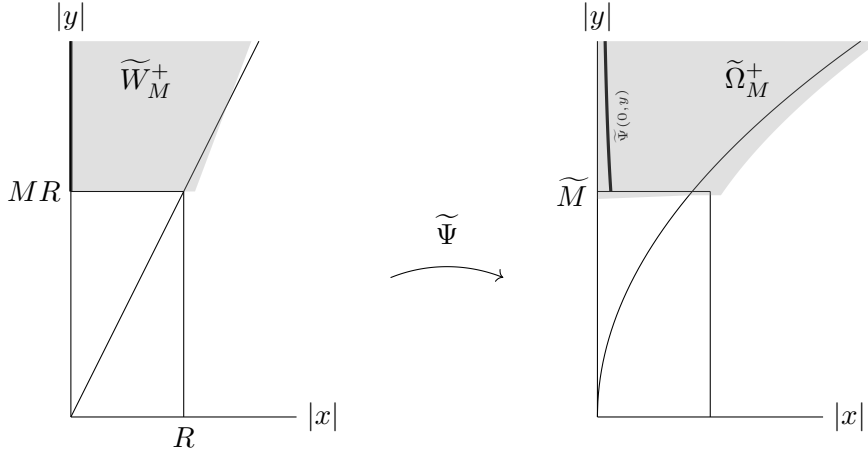
\begin{figure}[ht!]
\begin{tikzpicture}[scale=.5]
  \draw[-] (0,0) -- (6,0) node[right] {$|x|$};
  \draw[-] (0,0) -- (0,10) node[above] {$|y|$};

  \draw[-] (14,0) -- (20,0) node[right] {$|x|$};
  \draw[-] (14,0) -- (14,10) node[above] {$|y|$};

  \coordinate (O) at (0,0);
  \coordinate (R) at (3,0);
  \coordinate (MR) at (0,6);
  \coordinate(S) at (3,6);
  \coordinate (A) at (5,10);
  \coordinate (B) at (4.5,10);

   \coordinate (O1) at (14,0);
  \coordinate (R1) at (17,0);
  \coordinate (MR1) at (14,6);
  \coordinate(S1) at (17,6);
  \coordinate (A1) at (19,10);
  \coordinate (B1) at (18.5,10);
  
  \draw[thin] (O) -- (A);
  \draw[very thick, -] (0,6) -- (0,10);
  \draw[thin] (R) -- (S);
  \draw[thin] (MR) -- (S);

  \draw[thin, domain=0:10, smooth] plot ({.07*\x*\x + 14}, {\x});
  \draw[very thick, domain=6:10, smooth,-] plot ({5/\x^3+2/(\x) + 14}, {\x}); 
  \node[rotate=93, below, font=\tiny] at (14.15,8) {$\scriptstyle\w{\Psi}(0,y)$};
  \draw[thin] (R1) -- (S1);
  \draw[thin] (MR1) -- (S1);
\draw[ domain=8.5:11.5, smooth,->] plot ({\x}, {sqrt(16 - (\x - 10)^2)});
  \fill[black!40, opacity=0.3]  (0,6)--(5.5*.6, 6)-- (4.8,10)--(0,10)  -- cycle;
  \fill[black!40, opacity=0.3] (14,5.8) -- (.07*36+14,5.9) -- plot[domain=5.9:10, smooth]({.07*\x*\x + 14+5/\x}, {\x})-- (14,10)--cycle;
  \node[below] at (R) {$R$};
  \node[left] at (MR) {$MR$};
  \node[left] at (MR1) {$\w{M}$};
  \node at (2,9) {$\w{W}_M^+$};
  \node at (18,9) {$\w{\Om}_M^+$};
  \node at (10,5) {$\w{\Psi}$};
\end{tikzpicture}   
\caption{Behaviour of $\w{\Psi}$ on $\w{W}_M^+$ and the image of $\w{\Psi}(0,y)$.}
\label{f:cover}
\end{figure}

\begin{proof} 
Recall the function $R(\z)$ from the proof of Proposition \ref{p:main_subsect 2}. Choose $\w{M}>2MR_H$ sufficiently large, such that for $|\z|>\w{M}$, $|R(\z)|<\frac{1}{4M}|\z|^2$. Fix $t=\frac{1}{4M}$. Then for $(z,\z) \in S_{\w{M},t}$ 
\[\left|z+R(\z)\right|<\frac{1}{2M}|\z|^2 \text{ or } 2M \left|\frac{z+R(\z)}{\z}\right|<|\z|.\]
Hence $\left(\frac{z+R(\z)}{\z}, \z\right) \in W_{2M}^+ \subset W_M^+$. Recall the map $E_2$ from subsection \ref{subsection 2} that is defined on $W_M^+$. In particular, $E_2(x,y)=\p \circ E(x,y)$, where 
\[ E(x,y)=\left (\int_0^x \frac{\partial \lambda }{\partial y}(t,y)dt, y\right) \text{ and } \p(x,y)=(xy, y).\]
Note that $\p$ is defined on $\C^2$ and $E$ on $W_M^+$. Also, $\p$ is injective on $\C \times \C^*$ and for a fixed $|y|>MR_H$, the slice $D\left(0,\frac{|y|}{M}\right) \times \{y\} \subset W_M^+$. So consider the function
\[h_y(x)=\pi_1 \circ E(x,y)\]
that is well-defined on $D\left(0,\frac{|y|}{M}\right)$. By Remark \ref{r:preliminary 2}(ii) and by a similar argument as in (\ref{e:lambda}), 
\[|h_y(x)-x|\le \ep|x|<\frac{\ep}{M}|y| \text{ and }h_y(0)=0,\]
where $0<\ep<\frac{1}{2}$ as fixed there. Furthermore, $\ov{W_M^+} \subset V^+$ and $h_y$ extends continuously to $\partial D\left(0,\frac{|y|}{M}\right)$. Hence $h_y$ maps the boundary $\partial D(0, \frac{\vert y \vert}{M})$ into the annulus centered at the origin with inner radius $\frac{1-\ep}{M}|y|$ and outer radius $\frac{1+\ep}{M}|y|$. 

\medskip

We now claim that the entire disc $D\left(0, \frac{1-\ep}{M}|y|\right)$ is contained in $h_y \left( D\left(0,\frac{|y|}{M}\right)\right)$. To see this, note that there is a neighbourhood of the origin that is contained in the image $h_y \left( D\left(0,\frac{|y|}{M}\right)\right)$ as $h_y$ is a non-constant holomorphic map that fixes the origin. Let $\varrho >0$ be the largest disc centered at the origin that is contained in  $h_y \left( D\left(0,\frac{|y|}{M}\right)\right)$. Suppose that $\varrho < \frac{1-\ep}{M}|y|$. There must exist a $b_0$ with $\vert b_0 \vert = \varrho$ such that $b_0$ lies in the boundary of the image $h_y \left( D\left(0,\frac{|y|}{M}\right)\right)$. Choose $b_n \in h_y \left( D\left(0,\frac{|y|}{M}\right)\right)$ and pre-images $c_n \in h_y^{-1}(b_n) \cap D(0, \frac{\vert y \vert}{M})$. Any limit point $c_0$ of the $c_n$'s must lie on the boundary $\partial D(0, \frac{\vert y \vert}{M})$ by the open mapping theorem. This implies that $\vert h_y(c_0) \vert = \vert b_0 \vert = \varrho < \frac{1 - \ep}{M} \vert y \vert$ and this contradicts the fact that 
\[
\frac{1-\ep}{M}\vert y \vert \le \vert h_y(c_0) \vert \le \frac{1+ \ep}{M} \vert y \vert.
\]


Since $\ep<\frac{1}{2}$, it follows from the above that $D\left(0,\frac{|y|}{2M}\right) \times \{y\} \subset E(W_M^+)$ for every $|y|>MR_H$. Thus $E^{-1}\left(W_{2M}^+\right) \subset W_M^+$. In particular, if $(z, \z) \in S_{\w{M},t}$, where $\w{M}$ and $t$ are as fixed above, then 
\[ E^{-1}\left(\frac{z+R(\z)}{\z}, \z\right) \in W_M^+ \text{ or } (z,\z) \in E_3 \circ E_2 (W_M^+)=\w{\Om}_M^+.\qedhere\]
\end{proof}
Now, note that $\w{H}$ is defined on $\C^2$. So for every $(z, \z) \in \cover.$ 
\begin{align*}
 \w{H}^n(z,\z)&=\left(\left(\frac{a}{d}\right)^n z+\left(\frac{a}{d}\right)^{n-1}Q(\z)+\ldots+ \frac{a}{d}Q\left(\z^{d^{n-2}}\right) +Q\left(\z^{d^{n-1}}\right), \z^{d^n}\right),
\end{align*}
for every $n \ge 1$. Let  $h_n^z(\z)=\frac{\pi_1 \circ \w{H}^n(z,\z)}{\z^{2d^n}}$. 

\smallskip
{\it Claim:} For every $z \in \C$ and $|\z|>1$, $\displaystyle\lim_{n \to \infty} h_n^z(\z) \to 0 \text{ as } n \to \infty.$

\medskip 
Note that as $Q(\z)$ is a monic polynomial of degree $\z^{d+d'}$, where $d'=dd_m^{-1}$, there exists a constant $C_1>0$ such that
\[\left|\frac{Q(\z)}{\z^{2d}}\right|<\left|\frac{1}{\z^{d-d'}}\right|C_1\]
for every $|\z|>1$. Now fix a point $(z,\z) \in \cover$. Then for every $n \ge 1$
\begin{align}\label{e:hn}
    h_{n+1}^z(\z)=\frac{a}{d} \frac{h_n^z(\z)}{\z^{2d^n(d-1)}} + \frac{Q(\z^{d^n})}{\left(\z^{d^n}\right)^{2d}}.
\end{align}
In particular, for sufficiently large $n \ge 1$, $\left|\frac{a}{d} \z^{-2d^n(d-1)}\right|<1$, and
\begin{align*}
  \left|h_{n+1}^z(\z)\right|< \left|h_n^z(\z)\right| + \left|\frac{C_1}{\z^{d^n}}\right| \text{ or }  \Big|\left|h_{n+1}^z(\z)\right|-\left|h_n^z(\z)\right|\Big| \le \left|\frac{C_1}{\z^{d^n}}\right|.
\end{align*}
Thus $\{h_n^z(\z)\}$ is a bounded sequence, and from (\ref{e:hn}) there exist a constant $C>0$ such that
\[\left|h_n^z(\z)\right|< \left|\frac{C}{\z^{d^n}}\right|,\]
which proves the claim. Hence for sufficiently large $n$, $\w{H}^n(z,\z) \in S_{\w{M},t} \subset \w{\Om}_M^+$, where $\w{M}$ and $t$ are as chosen in the proof of Lemma \ref{l:intermediate}. This completes the proof of Proposition \ref{p:preliminary 3}.
\end{proof}
Finally, we conclude this subsection with the following result.
\begin{thm}
$\h{U}^+$ is biholomorphic to $\cover$. Consequently, the universal cover of $U^+$ is $\C \times \mathbb{D}$.
\end{thm}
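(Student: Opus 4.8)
The plan is to assemble the pieces already in place: Proposition \ref{p:preliminary 3} exhibits both $\widehat U^+$ and $\cover$ as increasing unions of preimages of the ``model'' domains $\widehat W_M^+$ and $\w\Omega_M^+$ under the lifted maps $\widehat H$ and $\w H$, and Proposition \ref{p:UedaBook}(i) together with the construction of $\w\Psi$ in Subsection \ref{subsection 2} gives a biholomorphism $\w\Psi\circ(\widehat\pi|_{\widehat V^+})^{-1}$ from $\widehat W_M^+$ onto $\w\Omega_M^+$ that by (\ref{e:iterative_relation 1}) and (\ref{e:iterative_relation 2}) intertwines $\widehat H$ and $\w H$ on these domains. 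First I would set $\Phi_0:=\w\Psi\circ(\widehat\pi|_{\widehat V^+})^{-1}:\widehat W_M^+\to\w\Omega_M^+$ and check the conjugacy relation $\Phi_0\circ\widehat H=\w H\circ\Phi_0$ wherever both sides are defined, i.e. on $\widehat H^{-1}(\widehat W_M^+)\cap\widehat W_M^+$; this is immediate from combining (\ref{e:iterative_relation 1}) with $\widehat\pi\circ\widehat H=H\circ\widehat\pi$, the definition of $\w\Psi$, and Proposition \ref{p:main_subsect 2}.

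Next I would extend $\Phi_0$ to all of $\widehat U^+$ by the standard dynamical-conjugation trick: for $p\in\widehat U^+$, choose $n$ so large that $\widehat H^n(p)\in\widehat W_M^+$ (possible by Proposition \ref{p:preliminary 3}, since $\{\widehat H^{-n}(\widehat W_M^+)\}$ increases to $\widehat U^+$), and define
\[
\Phi(p):=\w H^{-n}\bigl(\Phi_0(\widehat H^n(p))\bigr).
\]
I would then verify that this is independent of the choice of $n$: if $\widehat H^n(p)$ and $\widehat H^{n+1}(p)$ both lie in $\widehat W_M^+$, the conjugacy relation for $\Phi_0$ on the overlap forces $\w H^{-(n+1)}(\Phi_0(\widehat H^{n+1}(p)))=\w H^{-n}(\Phi_0(\widehat H^n(p)))$, so consistency propagates. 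Here I use that $\w H$, though only a proper self-map of $\C^2$, is a genuine automorphism of $\cover$ onto its image and more precisely that $\w H^{-n}$ is well-defined and holomorphic on $\w\Omega_M^+$ for every $n$ — this is exactly the content of Proposition \ref{p:preliminary 3}, which identifies $\cover=\bigcup_n\w H^{-n}(\w\Omega_M^+)$ as an increasing union. The map $\Phi$ so defined is holomorphic (being locally $\w H^{-n}\circ\Phi_0\circ\widehat H^n$, a composition of holomorphic maps) and by construction satisfies $\Phi\circ\widehat H=\w H\circ\Phi$ on all of $\widehat U^+$.

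The same recipe run in reverse, starting from $\Phi_0^{-1}:\w\Omega_M^+\to\widehat W_M^+$ and the inclusion $\cover=\bigcup_n\w H^{-n}(\w\Omega_M^+)$, produces a holomorphic map $\Psi:\cover\to\widehat U^+$; one checks $\Psi\circ\Phi=\mathrm{id}$ and $\Phi\circ\Psi=\mathrm{id}$ first on the model domains $\widehat W_M^+$, $\w\Omega_M^+$ where they reduce to $\Phi_0^{-1}\circ\Phi_0$ and $\Phi_0\circ\Phi_0^{-1}$, and then globally by pushing forward the identity through the conjugacies $\widehat H$ and $\w H$. Hence $\Phi$ is a biholomorphism $\widehat U^+\xrightarrow{\ \sim\ }\cover=\C\times(\C\sm\ov{\mathbb D})$. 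For the final sentence, recall from Subsection \ref{subsection 1} that $\widehat U^+$ is the cover of $U^+$ associated with the subgroup $\mathcal H\simeq\mathbb Z\le\uppi_1(U^+)$, so $\uppi_1(\widehat U^+)\simeq\mathbb Z$; composing $\Phi$ with the biholomorphism $\cover\cong\C\times\mathbb D^*$ and then taking the universal cover, which unwinds the $\mathbb Z$ coming from the $\C^*$-factor (the $\C$-factor being simply connected), yields $\C\times\mathbb D$ as the universal cover of both $\widehat U^+$ and $U^+$.

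The main obstacle is purely a bookkeeping one rather than a deep point: one must be careful that $\w H$ is only a proper, not invertible, self-map of the ambient $\C^2$, so ``$\w H^{-n}$'' must throughout be read as the inverse branch on the nested domains $\w\Omega_M^+\subset\w H^{-1}(\w\Omega_M^+)\subset\cdots$ furnished by Proposition \ref{p:preliminary 3}, and similarly the well-definedness of $\Phi$ hinges on the \emph{strict} inclusions $H(\w W_M^+)\subset\I{\w W_M^+}$ and $\w H(\w\Omega_M^+)\subset\I{\w\Omega_M^+}$ that guarantee the overlaps $\widehat H^{-1}(\widehat W_M^+)\cap\widehat W_M^+$ on which the conjugation identities are verified are genuinely open and nonempty. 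Once this is fixed, the argument is the routine ``exhaustion by conjugated model domains'' construction.
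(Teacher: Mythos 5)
Your overall strategy is the same as the paper's (start from the biholomorphism $\w\Psi\circ\h\pi$ between $\h W_M^+$ and $\w\Omega_M^+$, use the conjugacy with $\w H$ to extend, and exhaust via Proposition \ref{p:preliminary 3}), but there is a genuine gap in the middle step: the claim that ``$\w H$, though only a proper self-map of $\C^2$, is a genuine automorphism of $\cover$ onto its image'' is false. On $\cover$ the map $\w H(z,\z)=\bigl(\tfrac{a}{d}z+Q(\z),\z^d\bigr)$ is a degree-$d$ proper self-map, $d$-to-$1$ because of the $\z\mapsto\z^d$ factor, and likewise $\h H$ is $d$-to-$1$ on $\h U^+$ (the kernel being $\{\deck_{m/d}:0\le m<d\}$). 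So ``$\w H^{-n}$'' is not a map, and the recipe $\Phi(p)=\w H^{-n}\bigl(\Phi_0(\h H^n(p))\bigr)$ is underdetermined: for each $p$ there are $d^n$ candidate preimages in $\cover$, and your appeal to Proposition \ref{p:preliminary 3} (a statement about preimages of \emph{sets}) does not select among them. Because $\h H^n$ itself is $d^n$-to-$1$, the value $\Phi_0(\h H^n(p))$ alone has discarded exactly the information needed to distinguish the $d^n$ branches, so the ``consistency propagates'' argument does not close by itself.

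The paper's way around this is worth isolating, since it is the real content of the extension step. The second coordinate of $\w\Psi\circ\h\pi$ equals $\hat\phi$, which is \emph{already globally defined on all of $\h U^+$} (Subsection \ref{subsection 1}) and satisfies $\hat\phi\circ\h H=\hat\phi^{\,d}$; this fixes the branch of the $d^n$-th root canonically. Once the second coordinate is pinned down, the first coordinate $\hat\psi$ is uniquely recovered, with no branching at all, from the affine-in-$z$ structure of $\w H$:
\[
\hat\psi(p)=\tfrac{d}{a}\bigl(\hat\psi(\h H(p))-Q(\hat\phi(p))\bigr),
\]
and injectivity at each stage follows from Proposition \ref{p:UedaBook}(ii). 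To repair your write-up you should replace the symbol $\w H^{-n}$ by this explicit branch-selection: declare the second coordinate of $\Phi(p)$ to be $\hat\phi(p)$ and solve for the first coordinate via the displayed recursion (equivalently, extend by analytic continuation from the connected overlap $\h W_M^+\cap\h H^{-1}(\h W_M^+)$ and verify single-valuedness using the global $\hat\phi$). The concluding remark about the universal cover is fine as written.
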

\begin{proof}
Note $\w{\Psi}\circ \h{\pi}$ is a biholomorphism from $\h{W}_M^+$ to $\w{\Om}_M^+$ with 
\[\pi_2 \circ \w{\Psi}\circ \h{\pi}(p)=\hat{\phi}(p),\]
where $\hat{\phi}$ is as defined in subsection \ref{subsection 1} on $\h{U}^+$. Let $\hat{\psi}(p)=\pi_1 \circ \w{\Psi}\circ \h{\pi}(p).$ Then by Proposition \ref{p:main_subsect 2}, for $p \in \h{W}_M^+$
\[\left(\hat\psi\circ \h{H}(p),\hat\phi\circ \h{H}(p) \right)=\w{\Psi}\circ \h{\pi} \circ \h{H}(p)=\w{H}\circ \w{\Psi}\circ \h{\pi}(p)=\left(\frac{a}{d} \hat{\psi}(p)+Q\left(\hat{\phi}(p)\right), \hat{\phi}(p)^d\right).\]
For $p\in \h{H}^{-1}(\h{W}_M^+)$ define
\[\hat{\psi}(p)=\frac{d}{a} \left(\hat\psi\left(\h{H}(p)\right)-Q\left(\hat{\phi}(p)\right)\right).\]
Hence $\w{\Psi} \circ \h{\pi}$ extends to $\h{H}^{-1}(\h{W}_M^+)$ onto $\w{H}^{-1}(\w{\Om}_M^+)$. Now for $p,q \in \h{H}^{-1}(\h{W}_M^+)$ if
\[\w{\Psi} \circ \h{\pi}(p)=\w{\Psi} \circ \h{\pi}(q)\]
then $\hat\phi(p)=\hat{\phi}(q)$ and
\[\w{H}\circ \w{\Psi} \circ \h{\pi}(p)=\w{\Psi}\circ H \circ \h{\pi}(p)=\w{\Psi}\circ H \circ \h{\pi}(q)=\w{H}\circ \h{\Psi}\circ \w{\pi}(q).\]
As $\w{\Psi}$ is injective on $\w{W}_M^+$, it follows that
\[H \circ \h{\pi}(p)=H \circ \h{\pi}(q) \text{ or } \h{\pi}(p)=\h{\pi}(q).\]
Hence by Proposition \ref{p:UedaBook} we have $p=q$. Furthermore, from Proposition \ref{p:preliminary 3}, along with an induction argument it follows that $\w{\Psi} \circ \h{\pi}$ extends to a biholomorphism from $\h{U}^+$ to $\cover$. The second part is immediate as the universal cover of $\mathbb{D}\sm \{0\}$ is $\mathbb{D}$.
\end{proof}

\subsection{Deck transformations of \texorpdfstring{$\cover$}{}}\label{subsection 4} 
Note $\w{\Psi} \circ \h{\pi}$ is a biholomorphism between $\h{U}^+$ and $\cover$. Then $\h{\Pi}=\h{\pi} \circ \left(\w{\Psi} \circ \h{\pi}\right)^{-1} $ is the covering map. Since $\w{\Psi}$ and $\h{\pi}$ restricted to $\w{W}_M^+$ and $\h{W}_M^+$ respectively are biholomorphisms, it follows that $\h{\Pi}=\w{\Psi}^{-1}$ on $\w{\Om}_M^+$. Now by Proposition \ref{p:main_subsect 2}, $\w{H}$ is the lift of the map $H$, i.e., $H=\w{\Psi} \circ \w{H} \circ \w{\Psi}^{-1}$ on $\w{\Om}_M^+$ and the following diagram commutes.
\[\begin{tikzcd}
\cover \arrow{r}{\tilde{H}}   \arrow[swap]{d}{\widehat{\Pi}} & \cover  \arrow{d}{\widehat{\Pi}} \\
U^+ \arrow{r}{H} & U^+ 
\end{tikzcd}\]
The next steps are repetitions of the techniques used in \cite{Hu-Ov1} and \cite{BPV:IMRN}. However, we briefly revisit them, since $H$ is a generalised H\'{e}non map.
\begin{prop}\label{p:deck transformations}
For every $\left[\frac{k}{d^n}\right] \in \uppi_1(U^+)/\mathbb{Z}$, the deck transformation $\deck_{k/d^n}$ on $\cover$ satisfies the relation
\begin{align}\label{e:deck and H} 
\w{H} \circ \upgamma_{k/d^n}=\upgamma_{k/d^{n-1}} \circ \w{H}.
\end{align}
Thus, (\ref{e:deck transformations}) holds.
\end{prop}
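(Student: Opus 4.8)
The plan is to establish Proposition~\ref{p:deck transformations} in two stages: first prove the intertwining relation \eqref{e:deck and H}, and then use it to solve for the explicit form \eqref{e:deck transformations} of $\deck_{k/d^n}$ by induction on $n$.

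For the first stage, I would argue as follows. A deck transformation $\deck$ of the cover $\widehat{\Pi}\colon \cover \to U^+$ is, by definition, the unique lift of the identity map on $U^+$ sending a chosen base point to a prescribed point in its fibre; equivalently, it is characterised by $\widehat{\Pi}\circ\deck = \widehat{\Pi}$ together with its value at one point. Now $\widehat H$ descends to $H$, i.e. $\widehat{\Pi}\circ\widehat H = H\circ\widehat{\Pi}$, so $\widehat{\Pi}\circ(\widehat H\circ\deck) = H\circ\widehat{\Pi}\circ\deck = H\circ\widehat{\Pi} = \widehat{\Pi}\circ(\deck'\circ\widehat H)$ for the appropriate deck transformation $\deck'$; hence $\widehat H\circ\deck$ and $\deck'\circ\widehat H$ are both lifts of $H\circ\widehat{\Pi}$, and they agree once we check they agree on one point. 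The combinatorial content is to identify which deck transformation $\deck'$ arises: since $H$ acts on $\uppi_1(U^+)\simeq\mathbb{Z}[1/d]$ by multiplication by $d$ (Proposition~\ref{p1:ss:1-1}(i)), and the $\mathbb{Z}$-subgroup $\mathcal H$ is preserved, the induced action on $\uppi_1(U^+)/\mathbb{Z}$ is again multiplication by $d$, which carries the class $[k/d^n]$ to $[k/d^{n-1}]$. This gives $\deck' = \deck_{k/d^{n-1}}$, i.e. \eqref{e:deck and H}. One should double-check the base-point bookkeeping (the role of the path $l$ in $V^+$ used to define $\widehat H$), but this is exactly the computation carried out in \cite{Hu-Ov1} and \cite{BPV:IMRN} and goes through verbatim for generalised H\'enon maps.

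For the second stage, I would solve the recursion \eqref{e:deck and H}. Write $\deck_{k/d^n}(z,\z) = \big(z + S_n(\z),\ e^{2\pi i k/d^n}\cdot\z\big)$; the second coordinate is forced because $\deck$ must be a lift of the identity and the projection to the $\z$-circle records the angular monodromy $e^{2\pi i k/d^n}$, while the first coordinate must be an affine shift $z\mapsto z + S_n(\z)$ since $\widehat H$ is affine in $z$ and deck transformations commute with the $\C$-action coming from $\mathcal H\simeq\mathbb{Z}$ (this uses that $\deck$ preserves fibres and the structure of $\widetilde H$ in \eqref{e:HOv-Q}). Plugging this ansatz together with $\widetilde H(z,\z) = \big(\tfrac{a}{d}z + Q(\z),\ \z^d\big)$ into \eqref{e:deck and H} and comparing first coordinates yields
\[
\frac{a}{d}\big(z + S_n(\z)\big) + Q(\z) = \frac{a}{d}z + Q\!\left(e^{2\pi i k/d^n}\cdot\z\right) + S_{n-1}\!\left(\z^d\right),
\]
i.e. $\tfrac{a}{d}S_n(\z) = S_{n-1}(\z^d) + Q\big(e^{2\pi i k/d^n}\cdot\z\big) - Q(\z)$, with the base case $S_0 \equiv 0$ (the deck transformation for $[k/d^0] = [0]$ is the identity). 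Unwinding this telescoping recursion — substituting $\z\mapsto \z^{d^{\,l}}$ at the $l$-th step and noting $(e^{2\pi i k/d^n}\cdot\z)^{d^{\,l}} = e^{2\pi i k/d^{n-l}}\cdot\z^{d^{\,l}}$, which matches the angle of $\deck_{k/d^{n-l}}$ — gives precisely the closed form
\[
S_n(\z) = \frac{d}{a}\sum_{l=0}^{n-1}\left(\frac{d}{a}\right)^{l}\left(Q\!\left(\z^{d^{\,l}}\right) - Q\!\left(\big(e^{2\pi i k/d^n}\cdot\z\big)^{d^{\,l}}\right)\right),
\]
which is \eqref{e:deck transformations}.

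The main obstacle is not any single computation but making the identifications in the first stage rigorous: one must be careful that $\deck_{k/d^n}$ is well-defined independently of the representative $k/d^n$ (so that $\deck_{k/d^n} = \deck_{k'/d^{n'}}$ whenever $k/d^n - k'/d^{n'}\in\mathbb{Z}$, which is what makes \eqref{e:deck and H} meaningful for the shifted index), and that the action of $H_*$ on the quotient group is exactly multiplication by $d$ with the base points lined up correctly. Once the group-theoretic dictionary between $\uppi_1(U^+)/\mathbb{Z}$ and deck transformations is set up as in \cite{Ueda:Book} and the intertwining \eqref{e:deck and H} is in hand, the derivation of \eqref{e:deck transformations} is a formal induction. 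Since $H$ is a generalised rather than simple H\'enon map, the only change from \cite{Hu-Ov1}, \cite{BPV:IMRN} is cosmetic — the degree is $d = d_m\cdots d_1$ and $Q$ has degree $d + d'$ with $d' = d/d_m$ — so I would present the argument concisely, indicating that the cited proofs apply mutatis mutandis, and spell out only the recursion above in full.
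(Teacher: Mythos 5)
Your two-stage plan (establish the intertwining relation $\w{H}\circ\deck_{k/d^n}=\deck_{k/d^{n-1}}\circ\w{H}$ via the action of $H_*$ on $\uppi_1(U^+)/\mathbb{Z}$, then solve the resulting recursion by induction on $n$) is exactly the structure of the paper's proof, and both stages are sound. The paper establishes the intertwining relation by working explicitly with the path representatives $[z,C]$ and the map $\widehat{\pi}$, whereas you argue more abstractly via uniqueness of lifts and the monodromy calculation $H_*[k/d^n]=[k/d^{n-1}]$; these are the same idea presented at different levels of explicitness, and the second stage (the telescoping recursion with $S_0\equiv 0$) is identical.

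One computational slip to fix: when you plug the ansatz into \eqref{e:deck and H}, the $Q$-terms are on the wrong sides. On the left, $\w{H}\circ\deck_{k/d^n}$ applies $\deck_{k/d^n}$ first, so $Q$ is evaluated at the rotated variable $e^{2\pi ik/d^n}\z$; on the right, $\deck_{k/d^{n-1}}\circ\w{H}$ applies $\w{H}$ first, so $Q(\z)$ appears. The correct equation is
\[
\frac{a}{d}\big(z+S_n(\z)\big)+Q\!\left(e^{2\pi ik/d^n}\z\right)=\frac{a}{d}z+Q(\z)+S_{n-1}\!\left(\z^d\right),
\]
giving $\tfrac{a}{d}S_n(\z)=S_{n-1}(\z^d)+Q(\z)-Q(e^{2\pi ik/d^n}\z)$. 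As written your recursion has the opposite sign, yet your final closed form carries the correct sign (matching \eqref{e:deck transformations}); the two statements are not consistent with each other. With the recursion fixed, the unwinding proceeds exactly as you describe and yields \eqref{e:deck transformations}.
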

\begin{proof}
As $\uppi_1(\w{U}^+)=\mathbb{Z}$, for every $\left[\frac{k}{d^n}\right] \in \uppi_1(U^+)/\mathbb{Z}$ there is a deck transformation $\h{\deck}_{k/d^n}$ such that $\h{\pi} \circ \h{\deck}_{k/d^n}=\h{\pi}$ on $\w{U}^+$. Furthermore, for $[p,C] \in \w{U}^+$ note that
\[ \h{\deck}_{k/d^n}\left([p,C]\right)=\left[p,C_{k/d^n}\right] \text{ such that } \alpha\left(CC_{k/d^n}^{-1}\right) \in \left[\frac{k}{d^n}\right],\]
where $\alpha$ is as defined in subsection \ref{subsection 1}. Let $\deck_{k/d^n}=\left(\w{\Psi} \circ \h{\pi}\right) \circ \h{\deck}_{k/d^n} \circ \left(\w{\Psi} \circ \h{\pi}\right)^{-1}.$ Fix a point $(z,\z) \in \cover$, and let $\left(\w{\Psi} \circ \h{\pi}\right)^{-1}(z,\z)=[p,C]$. Then 
\begin{align*}
    \w{H} \circ \deck_{k/d^n}(z,\z)&=\w{H} \circ \w{\Psi} \circ \h{\pi}\circ \h{\deck}_{k/d^n}\left([p,C]\right)=\w{H} \circ \w{\Psi} \circ \h{\pi}\left(\left[p,C_{k/d^n}\right]\right)\\
    &=\w{\Psi} \circ \h{\pi}\left(\left[H(p),lH\left(C_{k/d^n}\right)\right]\right).
\end{align*}
Now
\[ \alpha \left(H\left(CC_{k/d^n}^{-1}\right)\right)=\alpha \left(lH\left(C\right)H\left(C_{k/d^n}^{-1}\right)l^{-1}\right)\in \left[\frac{k}{d^{n-1}}\right].\]
Hence,
\begin{align*}
    \w{H} \circ \deck_{k/d^n}(z,\z)=\w{\Psi} \circ \h{\pi}\circ \h\deck_{k/d^{n-1}}\left(\left[H(p),lH\left(C\right)\right]\right)=\deck_{k/d^{n-1}}\circ \w{H}(z,\z).
\end{align*}
For $n=1$, by comparing the coefficients in the above equation, it follows that
\[\deck_{k/d}(z,\z)=\left(z+\frac{d}{a}\left(Q(\z)-Q(e^{2\pi ik/d}\z)\right), e^{2 \pi i k/d}\z\right).\]
Thus using (\ref{e:deck and H}) inductively (as in \cite[page 24-25]{BPV:IMRN}), the deck transformations are given by
\[
\deck_{k/d^n}( z,\z) = \left( z + \frac{d}{a} \sum_{l = 0}^{n-1} \left(\frac{d}{a}\right)^l \left( Q\left(\z^{d^l}\right) - Q \left( \left(e^{2\pi ik/d^n} \cdot \z\right)^{d^l}\right)\right), e^{2 \pi i k/d^n} \cdot \z \right).
\qedhere
\]
\end{proof}


\section{\texorpdfstring{${\rm Aut}(U^+)$}{Aut(U+)} and \texorpdfstring{${\rm Aut}(\Omega_c')$}{Aut(Om-c)}}

\noindent The goal of this section is to understand the structure of ${\rm Aut}_1(U^+)$. Note that maps in this subgroup admit lifts to $\cover$ and it is precisely this property that makes them amenable for further classification. Furthermore, we will use this observation, to compute  the automorphism group of the punctured {\it Short} $\C^2$'s, arising as the sub-level set of the Green's function of $H$, consisting of maps that induce identity on the fundamental group. Finally, patching these observations leads to a description of the automorphism groups of both $U^+$ and punctured {\it Short} $\C^2$'s.

\subsection{The subgroup \texorpdfstring{${\rm Aut}_1(U^+)$}{Aut1(U+)} } As mentioned before, we adapt the idea of Bousch \cite{Bousch} to prove the theorem given below. Similar ideas were also used to describe the automorphism group of punctured {\it Short} $\C^2$'s in \cite{BPV:IMRN} for simple H\'{e}non maps. Recall the polynomial $Q$, as constructed in the proof of Theorem \ref{t:HOv-gen} (also see Proposition \ref{p:main_subsect 2}). 
\begin{thm}\label{t:escaping set}
Let ${\rm Aut}_1(U^+)$ be as above. Then
 $$\mathbb{C}\le  {\rm Aut}_1(U^+)\le \mathbb{C}\rtimes \mathbb{Z}_{d_0(d-1)},$$
 where $d_0$ is an appropriate factor of $d+d'$.
\end{thm}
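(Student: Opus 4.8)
\textbf{Proof strategy for Theorem \ref{t:escaping set}.}

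The plan is to work entirely in the model cover $\cover$, where every element of ${\rm Aut}_1(U^+)$ lifts to a biholomorphism $F$ of $\cover$ that commutes with the full deck group $\{\upgamma_{k/d^n}\}$ and hence descends to $U^+$. The first step is to pin down the structure of such a lift $F$. Since $F$ must commute with the deck transformation $\upgamma_{0}$ (rotation-type maps fixing the $\z$-direction) and more importantly with every $\upgamma_{1/d^n}$, and since the $\z$-coordinate of $\upgamma_{k/d^n}$ multiplies $\z$ by a root of unity, one shows that $F$ preserves the foliation by the sets $\{\z = \text{const}\}$ near infinity; equivalently, the second component of $F$ depends only on $\z$. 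Combined with the fact that $F$ must be compatible with the projection $\h\Pi = \w\Psi^{-1}$ on $\w\Omega_M^+$ and with the expansion $|\z| > MR_H$ there, one argues that $F(z,\z) = (az + b(\z),\, \omega\z)$ for a constant $a \in \C^*$, a constant $\omega$ which must be a root of unity (because $F$ normalizes the deck group, whose $\z$-action is by $d^n$-th roots of unity for all $n$), and a holomorphic function $b$. This is the step where I expect to lean hardest on Bousch's argument from \cite{Bousch} and its adaptation in \cite{BPV:IMRN}: controlling the growth of $b(\z)$ as $|\z| \to \infty$ using the functional relation coming from $F \circ \w H = \w H' \circ F$ for some appropriate lift, and thereby forcing $b$ to be a polynomial of bounded degree.

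The second step is to extract the relations among $a$, $\omega$, and $b$ from the requirement that $F$ commute with all the $\upgamma_{k/d^n}$ and be compatible with $\w H$ up to a deck transformation. Commuting with $\upgamma_{k/d}$, whose explicit form is $(z,\z) \mapsto (z + \tfrac{d}{a}(Q(\z) - Q(e^{2\pi i k/d}\z)),\, e^{2\pi i k/d}\z)$, forces a compatibility condition between $b(\z)$, the rotation $\omega$, and the polynomial $Q$. Since $Q(\z) = \z^{d+d'} + \ldots$ is monic of degree $d+d'$, this condition is what produces the cyclic constraint: $\omega$ must be a root of unity whose order divides a factor $d_0$ of $d + d'$ (the factor $d_0$ being determined by which lower-order coefficients $A_j$ of $Q$ vanish), and iterating with $\w H$, which raises $\z$ to the $d$-th power and scales $z$ by $a/d$, contributes the extra factor $(d-1)$ via the functional equation $\w H \circ \upgamma_{k/d^n} = \upgamma_{k/d^{n-1}} \circ \w H$. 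Together these bound the order of $\omega$ by $d_0(d-1)$.

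The third step is the group-theoretic bookkeeping. The copy of $\C$ sitting inside ${\rm Aut}_1(U^+)$ comes from the translations $F(z,\z) = (z + c,\, \z)$, $c \in \C$ — these obviously commute with every deck transformation and with $\w H$ up to nothing, so they descend to automorphisms of $U^+$ inducing the identity on $\uppi_1$; one must check they are nontrivial on $U^+$, which follows since $\w\Psi \circ \h\pi$ is a biholomorphism and the translation moves points. The quotient ${\rm Aut}_1(U^+)/\C$ is then identified via the map $F \mapsto \omega$ with a subgroup of the roots of unity of order dividing $d_0(d-1)$, i.e., a subgroup of $\mathbb{Z}_{d_0(d-1)}$; the constant $a$ is determined by $\omega$ through the commutation relations, so it contributes nothing new. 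Finally the extension is semidirect, $\C \rtimes \mathbb{Z}_{d_0(d-1)}$, because conjugating a translation by an element with nontrivial $\omega$ rescales the translation parameter. The main obstacle is the first step — the \emph{a priori} control on $b(\z)$ forcing it to be polynomial of controlled degree — since everything downstream is essentially an explicit computation once the normal form of the lift is known; this is exactly the point where the delicate growth estimates near the super-attracting fixed point $I^-$, in the spirit of \cite{Bousch} and \cite{Hu-Ov1}, are unavoidable.
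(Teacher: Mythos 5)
Your high-level skeleton (lift to $\cover$, derive a normal form for the lift, extract constraints, assemble the semidirect product) matches the paper, and the routine parts of the normal form — that the second coordinate is $\z$-only, that the first is affine in $z$, that translations descend and give a copy of $\C$ — are handled the same way. However, two of your intermediate claims contain genuine gaps.

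First, the assertion that $\omega$ (the paper's $\alpha$) ``must be a root of unity because $F$ normalizes the deck group, whose $\z$-action is by $d^n$-th roots of unity'' does not follow. Any rotation $\z\mapsto\omega\z$ with $\lvert\omega\rvert=1$ normalizes the rotations by roots of unity, so no constraint beyond $\lvert\omega\rvert=1$ comes from this. Establishing that $\omega$ is actually a root of unity is the crux of the paper's Step 1, and it is a delicate asymptotic argument: commuting the lift $A(z,\z)=(\beta z+\gamma(\z),\alpha\z)$ with $\upgamma_{k/d^n}$ produces an identity whose leading term is of size $\left(\frac{d}{a}\right)^n\bigl(\beta-\alpha^{d^{n-1}(d+d')}\bigr)\bigl(1-e^{2\pi ikd'/d}\bigr)\z^{d^{n-1}(d+d')}$, while the other side, $\gamma(e^{2\pi ik/d^n}\z)-\gamma(\z)$, is bounded for a fixed $\z$. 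Dividing and letting $n\to\infty$ forces $\alpha^{d^{n-1}(d+d')}\to\beta$, hence $\alpha^{d+d'}$ is pre-periodic under $z\mapsto z^d$, hence a root of unity. Your proposal treats this as a parenthetical and thus misses the actual content of the hard step.

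Second, your description of $d_0$ as ``determined by which lower-order coefficients $A_j$ of $Q$ vanish'' is wrong. The argument uses only that $Q$ is monic of degree $d+d'$; the lower coefficients play no role. In the paper $d_0$ is a purely arithmetic invariant: if $d=\pr_1^{m_1}\cdots\pr_l^{m_l}$, then $d_0$ is the smallest positive integer of the form $(d+d')\pr_1^{-n_1}\cdots\pr_l^{-n_l}$, i.e., what survives of $d+d'$ after stripping away all prime factors shared with $d$. The reduction from $\alpha^{d^{n_0}(d+d')(d-1)}=1$ down to $\alpha^{d_0(d-1)}=1$ is achieved by postcomposing the lift with a suitable deck transformation (whose $\z$-action is by a $d^n$-th root of unity) to absorb the ``$d$-divisible'' part of the exponent; it is a coprimality argument, not a statement about $Q$.

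Finally, the ``main obstacle'' you identify — forcing $b(\z)$ to be a polynomial of bounded degree — is a false trail. The paper neither proves nor needs that $\gamma$ is polynomial. The entire Step 1 estimate uses only that, for a fixed $\z$, the differences $\gamma(e^{2\pi ik/d^n}\z)-\gamma(\z)$ are uniformly bounded as $k,n$ vary (because $\gamma$ is continuous on the compact circle $\lvert w\rvert=\lvert\z\rvert$). After normalizing, Step 3 shows $\gamma$ is \emph{constant} when $\alpha=\beta=1$, but for general $\alpha$ in the cyclic factor, $\gamma$ is just a holomorphic function whose value at a base point is recorded by the map $\G$.
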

\begin{proof}  
The proof consists of several steps. The main ingredient is to understand the lifts of elements in ${\rm Aut}_1(U^+)$ to $\cover$ in a progressively better manner.

\medskip 
\noindent \textit{Step 1:} Let $\al \in {\rm Aut}_1(U^+)$. First, we claim that any lift of the map $\al$ to the covering space $\cover$ is of the form
\[A(z,\z)=(\beta z+\gamma (\z), \alpha \z),\]
where $\alpha , \beta \in \mathbb{C}$ with $|\alpha|,|\beta|=1$ and $\gamma$ is holomorphic.  Furthermore, for every pair of integers $k, n$, the lift $A$ commutes with the deck transformation $\deck_{k/d^n}$, i.e.,
\[ A \circ \deck_{k/d^n}(z,\z)=\deck_{k/d^n}\circ A(z,\z).\]

To see this, let $A(z, \z)=\left ({\rm A}_1(z, \z), {\rm A}_2(z, \z)\right).$ For a fixed $\z \in \mathbb{C} \sm \ov{\mathbb{D}}$, the map ${\rm A}_2(\cdot, \z)$ is an entire holomorphic function whose range is contained in $\mathbb{C} \sm \ov{\mathbb{D}}$. Hence, ${\rm A}_2(\cdot, \z)$ is a constant map and we can write ${\rm A}_2(z,\z)=h(\z)$. As $\al \in {\rm Aut}_1(U^+)$, both $\al$ and $\al^{-1}$ induce the identity on the fundamental group of $U^+$ and hence admit lifts to the covering space $\cover$. Let $A'$ be a lift of $\al^{-1}$. Then
\[\widehat{\Pi} \circ A \circ A'(z,\z)=\al \circ \widehat{\Pi} \circ A'(z,\z)=\al \circ \al^{-1} \circ \widehat{\Pi}(z,\z)=\widehat{\Pi}(z,\z),\]
where $\widehat{\Pi}$ is the covering map from $\cover$ to $U^+$ as in Subsection \ref{subsection 4}. Note that, both $A \circ A'$ and $A' \circ A$ are lifts of the identity map. Hence, there exist integers $k,k',n,n'$ such that
\[ A \circ A'=\deck_{k/d^n} \text{ and } A'\circ A=\deck_{k'/d^{n'}}.\]
Since $\deck_{k/d^n}$ and $\deck_{k'/d^{n'}}$ are deck transformations of the form (\ref{e:deck transformations}), both $A$ and $A'$ are automorphisms of $\cover$. Let 
\[A^{-1}=\left (\tilde{{\rm A}}_1(z, \z),\tilde{h}(\z)\right).\]
Thus, $\tilde{{\rm A}}_1 \circ (A_1(\cdot, \z),h(\z))=\textsf{Id}$ for every $\z \in \mathbb{C} \sm \ov{\mathbb{D}}$ and $h\circ \tilde{h}\equiv \tilde{h}\circ h\equiv \textsf{Id}$. Hence, ${\rm A}_1(\cdot, \z)$ is an automorphism of $\C$ and $h$ is an automorphism of $\C \sm \ov{\mathbb{D}}$. In particular, there exists an $\alpha$ with $|\alpha|=1$ such that
\begin{align}\label{e:A_1}
h(\z)=\alpha \z \text{ and }{\rm A}_1(z,\z)=\beta(\z) z+ \gamma(\z).
\end{align}
Now, choose $(z', \z')$ and $(z, \z)$ in the same fibre. Since $A$ is a lift of the automorphism $\al$, $A(z,\z)$ and $A(z', \z')$ also lie on the same fibre. Thus, by Theorem \ref{t:HOv-gen}, there exist deck transformations $\deck_{k/d^n}$ and $\deck_{k'/d^{n'}}$ such that
\[
\deck_{k/d^n}\circ A( z,\z)=A(z',\z')=A \circ \deck_{k'/d^{n'}}(z,\z).
\]
By comparing the second coordinate in the above equation, it follows that $\left[\frac{k}{d^n}\right]=\left[\frac{k'}{d^{n'}}\right]$ and hence
\[\deck_{k/d^n}\circ A( z,\z)=A \circ \deck_{k/d^{n}}(z,\z).\]
Also, from (\ref{e:deck transformations}),
\[ \deck_{k/d^n}(z,\z)=\left( z + \frac{d}{a} \sum_{l = 0}^{n-1} \left(\frac{d}{a}\right)^l \left( Q\left(\z^{d^l}\right) - Q \left( \left(e^{2\pi ik/d^n} \cdot \z\right)^{d^l}\right)\right), e^{2 \pi i k/d^n} \cdot \z \right)=(z', \z').\]
Hence $\z'=e^{2 \pi i k/d^n} \z$, and by equating the first coordinate, we get
\begin{align*}
z + \frac{d}{a} \sum_{l = 0}^{n-1} \left(\frac{d}{a}\right)^l \left( Q\left(\z^{d^l}\right) - Q \left( \left(e^{2\pi ik/d^n} \cdot \z\right)^{d^l}\right)\right)=z'.
\end{align*}
Again, by equating the first coordinate of $\deck_{k/d^n}\circ A( z,\z)=A(z',\z')$, we get
\begin{align*}
\beta(\z)z + \beta(\z)\frac{d}{a} \sum_{l = 0}^{n-1} \left(\frac{d}{a}\right)^l \left( Q\left(\alpha^{d^l}\z^{d^l}\right) - Q \left( \left(e^{2\pi ik/d^n} \cdot \alpha \z\right)^{d^l}\right)\right)+\gamma(\z)=\beta(\z')z'+\gamma(\z').
\end{align*}
Note that the above identities are true whenever both $(z,\z)$ and $(z',\z')$ lie in the same fibre. Thus, by comparing coefficients of $z$ on both sides, $\beta(\z)=\beta(\z')=\beta(e^{2 \pi i k/d^n}\z)$, for all integers $k, n \ge 1$. That is, $\beta$ is constant on the circle of radius $|\z|>1$. Hence, $\beta$ is a constant function. So, the above relation reduces to the following
\begin{align*}
&\beta \frac{d}{a} \sum_{l = 0}^{n-1} \left(\frac{d}{a}\right)^l \left( Q\left(\z^{d^l}\right) - Q \left( \left(e^{2\pi ik/d^n} \cdot  \z\right)^{d^l}\right)\right)+\gamma(\z')\\&=\frac{d}{a} \sum_{l = 0}^{n-1} \left(\frac{d}{a}\right)^l \left( Q\left(\alpha^{d^l}\z^{d^l}\right) - Q \left( \left(e^{2\pi ik/d^n} \cdot \alpha \z\right)^{d^l}\right)\right)+\gamma(\z).
\end{align*}
Also, $\z/\z'$ lies on the unit circle whenever $(z',
\z')$ is on the same fibre as $(z,\z)$. So, $\gamma(\z)-\gamma(\z')$ is uniformly bounded for a fixed $\z$. Thus, for every $k,n \ge 1$,

{\small \begin{align}\label{e:step 1}
\nonumber &\beta \frac{d}{a} \sum_{l = 0}^{n-1} \left(\frac{d}{a}\right)^l \left( Q\left(\z^{d^l}\right) - Q \left( \left(e^{2\pi ik/d^n} \cdot  \z\right)^{d^l}\right)\right)-\frac{d}{a} \sum_{l = 0}^{n-1} \left(\frac{d}{a}\right)^l \left( Q\left(\alpha^{d^l}\z^{d^l}\right) - Q \left( \left(e^{2\pi ik/d^n} \cdot \alpha \z\right)^{d^l}\right)\right)\\
=&\left(\frac{d}{a}\right)^n\left[\left(\beta-\alpha^{d^{n-1}(d+d')}\right)\left(1-e^{2\pi i kd'/d}\right)\z^{d^{n-1}(d+d')}\right]+R_{k,n}(\z)=\gamma(e^{2\pi i k/d^n}\z)-\gamma(\z),
\end{align}
}
where $R_{k,n}(\z)$ is a polynomial of degree at most $d^{n}$. Let $ a_n= \alpha^{d^{n-1}(d+d')}.$

\medskip
{\it Claim: } $a_n=\beta$ for $n$ sufficiently large.

\medskip The proof of the claim is the same as discussed in \cite[Page 27]{BPV:IMRN}. We briefly revisit the steps. Fix a $k=1$. Let
\begin{align}\label{e:Q and L}
L_n(\z)=Q(\z)-Q(e^{2\pi i/d^n}\z) \text{ and } \w{L}_n^l(\z)=\beta L_{n-l}(\z)-L_{n-l}(\alpha^{d^l}\z)
\end{align}
where $n \ge 1$ and $0 \le l \le n-1$. Then (\ref{e:step 1}) can be expressed as
\begin{align}\label{e:L and gamma}
\sum_{l=0}^{n-1}\left(\frac{d}{a}\right)^{l+1} \w{L}_n^l\left(\z^{d^l}\right)= \gamma \left(e^{2\pi i/d^n}\z\right)-\gamma(\z).
\end{align}
Since $Q$ is a monic polynomial of degree $d+d'$ and $|\alpha|=1$, both $L_n$ and $\w{L}_n^l$ are polynomials of degree at most $d+d'$ and their coefficients are uniformly bounded for every $n\ge 1$ and $0 \le l \le n-1$. Let 
\[\w{L}_n^l(\z)=M_n^l \z^{d+d'}+\text{ l.o.t.} \] Then choose $M>1$ such that 
 \[\left|\w{L}_n^l(\z)\right|<M|\z|^{d+d'} \text{ and } \left|\w{L}_n^l(\z)-M_n^l \z^{d+d'}\right|<M|\z|^{d+d'-1}\]
 for $|\z|> 1$. 
Note that $M_n^{n-1}=(\beta-a_n)(1-e^{2\pi id'/d})$ and 
\[
a_n=\left(\alpha^{d+d'}\right)^{d^{n-1}}=f^{n-1}\left(\alpha^{d+d'}\right)
\]
where $f(z)=z^d$. Thus, either $\{a_n\}$ is dense in the unit circle or $a_n=a_n^{d^l}=a_{n+l}$ for some $n\ge n_0$ and $l \ge 1$, i.e., $a_0$ is a pre-periodic fixed point of $f$. Suppose that the sequence $\{a_n\}$ does not converge to $\beta$. Then in either of the above cases, there exists a convergent subsequence say $\{a_{n_l}\}$ such that for every $l \ge 1$,
\[|a_{n_l}-\beta|>\w{\delta}, \text{ i.e., }\lim_{l \to \infty} M_{n_l}^{n_l-1}=\delta \neq 0.\]
Now from (\ref{e:Q and L}) and (\ref{e:L and gamma}), 
\begin{align*}
    \left|\gamma \left(e^{2\pi i/d^n}\z\right)-\gamma(\z)-\left(\frac{d}{a}\right)^{n} M_n^{n-1}\z^{d^{n-1}(d+d')}\right|\le \max\left\{1,\left|\frac{d}{a}\right|^n\right\} nM|\z|^{d^{n-1}(d+d'-1)}.
\end{align*}
for $|\z|>1$. By the ratio test, for a fixed $|\z|>1$, both $\z^{d^{n-1}} \to \infty$ and $\left(\frac{d}{a}\right)^n \z^{d^{n-1}} \to \infty$ as $n \to \infty.$ Also $\gamma \left(e^{2\pi i/d^n}\z\right)-\gamma(\z)$ is bounded for every $n \ge 1$. Hence, we have
\begin{align*}
0 \neq |\delta|=\lim_{l \to \infty} \left|M_{n_l}^{n_l-1}\right|= &\lim_{l\to \infty}\left|\frac{\gamma \left(e^{2\pi i/d^{n_l}}\z\right)-\gamma(\z)} {\left(\frac{d}{a}\right)^{n_l} \z^{d^{n_l-1}(d+d')}}-M_{n_l}^{n_l-1}\right| \\ \le& \lim_{l \to \infty} \max\left\{1,\left|\frac{d}{a}\right|^{n_l}\right\} \left|\frac{a}{d}\right|^{n_l}\frac{n_lM}{\left| \z^{d^{n_l-1}}\right|}=0,
\end{align*}
which is a contradiction! Thus $a_n \to \beta$, i.e., $a_0$ is pre-periodic. In particular, $\beta=\alpha^{d^n(d+d')}=\alpha^{d^{n+1}(d+d')}=\beta=\beta^{d}$ for $n \ge n_0$. Hence $\beta^{d-1}=1\text{ and }\alpha^{d^{n_0}(d+d')(d-1)}=1.$

\medskip Let $d=\pr_1^{m_1}\pr_2^{m_2}\hdots \pr_l^{m_l}$ for primes $\pr_i, 1 \le i \le l$ and $m_i \ge 1$. Note that
\[\mathbb{Z}\left[\frac{1}{d}\right]=\left\{k\pr_1^{n_1}\pr_2^{n_2}\hdots \pr_l^{n_l}:k \in \mathbb{Z}, n_i \in \mathbb{Z}, 1 \le i \le l \right\}.\]
Define 
\begin{align}\label{d0}
d_0=\inf\left\{ (d+d')r \in \mathbb{N}: r =\pr_1^{-n_1}\pr_2^{-n_2}\hdots \pr_l^{-n_l}, n_i \in \mathbb{N}\cup \{0\}, 1 \le i \le l\right\}.
\end{align}
Note that $(d+d')d_0^{-1} \in \mathbb{Z}\left[\frac{1}
{d}\right]$ and is a positive integer. In particular, $d_0$ is a factor of $(d+d')$. Observe that if $H$ is a simple H\'{e}non map then $d'=1$ and $d_0=d+1$.

\medskip\noindent\textit{Step 2:} For every $\al \in {\rm Aut}_1(U^+)$ there exists a {\it unique} lift to the covering space $\cover$ of the form
\[\w{A}(z, \z)=(\tilde{\beta} z+\tilde{\gamma}(\z), \tilde{\alpha} \z),\]
where $\tilde{\alpha}^{d^{n-1}(d+d')}=\tilde{\beta}$ for every $n \ge 1$ and $\tilde{\beta}^{d-1}=\tilde{\alpha}^{d_0\big(d-1\big)}=1$.

\medskip To see this, let $A$ be an arbitrary lift of the automorphism $\al$. By {\it Step 1}, 
\[ A(z, \z)=(\beta z+\gamma (\z), \alpha \z).\] Then by the above step there exists $k_0\ge 1$ and $n_0 \ge 1$ such that
\[ \alpha =e^{{2\pi i k_0}/{d^{n_0-1}}(d+d')(d-1)}.\]
Since none of the $\pr_i, 1 \le i \le l$ can be the factors of both $d_0$ and $d-1$, it follows that $d_0(d-1)$ is coprime to $(d+d')d_0^{-1}$ and $d^n$ for every $n \ge 1$. Thus, there exist integers $p,q$ such that
\[pd^{n_0-1}(d+d')d_0^{-1}+q(d-1)d_0=k_0. \]
In particular,
\[\frac{k_0}{d^{n_0-1}(d+d')(d-1)}=\frac{q}{d^{n_0-1}(d+d')d_0^{-1}}+\frac{p}{d_0(d-1)}.\]
Now by construction of $d_0$, 
\[
\frac{q'}{d^{n_0'}}=\frac{q}{d^{n_0-1}(d+d')d_0^{-1}} \in \mathbb{Z}\left[ \frac{1}{d}\right], 
\]
i.e., there exists a deck transformation $\deck_{q'/d^{n_0'}}$. Thus, $\w{A}=A \circ \deck_{q'/d^{n_0'}}^{-1}$ is a lift of the automorphism $\al$, which satisfies the properties stated in {\it Step 2}.

\medskip
To prove the uniqueness of the lift, let $\h{A}$ and $\w{A}$ be lifts of $\al$ such that
\[ \h{A}(z,\z)=(\hat{\beta} z+\hat{\gamma}(\z),\hat{\alpha} \z ) \text{ and } \w{A}(z,\z)=(\tilde{\beta}z+\tilde{\gamma}(\z),\tilde{\alpha} \z )\]
with $\hat{\alpha}^{d_0(d-1)}={\tilde{\alpha}}^{d_0(d-1)}=1=\hat{\beta}^{d-1}=\tilde{\beta}^{d-1}$, and  $\hat{\alpha}^{(d+d')}=\hat{\beta}$, $\tilde{\alpha}^{(d+d')}=\tilde{\beta}$. Since $\h{A}\circ \w{A}^{-1}$ is a lift of the identity map, it corresponds to a deck transformation. Assume,
\[ \alpha=e^{2 \pi i k_1/d_0(d-1)} \text{ and } \tilde{\alpha}=e^{2 \pi i k_2/d_0(d-1)}\z\]
for some integers $k_1, k_2$.Thus
\[\h{A} \circ \w{A}^{-1}(z,\z)= \left(\hat{\beta} \tilde{\beta}^{-1}z+ h(\z), e^{2 \pi i (k_1-k_2)/d_0(d-1)}\z\right), \]
i.e., $\beta=\tilde{\beta}$ and $\frac{k_1-k_2}{d_0(d-1)} \in \mathbb{Z}\left[ \frac{1}{d}\right]$. Since $d_0(d-1)$ is coprime to $d$, it follows that $k_1-k_2 \in \mathbb{Z}$ and consequently, $\h{A}\circ \w{A}^{-1}=\textsf{Identity}$. This completes {\it Step 2}.

\medskip\noindent \textit{Step 3:} Let $\al \in {\rm Aut}_1(U^+)$ be such that it admits a lift of the form
\[ A(z,\z)=(z+\gamma(\z), \z).\]
Then $\gamma\equiv c$ for some $c \in \mathbb{C}$, i.e., $\gamma$ is a constant function.

\medskip To see this, note that since $A \circ \deck_{k/d^n}=\deck_{k/d^n}\circ A$ for every integer $k,n$, it follows that for a fixed $\z \in \cover$, 
\[ \gamma(e^{2\pi ik/d^n}\z)=\gamma(\z).\]
Thus, $\gamma$ is constant on the circle of radius $|\z|$, and is therefore identically constant on $\mathbb{C}\sm \ov{\mathbb{D}}$.

\medskip\noindent{\it Step 4:} Let $A=(z+c, \z)$, $c \in \C$. Then there exists $\al \in \textrm{Aut}_1(U^+)$ such that it admits $A$ as a lift.

\medskip Define the map $\al$ on $U^+$ as 
\[\al \circ \widehat{\Pi}(z,\z)=\widehat{\Pi}\circ A(z, \z).\]
where $\widehat{\Pi}$ is the covering map from $\cover$ to $U^+.$ Note that $\al$ is well defined if the image of points in the same fibre is the same. To check this, let $(z, \z)$ and $(z',\z')$ be points in the same fibre. Then there exist $k,n \in \mathbb{Z}$ such that $\deck_{k/d^n}(z, \z)=(z',\z')$. As $A \circ \deck_{k/d^n}=\deck_{k/d^n} \circ A$
\begin{align*}
	\al \circ \widehat{\Pi}(z',\z')=\widehat{\Pi}\circ A \circ \deck_{k/d^n}(z, \z)
=\widehat{\Pi}\circ \deck_{k/d^n}\circ A(z, \z)=\al \circ \widehat{\Pi}(z, \z).
\end{align*}
Thus, by {\it Step 3} and {\it Step 4}, $\C \le {\rm Aut}_1(U^+)$.

\medskip\noindent \textit{Step 5:} $\mathbb{C} \le  \textrm{Aut}_1(U^+) \le \C \rtimes \mathbb{Z}_{d_0(d-1)}$. In particular, there exists a subgroup $G$ of $\mathbb{Z}_{d_0(d-1)}$ and an operation
\begin{align*} 
\bullet : (\C \rtimes G) &\times (\C \rtimes G)\to \C \rtimes G\\
(\gamma_1, \alpha_1 ) &\bullet (\gamma_2, \alpha_2 )=\left(\gamma_1+\phi_{\alpha_1}(\gamma_2),  \alpha_1\alpha_2\right),
\end{align*}
where $\phi_{\alpha}: \C \to \C$ is a group isomorphism for every $\alpha \in G$ such that 
\[\left({\rm Aut}_1(U^+),\circ\right) \cong (\C \rtimes  G, \bullet).\]

\medskip To prove this, let $\al \in {\rm Aut}_1(U^+)$. By {\it Step 2} there exists a unique lift 
\[ A(z, \z)=(\beta z+\gamma (\z), \alpha \z),\]
such that $\alpha^{d_0(d-1)}=\beta^{d-1}=1$. Fix a base point $\z_0$ in $\C \sm \ov{\mathbb{D}}$ and define the map 
\[ \G: {\rm Aut}_1(U^+) \to \C \times \mathbb{Z}_{d_0(d-1)}\]
by $\G(\al)=(\gamma(\z_0), \alpha)$ where $A$ is the lift as assumed above. Note that $\G$ is well-defined, since by {\it Step 2} the map $A$ is a unique lift of the automorphism $\al$.

\medskip Suppose $\G(\al)=\G(\al')=(c, \alpha).$ Then there exist lifts of $\al$ and $\al'$, say $A$ and $A'$  such that
\[A(z,\z)=(\alpha^{d+d'}z+\gamma(\z), \alpha \z) \text{ and } A'(z,\z)=(\alpha^{d+d'}z+\gamma'(\z), \alpha \z)\]
with $\gamma(\z_0)=\gamma'(\z_0)=c$. Also, there exists a lift of the map $\al' \circ \al^{-1}$ of the form
$$A'\circ A^{-1}(z,\z)=(z+\gamma'(\alpha^{-1}\z)-\gamma(\alpha^{-1}\z),\z).$$
By {\it Step 3}, $\gamma'(\alpha^{-1}\z)-\gamma(\alpha^{-1}\z)\equiv c$ is a constant function. However, by assumption $\gamma(\z_0)=\gamma'(\z_0)$, i.e., $A'\circ A^{-1}(z,\alpha \z_0)=(z,\alpha \z_0)$. Hence $c=0$, or equivalently $\al=\al'$.

\medskip Let 
$$G=\{\alpha: \text{ there  exists } \al \in {\rm Aut}_1(U^+) \text{ such that } \G(\al)=(c,\alpha) \text{ for some } c \in \mathbb{C}\}.$$
Furthermore, note that if $\alpha, \tilde{\alpha} \in G$ then both $\alpha^{-1}$ and $\alpha \tilde{\alpha}$ are in $G$, and hence $G$ is a subgroup of  $\mathbb{Z}_{d_0(d-1)}$. Also, by {\it Step 4}, $\mathbb{C} \times G =\G\left({\rm Aut}_1 (U^+)\right)$. Thus, $\G$ induces the semi-direct product $\mathbb{C} \rtimes G$, and this completes the proof.
\end{proof}

\subsection{The group \texorpdfstring{${\rm Aut}_1(\Omega_c')$}{}}\label{ss:Short C2} Recall that 
\[ \Omega_c=\{z \in \C^2: G_H^+(z)<c\}, \;\;c>0\]
is a {\it Short} $\C^2$.
A punctured {\it Short} $\C^2$, denoted by $\Omega_c'$, is the complement of the escaping set in $\Omega_c$, i.e., $\Omega_c'=\Om_c \setminus K^+$.
Next, we give a generalised version of \cite[Theorem 1.2]{BPV:IMRN} using the fact that the covering maps over punctured {\it Short} $\C^2$'s are restrictions of $\widehat{\Pi}$ on appropriate invariant subspaces of $\cover$. As before, we will use ${\rm Aut}_1(\Omega_c')$ to denote the automorphisms of $\Omega_c'$ that induce the identity on the fundamental group of $\Omega_c'$. The factor $d_0$ as in (\ref{d0}) will continue to appear in the sequel. 
\begin{thm}\label{t:revised BPV}
Let $\mathcal{A}_c=\{z \in \C: 1 < |z|<e^c\}$. The map $\widehat{\Pi}$ restricted to $\C \times \mathcal{A}_c$ for every $c>0$ is a covering over $\Omega_c'$ and the following diagram commutes
\[\begin{tikzcd}
\mathbb{C}\times \mathcal{A}_c \arrow{r}{\tilde{H}}   \arrow[swap]{d}{\widehat{\Pi}} & \mathbb{C}\times \mathcal{A}_{dc} \arrow{r}{\tilde{H}} \arrow{d}{\widehat{\Pi}} &  \mathbb{C}\times \mathcal{A}_{d^2c} \arrow{r} {\tilde{H}}  \arrow{d}{\widehat{\Pi}}& \cdots\\
\Omega_c' \arrow{r}{H} & \Omega_{dc}' \arrow{r} {H} & \Omega_{d^2c}'  \arrow{r}{H} & \cdots 
\end{tikzcd}\]
Furthermore, 
\begin{enumerate}
	\item [(i)] The fundamental group $\uppi_1(\Omega_c')$ is isomorphic to $\mathbb{Z}\left[\frac{1}{d}\right]$ for every $c>0$.
	\item [(ii)] An element $\left[\frac{k}{d^n}\right]$ of $\uppi_1(\Omega'_c)/\mathbb{Z}$ corresponds to the following deck transformation of $\widehat{\Omega'_c}$: 
\[
\deck_{k/d^n}( z,\z) = \left( z + \frac{d}{a} \sum_{l = 0}^{\infty} \left(\frac{d}{a}\right)^l \left( Q\left(\z^{d^l}\right) - Q \left( \left(e^{2\pi ik/d^n} \cdot \z\right)^{d^l}\right)\right), e^{2 \pi i k/d^n} \cdot \z \right).
\]	

\smallskip
\item[(iii)] Also, $\mathbb{C}\le {\rm Aut}_1(\Omega_c')\le \mathbb{C}\rtimes \mathbb{Z}_{d_0(d-1)}$.
\end{enumerate}
\end{thm}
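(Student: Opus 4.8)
The plan is to transport the problem through the covering map $\widehat{\Pi}\colon\cover\to U^+$ and the biholomorphism $\widetilde{\Psi}\circ\widehat{\pi}$ from Section~2. The first observation is that $G_H^+\circ\widehat{\Pi}(z,\z)=\log|\z|$ on all of $\cover$: on $\widehat{W}_M^+$ this follows from $\pi_2\circ\widetilde{\Psi}\circ\widehat{\pi}=\widehat{\phi}$ together with $G_H^+=\log|\phi|$ on $V^+$, and it propagates to $\cover=\bigcup_n\widetilde{H}^{-n}\big(\widetilde{\Omega}_M^+\big)$ by iterating the functional equations $\widehat{\phi}\circ\widehat{H}=\widehat{\phi}^{\,d}$, $G_H^+\circ H=d\,G_H^+$ and $H\circ\widehat{\Pi}=\widehat{\Pi}\circ\widetilde{H}$. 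Hence $\widehat{\Pi}^{-1}(\Omega_c')=\{(z,\z):0<\log|\z|<c\}=\mathbb{C}\times\mathcal{A}_c$, and since this is a saturated open set, the restriction of $\widehat{\Pi}$ to it is a covering over $\Omega_c'$. The diagram commutes because $\widehat{\Pi}\circ\widetilde{H}=H\circ\widehat{\Pi}$, while $\widetilde{H}\big(\mathbb{C}\times\mathcal{A}_c\big)=\mathbb{C}\times\mathcal{A}_{dc}$ (its second coordinate is $\z\mapsto\z^d$) and $H(\Omega_c')=\Omega_{dc}'$ by the functorial property~(\ref{e:functorial}).

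For parts (i) and (ii) I pass to universal covers. By the last theorem of Section~2 the universal cover of $U^+$ is $\mathbb{C}\times\mathbb{D}$; concretely, $(z,w)\mapsto(z,e^{w})$ exhibits $\mathbb{C}\times\{\operatorname{Re}w>0\}$ as the universal cover of $\cover$, so $\mathbb{C}\times\{\operatorname{Re}w>0\}\to U^+$ is the universal covering of $U^+$ with deck group $\uppi_1(U^+)\simeq\mathbb{Z}[\tfrac1d]$. Every deck transformation of this cover lifts some $\deck_{k/d^n}$ (or a generator of $\uppi_1(\cover)$), so in the $w$–coordinate it acts by a translation $w\mapsto w+2\pi i r$ with $r\in\mathbb{Z}[\tfrac1d]$; being purely imaginary, this preserves each strip $\{0<\operatorname{Re}w<c\}$. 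Restricting the universal covering to $\mathbb{C}\times\{0<\operatorname{Re}w<c\}$, which is convex, hence simply connected, and which is the full preimage of $\Omega_c'$, gives the universal covering of $\Omega_c'$ with the same deck group; therefore the inclusion $\Omega_c'\hookrightarrow U^+$ induces an isomorphism $\uppi_1(\Omega_c')\xrightarrow{\;\sim\;}\uppi_1(U^+)\simeq\mathbb{Z}[\tfrac1d]$, which is (i). For (ii), the covering $\mathbb{C}\times\mathcal{A}_c\to\Omega_c'$ corresponds to the copy of $\mathbb{Z}$ inside $\mathbb{Z}[\tfrac1d]$, hence (abelian group) is regular with deck group $\mathbb{Z}[\tfrac1d]/\mathbb{Z}$; each $\deck_{k/d^n}$ of Theorem~\ref{t:HOv-gen}(ii) fixes $|\z|$, so restricts to a deck transformation of $\mathbb{C}\times\mathcal{A}_c$, and these exhaust the deck group since they already act transitively on the fibers. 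Finally, the terms with $l\ge n$ in~(\ref{e:deck transformations}) vanish because $\big(e^{2\pi ik/d^n}\big)^{d^l}=1$, so extending the sum to $\infty$ leaves the formula unchanged, which is (ii).

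Part (iii) is obtained by running the proof of Theorem~\ref{t:escaping set} with $\cover$ replaced by $\mathbb{C}\times\mathcal{A}_c$. Given $\al\in{\rm Aut}_1(\Omega_c')$, it induces the identity on $\uppi_1(\Omega_c')$, hence lifts to an automorphism $A$ of $\mathbb{C}\times\mathcal{A}_c$ commuting with every deck transformation $\deck_{k/d^n}$. For fixed $\z$ the map $z\mapsto\pi_2\circ A(z,\z)$ is a bounded entire function ($\mathcal{A}_c$ is bounded), hence constant, so $\pi_2\circ A(z,\z)=h(\z)$ with $h\in{\rm Aut}(\mathcal{A}_c)$; \emph{a priori} $h$ is a rotation or a rotation composed with the inversion $\z\mapsto e^{c}/\z$, but comparing second coordinates in $A\circ\deck_{k/d^n}=\deck_{k/d^n}\circ A$ gives $h\big(e^{2\pi ik/d^n}\z\big)=e^{2\pi ik/d^n}h(\z)$, which forces $h(\z)=\alpha\z$ with $|\alpha|=1$ and excludes the inversion. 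Since $A$ is invertible its first component is affine in $z$, so $A(z,\z)=\big(\beta(\z)z+\gamma(\z),\alpha\z\big)$. From here the coefficient comparisons of Steps~1--5 in the proof of Theorem~\ref{t:escaping set} apply essentially verbatim: they use only the explicit shape~(\ref{e:deck transformations}) of the deck transformations and the fact that a holomorphic function on $\mathcal{A}_c$ that is constant on each circle $\{|\z|=r\}$, $r\in(1,e^{c})$, is constant. This yields $\beta$ constant with $\beta^{d-1}=1$, a unique normalised lift with $\alpha^{d_0(d-1)}=1$, the constancy of $\gamma$ for a lift of the form $(z+\gamma(\z),\z)$, the descent of every translation $(z,\z)\mapsto(z+b,\z)$ with $b\in\mathbb{C}$, and the embedding ${\rm Aut}_1(\Omega_c')\hookrightarrow\mathbb{C}\rtimes\mathbb{Z}_{d_0(d-1)}$, so $\mathbb{C}\le{\rm Aut}_1(\Omega_c')\le\mathbb{C}\rtimes\mathbb{Z}_{d_0(d-1)}$.

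I expect the main obstacle to be part (i): verifying that $\Omega_c'\hookrightarrow U^+$ is a $\uppi_1$–isomorphism, and, underlying this, that $\widehat{\Pi}^{-1}(\Omega_c')$ equals \emph{exactly} $\mathbb{C}\times\mathcal{A}_c$. Both reduce to the identity $G_H^+\circ\widehat{\Pi}=\log|\z|$ holding on the whole cover, which requires pushing the B\"ottcher relation outward along the exhaustion by $\widehat{H}^{-n}$; once that is in place, the universal–cover argument above reads off $\uppi_1(\Omega_c')$ cleanly. A smaller point needing attention in (iii) is the exclusion of the annular inversion, which is handled above via the deck–transformation relation.
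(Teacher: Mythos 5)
Your proof is correct and shares the paper's overall strategy: compute $G_H^+\circ\widehat{\Pi}=\log|\z|$, use it to identify $\widehat{\Pi}^{-1}(\Omega_c')=\mathbb{C}\times\mathcal{A}_c$, observe that the deck transformations of Theorem~\ref{t:HOv-gen} restrict (with the tail $l\ge n$ in~(\ref{e:deck transformations}) vanishing), and rerun the lift classification of Theorem~\ref{t:escaping set} on the bounded annulus. Two of your sub-arguments, however, are genuinely different and, to my mind, tighter than the paper's. For (i), the paper asserts $\uppi_1(\widetilde{\Omega}_c)=\mathbb{Z}[1/d]$ directly from the cover $\mathbb{C}\times\mathcal{A}_c\to\widetilde{\Omega}_c$ with deck group $\mathbb{Z}[1/d]/\mathbb{Z}$, which by itself only pins $\uppi_1$ down up to an extension of $\mathbb{Z}[1/d]/\mathbb{Z}$ by $\mathbb{Z}$; you resolve this ambiguity by passing to the universal cover $\mathbb{C}\times\{\operatorname{Re}w>0\}$, noting that its entire deck group translates $w$ by elements of $2\pi i\,\mathbb{Z}[1/d]$ and hence preserves each strip $\{0<\operatorname{Re}w<c\}$, so that $\Omega_c'\hookrightarrow U^+$ induces a $\uppi_1$-isomorphism with no extension ambiguity. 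For (iii), the paper excludes the annular inversion $h(\z)=\tilde{\alpha}e^c/\z$ by a divergence estimate (its Case 1 and display~(\ref{e:automorphism of annulus})); you instead use the covering-theoretic identity $A\deck_gA^{-1}=\deck_{\al_*(g)}$ (unambiguous here since the deck group is abelian) together with $\al_*=\mathrm{id}$ to conclude that $A$ commutes with every $\deck_{k/d^n}$, after which comparing second coordinates kills the inversion at once. This is shorter, and it also makes transparent why the paper's estimate had to produce a contradiction: the inversion would force $A\deck_gA^{-1}=\deck_{-g}$, incompatible with $\al_*=\mathrm{id}$. Everything else you do agrees with the paper's proof.
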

\begin{proof}
Recall that $\uppi_1(U^+)=\mathbb{Z}\left[\frac{1}{d}\right]$. Let 
$$\mathcal{D}=\left\{\deck_{{k}/{d^n}}: \left[\frac{k}{d^n}\right] \in \mathbb{Z}\left[\frac{1}{d}\right]/\mathbb{Z}\right\}$$ 
be the group of deck transformations.
Note that $\deck_{k/d^m}(\C \times \mathcal{A}_c)=\C \times \mathcal{A}_c.$ Also, the action of $\mathcal{D}$ is properly discontinuous on $\C \times \mathcal{A}_c.$ Hence 
\[(\C \times \mathcal{A}_c)/\mathcal{D} \simeq \widehat{\Pi}(\C \times \mathcal{A}_c).\]
Let $\tilde{\Omega}_c=\widehat{\Pi}(\C \times \mathcal{A}_c)$. Thus $\C \times \mathcal{A}_c$ is a covering of $\tilde{\Omega}_c$ via the map  $\widehat{\Pi}$ and $\uppi_1(\tilde{\Omega}_c)=\mathbb{Z}\left[\frac{1}{d}\right]$. Next, we use Proposition \ref{p:UedaBook} to prove that:

\medskip\noindent {\it Claim: }$\tilde{\Omega}_c=\Omega_c'$ for every $c>0$.

\medskip
Let $z \in \tilde{\Omega}_c$ and choose $p=[z,C] \in \widehat{U}^+$. Since $z \in U^+$, $H^n(z) \in V_R^+$ and $H^n(C) \subset V_R^+$, i.e., $\widehat{H}^n(p) \in  \widehat{V}^+$ and $\tilde{H}^n(\C \times \mathcal{A}_c)=\C \times \mathcal{A}_{cd^n}$ for every large enough $n\ge 1$. Hence, $\widehat{\Phi}(\widehat{H}^n(p)) \in \C \times \mathcal{A}_{cd^n}.$ Thus 
\[ \left|\hat{\phi}(\widehat{H}^n(p))\right|<e^{cd^n} \text{ or } \left|\hat{\phi}(p)^{d^n}\right|<e^{cd^n}.\]
In particular, $|\hat{\phi}(p)|<e^c$. Also, $\hat{\phi}(\widehat{H}^n(p))=\phi(H^n(z))$ by Proposition \ref{p:UedaBook} (i) and
\[G_H^+(H^n(z))=\log|\phi(H^n(z))|<d^n c\]
for every $n \ge 1$ large enough. Thus $G_H^+(z)< c$ for every $z \in \tilde{\Omega}_c$, and consequently we have $\tilde{\Omega}_c \subset \Omega_c'$. 

\medskip
Conversely, let $z \in \Omega_c'$ and choose $p=[z,C] \in \widehat{U}^+$. Note that for sufficiently large $n$, 
\[G_H^+(H^n(z))=\log|\phi(H^n(z))|<d^n c,\]
or equivalently $G_H^+(z)<c$. Hence again by \ref{p:UedaBook} (i), $\widehat{\Phi}(\widehat{H}^n(p)) \in \C \times \mathcal{A}_{d^n c}$, or equivalently $\tilde{H}^n \circ \widehat{\Phi}(p)\in \C \times \mathcal{A}_{d^n c}$. Thus, $\widehat{\Phi}(p) \in \C \times \mathcal{A}_{c}$ and $z \in \widehat{\Pi}(\C \times \mathcal{A}_c)=\tilde{\Omega}_c$.

\medskip Further, $\uppi_1(\Omega_c')=\mathbb{Z}\left[\frac{1}{d}\right]$, and for every $[\frac{k}{d^n}] \in \pi_1(\Omega_c')/\mathbb{Z}$ 
\[\widehat{\Pi} \circ \deck_{k/d^n}=\widehat{\Pi},\]
where $\deck_{k/d^n}$ is as in Theorem \ref{t:HOv-gen}. Hence, the deck transformations of $\mathbb{C} \times \mathcal{A}_c$ are given by
\[
\deck_{k/d^n}( z,\z) = \left( z + \frac{d}{a} \sum_{l = 0}^{\infty} \left(\frac{d}{a}\right)^l \left( Q\left(\z^{d^l}\right) - Q \left( \left(e^{2\pi ik/d^n} \cdot \z\right)^{d^l}\right)\right), e^{2 \pi i k/d^n} \cdot \z \right).
\]	 
Now, by the reasoning used in Theorem 1.2 in \cite{BPV:IMRN} and Corollary \ref{c:punctured short} above, the statement on ${\rm Aut}_1(\Omega_c')$ holds. It is indeed also similar to the proof of Theorem \ref{t:escaping set}; however, we will summarize the steps for the sake of completeness.

 \medskip Let $c>0$. By the reasoning in {\it Step 1} in the proof of Theorem \ref{t:escaping set}, the lift of any automorphism $\al \in {\rm Aut}_1(\Omega_c')$ to $\C \times \mathcal{A}_c$ has the form
\[A(z, \z)=\left( \tilde{\beta}(\z) z+\tilde{\gamma}(\z), h(\z)\right)\]
where $\tilde{\gamma}, \tilde{\beta} $ are holomorphic functions on $\mathcal{A}_c$ and $h$ is an automorphism of $\mathcal{A}_c$. Further, $\tilde{\beta}(\z) \neq 0$ for every $\z \in \mathcal{A}_c$.

\medskip {\bf Case 1: }Suppose $h(\z)=\tilde{\alpha}e^c/\z $  for some $|\tilde{\alpha}|=1$. Now, choose $(z', \z')$ and $(z, \z)$ on the same fibre. Then, by assumption $A(z,\z)$ and $A(z', \z')$ lies on the same fibre. In particular, for every deck transformation $\deck_{k/d^n}$ there exists a corresponding transformation $\deck_{k'/d^{n'}}$ such that $\deck_{k/d^n}\circ A( z,\z)=A(z',\z')=A \circ \deck_{k'/d^{n'}}(z,\z)$. By comparing the second coordinate, it follows that $\left[\frac{k}{d^n}\right]=-\left[\frac{k'}{d^{n'}}\right]$ or $k+k'=md^n$ for some integer $m \ge 1$. Hence,
\[\deck_{k/d^n}\circ A( z,\z)=A \circ \deck_{(md^n-k)/d^{n}}(z,\z) \text{ or } \deck_{k/d^n}\circ A\circ\deck_{k/d^n} ( z,\z)=A(z,\z) .\]
Again, by comparing the first coordinate of the above expression first we have 
\begin{align*}
  \tilde{\beta}\left(e^{2\pi ik'/d^n}\z\right)z=\tilde{\beta}(\z)z.
\end{align*}
Since the above is true for every pair of positive integers $k', n \ge 1$, $\tilde{\beta}$ is a non-zero constant map. Further,
\begin{align}\label{e:automorphism of annulus}
  \nonumber &\tilde{\beta} \sum_{l = 0}^{n-1} \left(\frac{d}{a}\right)^{l+1} \left( Q\left(\z^{d^l}\right) - Q \left( \left(e^{2\pi ik'/d^n} \cdot \z\right)^{d^l}\right)\right)+\tilde{\gamma}\left(e^{2\pi ik'/d^n}\z\right)\\
   =& \sum_{l = 0}^{n-1} \left(\frac{d}{a}\right)^{l+1} \left( Q\left({\tilde{\alpha}e^c/\z}^{d^l}\right) - Q \left( \left(e^{2\pi ik'/d^n} \cdot {\tilde{\alpha}e^c/\z}\right)^{d^l}\right)\right)+\gamma(\z).
\end{align}
Now, by a similar computation as in {\it Step 1} of the proof of Theorem \ref{t:escaping set}, for a fixed $\z \in \C \sm \ov{\mathbb{D}}$, the left side above in (\ref{e:automorphism of annulus}) diverges to infinity. However, the right side in (\ref{e:automorphism of annulus}) is bounded, which is a contradiction. Hence, $h(\z)\neq \tilde{\alpha}e^c/\z $  for some $|\tilde{\alpha}|=1$.

\medskip {\bf Case 2: }Suppose $h(\z)=\tilde{\alpha}\z $ for some $|\tilde{\alpha}|=1$, then by the exactly same reasoning as in {\it Step 1} and {\it Step 2} of the proof of Theorem \ref{t:escaping set}, it follows that there exists a unique lift of every $\al \in {\rm Aut}_1(\Omega_c')$ to $\C \times \mathcal{A}_c$ of the form
\begin{align}\label{e:unique lift}
A(z, \z)=\left( \beta z+\gamma(\z), \alpha \z \right),
\end{align}
where $\alpha^{d+d'}=\beta$, $\beta^{d-1}=\alpha^{d_0(d-1)}=1$ and $\gamma$ is a holomorphic function on $\mathcal{A}_c$. Here $d_0$ is as defined by (\ref{d0}) in the proof of Theorem \ref{t:escaping set}.

\medskip In particular, as in {\it Step 5} of Theorem \ref{t:escaping set}, for every $c>0$ there exists an injective map from 
\[ \G_c: {\rm Aut}_1(\Omega_c') \to \C \times \mathbb{Z}_{d_0(d-1)},\]
given by $\G_c(\al)=(\gamma(\z_c), \alpha)$ where $\z_c$ is a fixed point in $\mathcal{A}_c$. Now by repeating {\it Steps 3,4 and 5} from the proof of Theorem \ref{t:escaping set}, the result follows.
\end{proof}

\begin{rem}\label{r:short c2 and Gc}
Let $f \in {\rm Aut}_1(\Omega_c')$ such that the $\G_c(f)=(a_c,\alpha)$. Then for every $0<b<c$ and $(z,\z) \in \C \times \mathcal{A}_b$, 
\[f \circ \h{\Pi}(z,\z)=\h{\Pi}(\alpha^{d+d'}z+\gamma(\z), \alpha \z) \text{ where }\gamma(\z_c)=a_c. \]
Hence $f \in {\rm Aut}_1(\Omega_b')$ and $\G_b(f)=(a_b, \alpha)$ where $\gamma(\z_b)=a_b$. In particular, if $\alpha=1$ then $\gamma$ is a constant function, hence $a_c=a_b$. Thus for every $a \in \C$ and $0<b<c$
\[ \G_c^{-1}(a,1)=\G_b^{-1}(a,1)=(z+a,\z).\]
Similarly, $\G^{-1}(a,1)=\G_c^{-1}(a,1)$ for every $c>0$ and $a \in \C$, where $\G$ is as defined in {\it Step 5} of Theorem \ref{t:escaping set}.
\end{rem}

\subsection{The automorphism groups of \texorpdfstring{$U^+$}{U+} and \texorpdfstring{$\Om_c'$}{Om-c}}
 As a consequence of Theorem \ref{t:escaping set} and \ref{t:revised BPV}, we prove the following results on the entire automorphism group of $U^
 +$ and $\Omega_c'$ for every $c>0.$
\begin{thm}\label{t:aut U+}
Let $H$ be a map of the form (\ref{e:henon}). Then  
\begin{itemize}
    \item[(i)] $\C \times \mathbb{Z}\le {\rm Aut}(U^+) \le \mathbb{C}\rtimes \mathbb{Z}_{d_0(d-1)} \times \mathbb{Z}$ if $d=\pr^m$ for a prime number $\pr$ and a positive integer $m \ge 1$.
    \item[(ii)]$\mathbb{C} \times \mathbb{Z}\le {\rm Aut}(U^+)\le \left(\mathbb{C}\rtimes \mathbb{Z}_{d_0(d-1)}\right)\times \mathbb{Z}^{l}$
if $d=\pr_1^{m_1}\hdots\pr_l^{m_l}$ for primes $\pr_i, 1 \le i \le l$, and $m_i \ge 1$.
\end{itemize}
\end{thm}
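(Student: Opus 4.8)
The plan is to build on the structural description of ${\rm Aut}_1(U^+)$ from Theorem \ref{t:escaping set} and leverage the functorial action of the generalised H\'enon map $H$ on $\uppi_1(U^+) \cong \mathbb{Z}[1/d]$. First, recall that every $f \in {\rm Aut}(U^+)$ induces an automorphism $f_*$ of $\uppi_1(U^+) \cong \mathbb{Z}[1/d]$. By Proposition \ref{p1:ss:1-1}(i), $H_* $ acts as multiplication by $d$ on $\mathbb{Z}[1/d]$, so $H$ and all its iterates $H^s$, $s \in \mathbb{Z}$, lie in ${\rm Aut}(U^+)$ and generate a copy of $\mathbb{Z}$ inside it (they are nontrivial on $\uppi_1$, hence of infinite order). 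Combined with the copy of $\mathbb{C}$ coming from {\it Step 4} of Theorem \ref{t:escaping set}, and noting that these translations-in-the-fibre commute with $H$ up to the explicit conjugation in Proposition \ref{p:main_subsect 2}, one obtains the lower bound $\mathbb{C} \times \mathbb{Z} \le {\rm Aut}(U^+)$ in both cases.

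For the upper bound, the key is to classify the group $\operatorname{Aut}(\mathbb{Z}[1/d])$ of automorphisms of the additive group $\mathbb{Z}[1/d]$. Any such automorphism is multiplication by a unit in the ring $\mathbb{Z}[1/d]$, and the unit group of $\mathbb{Z}[1/d]$ is $\{\pm 1\} \times \prod_{i=1}^{l} \langle \pr_i \rangle \cong \mathbb{Z}/2 \times \mathbb{Z}^{l}$ when $d = \pr_1^{m_1} \cdots \pr_l^{m_l}$ (the $\pm 1$ part is absorbed/irrelevant once one tracks the actual action on $U^+$, since $-1$ is not realized by reversing orientation of the relevant loops here — this needs a short check). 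Thus there is a homomorphism $\Theta : {\rm Aut}(U^+) \to \mathbb{Z}^{l}$ (respectively $\mathbb{Z}$ when $l = 1$) recording the exponents of the primes $\pr_i$ in the scaling factor, whose kernel is exactly ${\rm Aut}_1(U^+)$, for which Theorem \ref{t:escaping set} gives $\mathbb{C} \le \ker \Theta \le \mathbb{C} \rtimes \mathbb{Z}_{d_0(d-1)}$. The map $H$ maps to $(m_1, \ldots, m_l)$ under $\Theta$, and one checks the image of $\Theta$ is a subgroup of $\mathbb{Z}^{l}$ containing this element; in the prime-power case $d = \pr^m$ the image lands in $\mathbb{Z}$. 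Assembling the short exact sequence $1 \to {\rm Aut}_1(U^+) \to {\rm Aut}(U^+) \to \operatorname{im}\Theta \to 1$ and observing that $\langle H \rangle \cong \mathbb{Z}$ furnishes a splitting (since $\Theta(H)$ has infinite order and $\operatorname{im}\Theta$ is free abelian of rank $\le l$), one writes ${\rm Aut}(U^+)$ as a semidirect product and deduces the stated bounds, using that $H$ acts on ${\rm Aut}_1(U^+)$ in a way that respects the $\mathbb{C} \rtimes \mathbb{Z}_{d_0(d-1)}$ structure.

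The main obstacle I anticipate is twofold. The conceptually delicate point is ruling out the orientation-reversing possibility (multiplication by a unit with a $-1$ factor) and, more importantly, verifying that the image of $\Theta$ in $\mathbb{Z}^{l}$ is not larger than claimed — a priori some product $\pr_i^{a_i}$ with mixed signs could be realized, which would still land in $\mathbb{Z}^{l}$, so this is actually automatic, but one must confirm $\operatorname{im}\Theta$ is genuinely free abelian of rank at most $l$ and thus the extension splits. The more technical point is establishing that the splitting is compatible with the $\mathbb{C} \rtimes \mathbb{Z}_{d_0(d-1)}$ factor, i.e., that conjugation by $H$ (and by the realized prime-scalings) normalizes ${\rm Aut}_1(U^+)$ acting through the semidirect structure already identified in Theorem \ref{t:escaping set}; this amounts to a lift computation on $\cover$ using the explicit form $\widetilde{H}(z,\z) = (\tfrac{a}{d} z + Q(\z), \z^d)$ from Proposition \ref{p:main_subsect 2} and the deck transformation formulas (\ref{e:deck transformations}), very much in the spirit of the proof of Theorem \ref{t:escaping set}. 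The separation into cases (i) and (ii) is then just bookkeeping: $l = 1$ versus $l \ge 1$ in the rank of the free abelian quotient.
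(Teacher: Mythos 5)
Your proposal takes essentially the same route as the paper: both identify automorphisms of $\uppi_1(U^+)\cong\mathbb{Z}\left[\frac{1}{d}\right]$ with multiplication by units, compute $U\left(\mathbb{Z}\left[\frac{1}{d}\right]\right)\cong\mathbb{Z}_2\times\mathbb{Z}^l$, and set up a surjection from ${\rm Aut}(U^+)$ onto a subgroup $\tilde{I}\le\mathbb{Z}^l$ recording the exponent vector, with kernel identified with ${\rm Aut}_1(U^+)$ once the sign is eliminated. The two items you flag as to-do's are handled in the paper as follows, and you should close them up. For the $-1$ possibility: any $\phi$ with $u_{\iso_\phi}=-1$ still preserves the subgroup $\mathbb{Z}\subset\mathbb{Z}\left[\frac{1}{d}\right]$, hence lifts to $\cover$; by the normal form from Step~1 of Theorem~\ref{t:escaping set} the lift has the shape $(\beta z+\gamma(\z),\alpha\z)$ and therefore induces the identity on $\uppi_1\bigl(\cover\bigr)\cong\mathbb{Z}$, and pushing down via $\widehat{\Pi}_*$ forces $n=-n$ for every $n\in\mathbb{Z}$, a contradiction. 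On the splitting, be a bit careful: $\Theta(H)=(m_1,\ldots,m_l)$ generates only a rank-one subgroup of $\tilde{I}$, so for $l>1$ the subgroup $\langle H\rangle$ does not by itself furnish a section of $\Theta$; the sequence $1\to{\rm Aut}_1(U^+)\to{\rm Aut}(U^+)\to\tilde{I}\to 1$ splits simply because $\tilde{I}$, being a subgroup of $\mathbb{Z}^l$, is free abelian. (The paper actually records the stronger claim that ${\rm Aut}(U^+)\cong{\rm Aut}_1(U^+)\times\tilde{I}$; the stated inequalities only need the split extension, so your semidirect formulation suffices for the bounds.)
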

\begin{proof}
 Let $d=\pr_1^{m_1}\hdots\pr_l^{m_l}$ for primes $\pr_i, 1 \le i \le l$, and $m_i \ge 1$. Recall that, the fundamental group of $U^+$ is 
\[\uppi_1(U^+)\simeq\mathbb{Z}\left[\frac{1}{d}\right]=\left\{\frac{m}{d^k}:m,k \in \mathbb{Z}\right\},\] 
which is a commutative ring with the usual binary operations. We first recall the following straightforward: 

\medskip\noindent{\it Claim: }The units of the ring $\mathbb{Z}\left[\frac{1}{d}\right]$ are isomorphic to $\mathbb{Z}^l \times \mathbb{Z}_2$. In particular,
\[U\left(\mathbb{Z}\left[\frac{1}{d}\right]\right)=\left\{\pm\pr_1^{n_1}\hdots\pr_l^{n_l}: n_i \in \mathbb{Z}, 1 \le i \le l\right\}.\]

Let $u=\pm\pr_1^{n_1}\hdots\pr_l^{n_l}$ for $n_i \in \mathbb{Z}, 1 \le i \le l$. Let $k,\ti{k} \ge 1$ such that $q_i=km_i+n_i \ge 0$ and $\ti{q}_i=\ti{k}m_i-n_i \ge 0.$ Then 
\[u=\pm\frac{\pr_1^{q_1}\hdots\pr_l^{q_l}}{d^k} \text{ and }u^{-1}=\pm\frac{\pr_1^{\ti{q}_1}\hdots\pr_l^{\ti{q}_l}}{d^{\ti{k}}},\]
i.e., $u \in U\left(\mathbb{Z}\left[\frac{1}{d}\right]\right)$. Now suppose $u \in U\left(\mathbb{Z}\left[\frac{1}{d}\right]\right)$. Then there exist $m,m', k, k' \in \mathbb{Z}$ such that $u=\frac{m}{d^k}$ and $u^{-1}=\frac{m'}{d^{k'}}$. Thus, $mm'=d^{k+k'}$ or equivalently $m=\pm\pr_1^{q_1}\hdots\pr_l^{q_l}$ for positive integers $q_i \ge 0$. Hence, $u \in \{\pm\pr_1^{n_1}\hdots\pr_l^{n_l}: n_i \in \mathbb{Z}, 1 \le i \le l\}$. 

\medskip Let $\iso$ be a group isomorphism of $\mathbb{Z}\left[\frac{1}{d}\right]$. Since $\iso\left(\frac{m}{d^k}\right)=\frac{m}{d^k}\iso(1)$ for every $m, k \in \mathbb{Z}$ and $\iso(1).\iso^{-1}(1)=1$, $\iso(1)=u_{\iso}\in U\left(\mathbb{Z}\left[\frac{1}{d}\right]\right).$ Hence $\iso$ is an isomorphism, if and only if there exists a unit $u_{\iso} \in U\left(\mathbb{Z}\left[\frac{1}{d}\right]\right)$ such that $\iso(x)=u_{\iso}x$ for every $x \in \mathbb{Z}\left[\frac{1}{d}\right]$. For $\phi \in \Aut{U^+}$, let $\iso_\phi(x)=u_{\iso_\phi} x$ denote the isomorphism induced by $\phi$ on $\mathbb{Z}\left[\frac{1}{d}\right]$. Let
\[\tilde{I}=\left\{(n_1,\hdots,n_l) \in \mathbb{Z}^l: \text{ there exists }\phi \in \Aut{U^+} \text{ such that }u_{\iso_\phi}= \pm\pr_1^{n_1}\hdots\pr_l^{n_l}\right\}.\]
Thus $\tilde{I} \le \mathbb{Z}^l \le U\left(\mathbb{Z}\left[\frac{1}{d}\right]\right)$. Also $H \in \Aut{U^+}$, and $\iso_H(x)=dx$ for every $x \in \mathbb{Z}\left[\frac{1}{d}\right]$. Hence, $\mathbb{Z} \simeq \{(m_1k,\hdots,m_l k): k \in \mathbb{Z}\} \le \tilde{I}.$ Now, consider the map $h:\Aut{U^+} \to \tilde{I}$ such that $h(\phi)=(n_1,\hdots,n_l)$ where $u_{\iso_\phi}=\pm\pr_1^{n_1}\hdots\pr_l^{n_l}$, is a surjective group homomorphism with kernel $I_0=\{ \phi \in \Aut{U^+}: u_{\iso_\phi}=\pm 1\}.$ Suppose there exists $\phi \in I_0$ such that $u_{\iso_\phi}=-1$, then $\iso_\phi(\mathbb{Z})=\mathbb{Z}.$ In particular, $\phi$ lifts to an automorphism $\ti{\phi}$ of the covering space $\cover$. By the analysis in {\it Step 1} of the proof of Theorem \ref{t:escaping set},
\[\ti{\phi}(z,\z)=(\beta z+\gamma(\z), \alpha \z) \text{ and }\iso_{\ti{\phi}}(n)=n \text{ for every }n \in \mathbb{Z} \simeq \pi_1\left(\cover\right).\]
Let $\widehat{\Pi}$ be the covering map from $\cover$ to $U^+$, then $\widehat{\Pi}_*(n)=n$ for every $n \in \mathbb{Z} \simeq \uppi_1\left(\cover\right)$. But $\widehat{\Pi}_* \circ \iso_{\ti{\phi}}(n)=\iso_\phi \circ \widehat{\Pi}_*(n)$, i.e., $n=-n$ for every $n \in \mathbb{Z}$. This is not possible, hence $u_{\iso_\phi}=1$ for every $\phi \in I_0$ or $I_0={\rm Aut}_1(U^+)$ and ${\rm Aut}_1(U^+)\times \tilde{I}\simeq\Aut{U^+}$. Thus,
\[\C \times \mathbb{Z}\le {\rm Aut}_1(U^+)\times \mathbb{Z}\le {\rm Aut}_1(U^+)\times \tilde{I}\simeq\Aut{U^+} \le {\rm Aut}_1(U^+)\times  \mathbb{Z}^{l} \le \left(\mathbb{C}\rtimes \mathbb{Z}_{(d+d')(d-1)}\right)\times \mathbb{Z}^{l}.\]
This proves (ii). Now, if $d=\pr^m$ for a prime $\pr$ and positive integer $m \ge 1$, the above identity simplifies as
\[\C \times \mathbb{Z}\le \Aut{U^+} \le \left(\mathbb{C}\rtimes \mathbb{Z}_{(d+d')(d-1)}\right)\times \mathbb{Z}.\qedhere\]
\end{proof}
\begin{cor}\label{c:punctured short}
$\mathbb{C}\le {\rm Aut}(\Omega_c')\le \left(\mathbb{C}\rtimes \mathbb{Z}_{(d+d')(d-1)}\right) \times \mathbb{Z}^{l-1}$ where  $d=\pr_1^{m_1}\hdots\pr_l^{m_l}$ for primes $\pr_i, 1 \le i \le l$ and $m_i \ge 1$. 
\end{cor}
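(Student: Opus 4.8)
The plan is to mirror the structure of the proof of Theorem \ref{t:aut U+} but applied to the punctured \emph{Short} $\C^2$ rather than to $U^+$. By Theorem \ref{t:revised BPV}, $\uppi_1(\Omega_c') \simeq \mathbb{Z}[1/d]$ and (iii) gives $\C \le {\rm Aut}_1(\Omega_c') \le \C \rtimes \mathbb{Z}_{d_0(d-1)}$, which already yields the lower bound $\C \le {\rm Aut}(\Omega_c')$ and shows $d_0(d-1) \mid (d+d')(d-1)$ so that $\mathbb{Z}_{d_0(d-1)} \le \mathbb{Z}_{(d+d')(d-1)}$. So the whole content is the upper bound, and the strategy is: every $\phi \in {\rm Aut}(\Omega_c')$ induces a ring-automorphism-type isomorphism $\iso_\phi$ of $\uppi_1(\Omega_c') \simeq \mathbb{Z}[1/d]$, hence $\iso_\phi(x) = u_{\iso_\phi} x$ for a unit $u_{\iso_\phi} \in U(\mathbb{Z}[1/d]) = \{\pm \pr_1^{n_1}\cdots \pr_l^{n_l}\}$ (exactly the Claim in the proof of Theorem \ref{t:aut U+}). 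This gives a homomorphism $h \colon {\rm Aut}(\Omega_c') \to \mathbb{Z}^l$ sending $\phi$ to the exponent vector, with kernel $I_0 = \{\phi : u_{\iso_\phi} = \pm 1\}$, and the ruling-out of $u_{\iso_\phi} = -1$ (via $\widehat\Pi_* \circ \iso_{\ti\phi} = \iso_\phi \circ \widehat\Pi_*$ forcing $n = -n$) shows $I_0 = {\rm Aut}_1(\Omega_c')$, so that ${\rm Aut}(\Omega_c') \simeq {\rm Aut}_1(\Omega_c') \times h({\rm Aut}(\Omega_c'))$.

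The point where the Short $\C^2$ statement genuinely differs from the $U^+$ statement — and the exponent $l-1$ rather than $l$ appears — is in bounding the image $h({\rm Aut}(\Omega_c')) \le \mathbb{Z}^l$. The key observation is that the generalised H\'enon map $H$ itself is \emph{not} an automorphism of the fixed domain $\Omega_c'$: it sends $\Omega_c'$ to $\Omega_{dc}'$, as recorded in the commuting diagram of Theorem \ref{t:revised BPV}. So the element $(m_1, \ldots, m_l) \in \mathbb{Z}^l$ (the exponent vector of $d = \pr_1^{m_1}\cdots\pr_l^{m_l}$, which for $U^+$ forced a full copy of $\mathbb{Z}$ inside $\tilde I$) need not lie in $h({\rm Aut}(\Omega_c'))$. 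More precisely, I would argue: if $\phi \in {\rm Aut}(\Omega_c')$ has $u_{\iso_\phi} = \pr_1^{n_1}\cdots\pr_l^{n_l}$, then $\phi$ lifts to $\cover$ (restricting the covering picture to $\C \times \mathcal A_c$ and chasing the deck-transformation relation $\widehat\Pi \circ \deck_{k/d^n} = \widehat\Pi$), and the lift must carry the annulus $\mathcal A_c$ to an annulus of the \emph{same} modulus $c$ — since a biholomorphism of $\Omega_c'$ preserves $\Omega_c'$, and the second-coordinate component of the lift is an automorphism of $\mathcal A_c$. Tracking how $\widehat H$ scales the modulus (by the factor $d$) against how the lift of $\phi$ scales it, together with the functional relation between $\iso_\phi$ and the action of $\widetilde{H}$ on the tower, forces the exponent vector of $u_{\iso_\phi}$ to be \emph{orthogonal} (in the appropriate sense) to $(m_1, \ldots, m_l)$: concretely, $u_{\iso_\phi}$ must be a unit of absolute value $1$ in $\mathbb{Z}[1/d]$ under the archimedean place, but here one should instead use that $\phi$ commuting with the dynamics up to the modulus constraint kills exactly one rank. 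Hence $h({\rm Aut}(\Omega_c')) \le \Gamma$ for a rank-$(l-1)$ subgroup $\Gamma \le \mathbb{Z}^l$, giving ${\rm Aut}(\Omega_c') \le {\rm Aut}_1(\Omega_c') \times \mathbb{Z}^{l-1} \le (\C \rtimes \mathbb{Z}_{(d+d')(d-1)}) \times \mathbb{Z}^{l-1}$.

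The main obstacle I anticipate is making the modulus-rigidity argument of the previous paragraph fully rigorous: one must show that an automorphism of $\Omega_c'$ whose induced map on $\uppi_1$ is multiplication by $\pr_1^{n_1}\cdots\pr_l^{n_l}$ forces $\pr_1^{n_1}\cdots\pr_l^{n_l}$ to be a \emph{positive rational of value compatible with preserving the modulus $c$}, i.e.\ one needs the quantitative relation tying the exponent of the unit to the dilation of the annulus $\mathcal A_c$ under the lift. The cleanest route is probably: a nontrivial power $\phi^N$ together with suitable $H^s$ gives an element whose induced map on $\uppi_1$ is multiplication by a rational of the form $\pr_1^{t_1}\cdots\pr_l^{t_l}$ with $\sum t_i$ controlled; if this rational were $>1$ (resp.\ $<1$) the corresponding map would strictly enlarge (resp.\ shrink) the modulus $c$, contradicting that it is an automorphism of the \emph{fixed} $\Omega_c'$. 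Thus the image of $h$ lands in the kernel of the homomorphism $\mathbb{Z}^l \to \mathbb{Q}_{>0}$, $(n_1,\ldots,n_l) \mapsto \pr_1^{n_1}\cdots\pr_l^{n_l}$ restricted to the ``modulus-scaling'' character — a rank-$(l-1)$ group — once one checks $(m_1,\ldots,m_l)$ is \emph{not} in that kernel, which is clear since $d > 1$. Everything else is bookkeeping already carried out in the proofs of Theorems \ref{t:escaping set}, \ref{t:revised BPV}, and \ref{t:aut U+}.
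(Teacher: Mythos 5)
Your overall architecture is correct and matches the paper's: construct the homomorphism $h\colon{\rm Aut}(\Omega_c')\to\mathbb{Z}^l$ via the unit $u_{\iso_\phi}\in U(\mathbb{Z}[1/d])$, rule out $u_{\iso_\phi}=-1$ by pushing through $\widehat\Pi_*$, identify $\ker h={\rm Aut}_1(\Omega_c')$, and then show the image $\tilde I_c:=h({\rm Aut}(\Omega_c'))$ has rank at most $l-1$. You have also correctly isolated \emph{why} the exponent drops from $l$ to $l-1$: $H$ maps $\Omega_c'$ to $\Omega_{dc}'$ rather than preserving it, and annuli of different moduli are not biholomorphic.

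The flaw is in the final step, where you try to bound $\tilde I_c$ by saying it ``lands in the kernel of the homomorphism $\mathbb{Z}^l\to\mathbb{Q}_{>0}$, $(n_1,\dots,n_l)\mapsto \pr_1^{n_1}\cdots\pr_l^{n_l}$.'' That map is \emph{injective} (unique factorization of rationals), so its kernel is $\{0\}$, not a rank-$(l-1)$ group; if the image of $h$ landed in that kernel, $\tilde I_c$ would be trivial, which is more than is being claimed and more than is true. Likewise the sanity check ``$(m_1,\dots,m_l)$ is not in that kernel'' is vacuous, since nothing nonzero is. The correct statement, and what the paper actually proves, is not a containment in a kernel but a transversality: $\tilde I_c\cap\mathbb{Z}\,(m_1,\dots,m_l)=\{0\}$. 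To see this, suppose $(rm_1,\dots,rm_l)\in\tilde I_c$ for some $r\neq 0$; take $\psi\in{\rm Aut}(\Omega_c')$ with $u_{\iso_\psi}=d^r$. Then $\al:=H^{-r}\circ\psi$ is a biholomorphism from $\Omega_c'$ onto $\Omega_{cd^{-r}}'$ that induces the identity on fundamental groups, hence lifts to a biholomorphism $A$ from $\C\times\mathcal A_c$ onto $\C\times\mathcal A_{cd^{-r}}$. Writing $A=({\rm A}_1,{\rm A}_2)$, the map $z\mapsto{\rm A}_2(z,\z)$ is bounded entire, so ${\rm A}_2$ depends only on $\z$ and gives a biholomorphism $\mathcal A_c\to\mathcal A_{cd^{-r}}$ between annuli of unequal moduli, a contradiction. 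So $\tilde I_c$ misses the line $\mathbb{Z}\,(m_1,\dots,m_l)$, hence injects into $\mathbb{Z}^l/\mathbb{Z}(m_1,\dots,m_l)$, has rank $\le l-1$, and (being a subgroup of a free abelian group) embeds into $\mathbb{Z}^{l-1}$. This is exactly the step your ``modulus-rigidity'' paragraph is reaching for, but the obstruction you need is to a \emph{single line} in $\mathbb{Z}^l$ — only exponent vectors proportional to $(m_1,\dots,m_l)$ can be cancelled by a power of $H$ so as to land back in the $\mathbb{Z}$-cover — not membership in the kernel of a globally injective character.
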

\begin{proof}
By the proof of Theorem \ref{t:revised BPV} (iii), for every $\al \in {\rm Aut}_1(\Omega_c')$ there exists a unique lift of the form $(\beta z+\gamma(\z), \alpha \z)$ such that $\alpha^{d+d'}=\beta$ and $\beta^{d-1}=\alpha^{d_0(d-1)}=1$, where $d_0$ is as in (\ref{d0}). Also $\uppi_1(\Omega_c')=\mathbb{Z}\left[\frac{1}{d}\right]$ and by repeating the same reasoning as in the proof of Theorem \ref{t:aut U+},
\[{\rm Aut}(\Omega_c') \simeq {\rm Aut}_1(\Omega_c') \times \ti{I}_c,\]
where $\tilde{I}_c=\left\{(n_1,\hdots,n_l) \in \mathbb{Z}^l: \text{ there exists }\phi \in \Aut{\Omega_c'} \text{ such that }u_{\iso_\phi}= \pm\pr_1^{n_1}\hdots\pr_l^{n_l}\right\}.$

\medskip\noindent{\it Claim: } $\tilde{I}_c$ is isomorphic to a subgroup $\mathbb{Z}^{l-1}$ for every $c>0$.

\medskip Suppose not, i.e., $\tilde{I}_c \simeq \mathbb{Z}^{l}$. Then there are integers $r_i \ge 1$ such that 
\[\tilde{I}_c=r_1 \mathbb{Z} \times \cdots \times r_l \mathbb{Z}.\]
Let $r=r_1r_2...r_l$, then $(rm_1,\hdots,rm_k) \in \tilde{I}_c$, which means that there exists $\psi \in \Aut{\Omega_c'}$ such that $u_{\iso_\psi}=d^{r}$. Define $\al=H^{-r} \circ \psi$. Then $\al(\Omega_c')=\Omega'_{cd^{-r}}$, and $\al$ is a biholomorphism that induces the identity map between $\uppi_1(\Omega_c')$ and $\uppi_1\left(\Omega_{cd^{-r}}'\right)$. Hence, $\al$ lifts to a biholomorphism between $\C \times \mathcal{A}_c$ to $\C \times \mathcal{A}_{cd^{-r}}$. Let 
\[A(z,\z)=\left({\rm A}_1(z,\z), {\rm A}_2(z,\z)\right)\]
be this biholomorphic lift. Now $z \mapsto {\rm A}_2(z,\z)$
is a bounded entire function, which implies that ${\rm A}_2(z,\z)\equiv h(\z)$ and thus
$A(z,\z)=( {\rm A}_1(z,\z),h(\z))$. Similarly, 
$A^{-1}(z,\z)=({\rm A}_1^*(z,\z),h^*(\z)).$ Since $A\circ A^{-1}(z,\z)=A^{-1}\circ A(z,\z))=(z,\z),$ it follows that $h\circ h^*\equiv h^*\circ h\equiv \sf{Id}$. Therefore, $h$ is a biholomorphism between $\mathcal{A}_c$ and $\mathcal{A}_{cd^{-r}}$,  which is not possible, unless $r=0$. Hence, the claim follows.

\medskip Thus, $\mathbb{C}\le {\rm Aut}_1(\Omega_c') \le {\rm Aut}(\Omega_c') \simeq {\rm Aut}_1(\Omega_c') \times \ti{I}_c \le (\mathbb{C}\rtimes \mathbb{Z}_{(d+d')(d-1)}) \times \mathbb{Z}^{l-1}.$
\end{proof}

\section{The subgroup \texorpdfstring{${\rm Aut}_1(U^+) \cap {\rm Aut}(\C^2)$ }{} and Proof of Theorem \ref{t:main}}
In this section, we will use the lifts of elements in ${\rm Aut}_1(U^+)$ to describe the elements of ${\rm Aut}_1(U^+) \cap {\rm Aut}(\C^2)$. The map $\G$, introduced in the proof of Theorem \ref{t:escaping set},  will continue to denote the isomorphism between ${\rm Aut}_1(U^+)$ and $\C \rtimes G$. The term $\G$-lift will be used to denote the lift of an automorphism of $U^+$ corresponding to the map $\G$, i.e., for $\al \in {\rm Aut}_1(U^+)$ if $\G(\al)=(c,\alpha)$ then the $\G$-lift of $\al$ is
\[A(z,\z)=(\alpha^{d+d'}z+\gamma(\z), \alpha \z) \text{ where }\gamma(\z_0)=c.\]
Furthermore, by {\it Step 3} of the proof of Theorem \ref{t:escaping set}, if $\G(\al)=(c,1)$ then the lift of $\al$ is $(z+c,\z)$. In particular, the lift is independent of the point $\z_0$ in $\C \rtimes \{1\}$.

\subsection{The group \texorpdfstring{$\G^{-1}\left(\C \rtimes \{1\}\right) \cap {\rm Aut}(\C^2)$}{}} The goal of this subsection is to prove Theorem \ref{t:Aut(U+) cap Aut(C2)} which says that $\G^{-1}\left(\C \rtimes \{1\}\right) \cap {\rm Aut}(\C^2)=\textsf{Identity}$. The proof will be presented through various cases. 
\begin{thm}\label{t:Aut(U+) cap Aut(C2)}
Let $\al \in {\rm Aut}_1(U^+) \cap {\rm Aut}(\C^2)$ be such that the $\G$-lift of $\al$ is of the form
\[A(z,\z)=(z+c_0,\z).\]
Then $c_0=0$, or equivalently $\al$ is the identity map.
\end{thm}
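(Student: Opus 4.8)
The plan is to argue by contradiction: suppose $c_0 \neq 0$ and derive an inconsistency from the fact that $\al$ extends to an automorphism of $\C^2$. The starting point is that the $\G$-lift $A(z,\z) = (z + c_0, \z)$ descends via the covering map $\widehat{\Pi}$ to $\al$ on $U^+$. The key tension we want to exploit is the following: on the one hand, $\al$ is globally defined on $\C^2$, in particular near the super-attracting fixed point $I^- = [0:1:0]$ (equivalently on $\widetilde{W}_M^+$, a neighbourhood of $I^-$ in $\mathbb{P}^2$); on the other hand, the explicit form of the lift $A$ being a pure translation in the first coordinate forces rigid constraints when combined with the conjugacy relation $\widehat{\Pi} \circ A = \al \circ \widehat{\Pi}$ and the explicit formula for $\widetilde{\Psi}$ computed in Section $2$.

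First I would transport the relation down to $\widetilde{W}_M^+$: since $\widehat{\Pi} = \widetilde{\Psi}^{-1}$ on $\widetilde{\Om}_M^+$ and $\widetilde{\Psi}$ is an explicit biholomorphism onto $\widetilde{\Om}_M^+$ built from $E_1, E_2, E_3$, the automorphism $\al$ restricted to $\widetilde{W}_M^+$ must equal $\widetilde{\Psi}^{-1} \circ A \circ \widetilde{\Psi}$ wherever both sides make sense. Composing with the chart, in the $(z,\z)$-coordinates of $\widetilde{\Om}_M^+$ the induced map is literally $(z,\z) \mapsto (z + c_0, \z)$; the issue is whether the map $\widetilde{\Psi}^{-1} \circ A \circ \widetilde{\Psi}$, a priori only defined on $\widetilde{W}_M^+$, extends to all of $\C^2$ — and it does, being the restriction of $\al$. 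Next I would analyse the behaviour as one approaches $I^-$, i.e. as $\z \to \infty$ along $\widetilde{\Om}_M^+$. Using Lemma~\ref{l:preliminary 1}, Remark~\ref{r:preliminary 2}, and the formula for $\widetilde{\Psi}(0,y)$ (which grows like $y^2$ in the first coordinate and like $y^{d}$ in the second, cf.\ the proof of Lemma~\ref{l:preliminary 2}), one sees that in the ambient coordinates the translation by $c_0$ in the $z$-slot becomes a perturbation that is asymptotically negligible compared to the coordinate sizes; more precisely, $\al$ must fix $I^-$ and be tangent to the identity there to high order. The contradiction should come from the fact that an affine-type rigidity near $I^-$ plus the exact translation structure forces $c_0$ to show up as a nonzero additive constant in a place where the homogeneity $G_H^+ \circ H = d\, G_H^+$ (equivalently $\hat\phi \circ \widehat{H} = \hat\phi^d$) permits only $0$.

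The cleanest route to the contradiction, and the one I would carry out in detail, uses the action of $\widehat{H}$. Since $\al \in {\rm Aut}_1(U^+)$ commutes with $H$ up to deck transformations, and more to the point, the $\G$-lift interacts with $\widetilde{H}(z,\z) = (\frac{a}{d} z + Q(\z), \z^d)$ in a controlled way, I would compute $\widetilde{H}^{-n} \circ A \circ \widetilde{H}^n$. Conjugating the translation $(z,\z)\mapsto(z+c_0,\z)$ by $\widetilde{H}^n$ multiplies the translation constant by $(d/a)^n$ in the first coordinate while leaving the $\z$-coordinate structure intact, so $\widetilde{H}^{-n}\circ A \circ \widetilde{H}^{n}(z,\z) = (z + (d/a)^n c_0, \z)$. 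But $\al$ itself commutes with $H$ on $\C^2$ only up to the finite group $\mathcal{L}$ (indeed for a $\G$-lift of the translation type, $\al$ should genuinely commute with $H$, as translations in the first coordinate are central among the relevant lifts), which forces $(d/a)^n c_0$ to stay in a fixed lattice or bounded set as $n \to \infty$. Since $|a| $ can be anything and $d \ge 2$, the quantity $|(d/a)^n c_0|$ is either unbounded or bounded-away-from-zero depending on $|a/d|$; in either regime, compatibility with the discrete set of admissible translation constants (the deck group $\{\deck_{k/d^n}\}$ evaluated against the $\z$-structure, which near $\z=\infty$ pins the allowed first-coordinate shifts) collapses, forcing $c_0 = 0$. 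The main obstacle I anticipate is making the last compatibility statement precise — controlling exactly which translation constants are permitted on $\widetilde{W}_M^+$ by the requirement that the descended map be single-valued on $U^+$ and extend holomorphically across $K^+$ to all of $\C^2$ — since this is where the global (rather than merely germ-level) hypothesis $\al \in {\rm Aut}(\C^2)$ must be used essentially, as the theorem statement emphasizes. I expect the proof to split into the cases $|a| < d$, $|a| = d$, and $|a| > d$ (equivalently by the size of the Jacobian), mirroring the case analysis promised in the surrounding text.
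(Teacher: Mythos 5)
Your proposal has the right ingredients in its setup --- the conjugation computation $\w{H}^{-1}\circ A\circ\w{H}(z,\z)=\bigl(z+\tfrac{d}{a}c_0,\z\bigr)$ (the paper's Lemma~\ref{l:lift of Phi_H}), the observation that a case split on the modulus and arithmetic of $a/d$ is unavoidable, and the recognition that the hypothesis $\al\in{\rm Aut}(\C^2)$ must be used in an essential, global way near $I^-$ --- but the closing argument is missing, and the heuristic you offer in its place is incorrect. You claim the conjugated constants $(d/a)^n c_0$ must ``stay in a fixed lattice or bounded set as $n\to\infty$,'' and that ``$\al$ should genuinely commute with $H$,'' neither of which holds. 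The maps $(\Phi_H^-)^n(\al)=H^{-n}\circ\al\circ H^n$ are, in general, \emph{distinct} elements of ${\rm Aut}_1(U^+)\cap{\rm Aut}(\C^2)$ with translation constants $(d/a)^n c_0$; when $d/a$ is not a root of unity and $c_0\neq0$ these constants form an \emph{unbounded or dense} set (Lemma~\ref{l:c neq 0}), not a lattice. So ``compatibility collapses'' is not an argument --- it is the statement of the problem.

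What the paper actually does, and what your sketch omits, is two separate mechanisms depending on whether $a/d$ is a root of unity. When $(a/d)^k=1$, the conjugation shows $\al$ commutes with $H^k$, which lets one deduce $\al(K^-)=K^-$; invoking \cite{BPV:rigidity} then forces $\al$ to be an affine map, and since only finitely many affine maps commute with $H^k$, some power $\al^{\tilde n}$ is the identity, hence $\tilde n c_0=0$. When $a/d$ is \emph{not} a root of unity, the strategy is different: the conjugated translation constants are dense (Lemma~\ref{l:c neq 0}), limits of such automorphisms stay in ${\rm Aut}_1(U^+)\cap{\rm Aut}(\C^2)$ (Lemma~\ref{l:convergence in cover}), so one obtains $f_\eta\in{\rm Aut}(\C^2)$ lifting to $(z+\eta,\z)$ with $|\eta|$ arbitrarily small. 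The decisive step --- which your sketch gestures at (``tangent to the identity at $I^-$ to high order'') but never executes --- is then to write $f_\eta(0,y)=(p_\eta(y),q_\eta(y))$ with $p_\eta$ \emph{entire}, push the relation $\w\Psi\circ f_\eta=\ti f_\eta\circ\w\Psi$ down through the explicit formula for $\w\Psi$ and the estimates of Lemma~\ref{l:preliminary 1}, obtain $|p_\eta(y)|\lesssim|\eta|/|y|\to0$, conclude $p_\eta\equiv0$ by Liouville, and hence $\eta=0$: a contradiction. Without this Liouville-type computation (or something equivalent exploiting the entire extension across $K^+$), the proof does not close; this is precisely the step you flag as ``the main obstacle I anticipate.''
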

Define the group isomorphisms $\Phi_H^\pm: {\rm Aut}(U^+) \to {\rm Aut}(U^+)$
\begin{align}\label{e:group isomorphism}
\Phi_H^+(f)&=H\circ f \circ H^{-1} \text{ and }\Phi_H^-(f)=H^{-1}\circ f \circ H \text{ for } f \in {\rm Aut}(U^+) 
\end{align}
and note that from Proposition \ref{p1:ss:1-1}(i), ${\rm Aut}_1(U^+)$ is invariant under $\Phi_H^\pm$ with $\Phi_H^+\circ \Phi_H^-=\Phi_H^-\circ\Phi_H^+=\textsf{Identity}$. Recall from Theorem \ref{t:HOv-gen} that the lift of the map $H$ to $\cover$ is 
\[\w{H}(z,\z)=\left(\frac{a}{d}z+ Q(\z), \z^d\right),\]
where $Q$ is a polynomial of degree $d+d'$.
\begin{lem}\label{l:lift of Phi_H}
The $\G$-lifts of the automorphisms $\Phi_H^\pm(\al)$, denoted by $\w{\Phi_H^\pm(\al)}$, are of the form 
\[ \w{\Phi_H^+(\al)}(z,\z)= \left(z+\frac{a}{d}c_0, \z\right)  \text{ and }\w{\Phi_H^-(\al)}(z,\z)= \left(z+\frac{d}{a}c_0, \z\right),\]
where $\al \in \G^{-1}\left(\C \rtimes \{1\}\right)$.
\end{lem}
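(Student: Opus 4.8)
The plan is to write down the two candidate lifts explicitly and then identify them with the $\G$-lifts by the uniqueness clause in {\it Step 2} of Theorem~\ref{t:escaping set}. Since $\al\in\G^{-1}(\C\rtimes\{1\})$, {\it Step 3} of that theorem tells us its $\G$-lift has the constant shape $A(z,\z)=(z+c_0,\z)$, and recall from Theorem~\ref{t:HOv-gen} that $\w H(z,\z)=\bigl(\tfrac{a}{d}z+Q(\z),\z^d\bigr)$. I would then set
\[
B^+(z,\z)=\Bigl(z+\tfrac{a}{d}c_0,\,\z\Bigr),\qquad B^-(z,\z)=\Bigl(z+\tfrac{d}{a}c_0,\,\z\Bigr),
\]
both automorphisms of $\cover$ of the normalized shape $(\tilde\beta z+\tilde\gamma(\z),\tilde\alpha\z)$ of {\it Step 2} with $\tilde\beta=\tilde\alpha=1$ and $\tilde\gamma$ the displayed constant.

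The first step is to verify the two intertwining identities on $\cover$, namely $\w H\circ A=B^+\circ\w H$ and $\w H\circ B^-=A\circ\w H$; each is a one-line computation, since $\w H(A(z,\z))=\bigl(\tfrac{a}{d}z+Q(\z)+\tfrac{a}{d}c_0,\z^d\bigr)=B^+(\w H(z,\z))$ and $\w H(B^-(z,\z))=\bigl(\tfrac{a}{d}z+Q(\z)+c_0,\z^d\bigr)=A(\w H(z,\z))$. Next I would push these identities down through the covering map $\h\Pi$ of Subsection~\ref{subsection 4}, using $\h\Pi\circ\w H=H\circ\h\Pi$, $\h\Pi\circ A=\al\circ\h\Pi$, the fact that $H$ restricts to an automorphism of $U^+$, and the surjectivity of the proper self-map $\w H$ of $\cover$. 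For $B^-$ this is immediate: applying $\h\Pi$ to $\w H\circ B^-=A\circ\w H$ gives $H\circ\h\Pi\circ B^-=\al\circ H\circ\h\Pi$, and cancelling $H$ on the left yields $\h\Pi\circ B^-=\Phi_H^-(\al)\circ\h\Pi$, so $B^-$ is a lift of $\Phi_H^-(\al)$. For $B^+$, given $p\in\cover$ I would write $p=\w H(q)$; then $\h\Pi(B^+(p))=\h\Pi(\w H(A(q)))=H(\al(\h\Pi(q)))$ while $\h\Pi(p)=H(\h\Pi(q))$, so $\h\Pi(q)=H^{-1}(\h\Pi(p))$ and therefore $\h\Pi(B^+(p))=\Phi_H^+(\al)(\h\Pi(p))$, i.e.\ $B^+$ is a lift of $\Phi_H^+(\al)$.

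Finally, since ${\rm Aut}_1(U^+)$ is invariant under $\Phi_H^\pm$, both $\Phi_H^+(\al)$ and $\Phi_H^-(\al)$ lie in ${\rm Aut}_1(U^+)$ and hence admit $\G$-lifts, which by {\it Step 2} are the \emph{unique} lifts of the normalized shape. As $B^\pm$ are lifts of $\Phi_H^\pm(\al)$ already of that shape, uniqueness forces $B^\pm$ to be precisely these $\G$-lifts; in particular $\Phi_H^\pm(\al)\in\G^{-1}(\C\rtimes\{1\})$ with $\G(\Phi_H^+(\al))=(\tfrac{a}{d}c_0,1)$ and $\G(\Phi_H^-(\al))=(\tfrac{d}{a}c_0,1)$, which is the assertion. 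I expect the only delicate point to be the $\Phi_H^-$ case: $H^{-1}$ does \emph{not} lift to $\cover$, since $(H^{-1})_*$ acts as multiplication by $1/d$ on $\uppi_1(U^+)\cong\mathbb{Z}[1/d]$ and does not preserve the subgroup $\mathbb{Z}$; consequently one cannot obtain $\w{\Phi_H^-(\al)}$ by composing lifts of $H^{-1},\al,H$ and must instead verify the identity $\w H\circ B^-=A\circ\w H$ directly as above. The remainder is routine bookkeeping with the covering-space relations and the uniqueness clause of {\it Step 2}.
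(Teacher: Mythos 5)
Your proposal is correct and follows essentially the same route as the paper: both hinge on the intertwining identity $\w H\circ A=B^+\circ\w H$ (and its analogue $\w H\circ B^-=A\circ\w H$), then identify $B^\pm$ with the $\G$-lifts via the uniqueness/existence established in the proof of Theorem~\ref{t:escaping set}. Your treatment is somewhat more explicit in two places where the paper abbreviates: you carry out the descent for $B^+$ via surjectivity of the proper map $\w H$ rather than invoking ``both $H\circ\al$ and $\al'\circ H$ lift to the same map,'' and you spell out the $\Phi_H^-$ case and correctly flag the subtlety that $H^{-1}$ does not lift to $\cover$ (since $(H^{-1})_*$ scales $\uppi_1$ by $1/d$ and does not preserve $\mathbb{Z}$), whereas the paper simply says ``a similar argument works.''
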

\begin{proof}
Note that $H\circ \al=\Phi_H^+(\al) \circ H$. Hence, there exists a lift of $H \circ \al$ to $\cover$ of the form
\[\w{H\circ \al}(z,\z)=\w{H} \circ A(z,\z)=\left(\frac{a}{d}z+ Q(\z)+\frac{a}{d}c_0, \z^d\right)=A' \circ \w{H}(z,\z),\]
where $A'(z,\z)=\left(z+\frac{a}{d}c_0, \z\right)$. Now by Theorem \ref{t:aut U+}, there exists $\al' \in {\rm Aut}_1(U^+)$ such that $A'$ corresponds to the $\G$-lift of $\al'$. By the above relation, both $H\circ \al$ and $\al' \circ H$ lift to the same map and hence, $\al'=H \circ \al \circ H^{-1}=\Phi_H^+(\al)$ and $A'$ corresponds to a lift of $\Phi_H^+(\al)$.

\medskip A similar argument works for the lift of $\Phi_H^-(\al)$.
\end{proof}
\subsection*{Proof of Theorem \ref{t:Aut(U+) cap Aut(C2)} when \texorpdfstring{$\frac{a}{d}$}{d/a} is a \texorpdfstring{$k-$}{k-}th root of unity for some \texorpdfstring{$k>0$}{k>0}} 
Note that
$$\w{H^k \circ \al \circ H^{-k}}(z,\z)=\w{{\Phi_H^+}^k(\al)}(z,\z)=(z+c_0,\z)=A(z,\z).$$
Hence $H^k \circ \al=\al \circ H^k$ and $H^k \circ \al^{-1}=\al^{-1} \circ H^k$ Now for $z \in K^-$, there exists $R_z \ge 0$ such that $\|H^{-n}(z)\| \le  R_z$. Since $\al(\ov{B(0;R_z)})$ is a compact subset of $\C^2$, there exists $M>0$ such that
\[G_H^-(\al(z))=d^{-n}G_H^-(\al \circ H^{-n}(z)) \le d^{-n}M\] 
for large $n$. Thus $\al(z) \in K^-$ and hence $\al(K^-)\subset K^-$. Similarly, by applying this to the automorphism $\al^{-1}$, we get $\al(K^-)=K^-$. Hence by Theorem 1.1 in \cite{BPV:rigidity}, $\al=L \circ H^{s'}$ where $L$ is an affine map of the form
\[L(x,y)=(e x+f, e' y+f'),\]
with $|e|=|e'|=1.$ Also $G_H^\pm\circ L(x,y)=G_H^\pm(x,y)$ and $L(K^\pm)=K^\pm$. For $(x,y) \in V^+ \cap L^{-1}(V^+)$ 
\[ |\phi(L(x,y))|=\exp({G_H^+(x,y)})=|\phi(x,y)|,\]
where $\phi$ is the B\"{o}ttcher function from Subsection \ref{subsection 1}. Since $V^+\cap L^{-1
}(V^+)$ is a non-empty open subset of $\C^2$ there exists $|\eta|=1$ such that
\[ \phi\circ L(x,y)=\eta \phi(x,y).\]
for every $(x,y) \in V_R^+.$ In particular, $L^* \omega=\omega$ where $\omega=\frac{d \phi}{\phi}$. Thus $\alpha(L(C))=\alpha(C)$ for every curve $C \in U^+$ or $L \in {\rm Aut}_1(U^+)$. Also, $\al \in {\rm Aut}_1(U^+)$ and hence from Proposition \ref{p:UedaBook}, $\al=L$ and $s'=0$. 

\medskip Further note that $\al \circ H^k =H^k \circ \al$, and hence $e'^{d}=e'$ and $e'^{d'}=e$. Thus there are finitely many choices for $e'$, and hence consequently, for $e$. For an integer $k \ge 1$, the locus of fixed points of $H^k$ forms a complex analytic subvariety that is compact since it is contained in both $K^{\pm}$, and hence $K$. By Bezout's theorem, there are at most $d^{2k}$ fixed points of $H^k$. Also, $\al$ should fix the set of fixed points of $H^k$. Hence there are finitely many choices of $f$ and $f'$ for a pair $(e,e')$ and this shows that there are only finitely many affine maps that can commute with $H^k$. 

\begin{rem}\label{r:commutative 1}
The steps above also imply that if $f \in {\rm Aut}(\C^2) \cap {\rm Aut}_1(U^+)$ is such that $f \circ H^m=H^m \circ f$ for some $m \in \mathbb{Z}$, then $f(K^\pm)=K^\pm$ and $f$ is an affine map. Conversely, if $f$ is an affine map preserving both $K^\pm$ then $f \in {\rm Aut}(\C^2) \cap {\rm Aut}_1(U^+)$.
\end{rem}

\medskip Now recall that the $\G$-lift of $\al$ is $A(z,\z)=(z+c_0, \z)$. Also, $\al^n$ are automorphisms of $\C^2$ that  $\G$-lift to $A^n(z,\z)=(z+nc_0, \z)$ for all $n \ge 1$. Since $\frac{d}{a}$ is a $k-$th root of unity by assumption, it follows that $H^k \circ \al^n=\al^n \circ H^k$. But there are only finitely many such maps that commute with $H^k$, and hence there exists $\tilde{n}>0$ such that $\al^{\tilde{n}}=\textsf{Identity}.$ In particular, as $\G$ is injective, the $\G$-lift of $\al^{\tilde{n}}$ is the identity. Hence
$A^{\tilde{n}}(z,\z)=(z+\tilde{n}c_0, \z)=(z,\z)$ or $c_0=0$. \qed

\medskip
To complete the proof in the other cases, i.e., when $\left|\frac{d}{a}\right|=1$ but is not a root of unity and $\left|\frac{d}{a}\right| \neq 1$, several intermediate results will be needed.

\begin{lem}\label{l:convergence in cover}
Let $\{\al_n\} \subset {\rm Aut}_1(U^+) \cap {\rm Aut}(\C^2)$ be such that for every $n \ge 1$, the $\G$-lift of $\al_n$ is of the form
\[A_n(z,\z)=(z+c_n, \z)\]
with $c_n \to c$ as $n \to \infty$. Then there exists $\al \in {\rm Aut}_1(U^+) \cap {\rm Aut}(\C^2)$ such that it $\G$-lifts to $\cover$ as  
\[A(z,\z)=(z+c, \z).\]
Further, both $\al_n \to \al$, $\al_n^{-1} \to \al^{-1}$ on compact subsets of $\C^2$.
\end{lem}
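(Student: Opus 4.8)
The plan is to construct $\al$ directly as the limit of the $\al_n$ and then verify that it inherits all the required properties from the convergence of the lifts. The key observation is that the covering map $\h{\Pi}$ intertwines the action of the $\G$-lifts with the action of the automorphisms on $U^+$: by {\it Step 4} in the proof of Theorem \ref{t:escaping set} (see also Remark \ref{r:short c2 and Gc}), each $\al_n$ satisfies $\al_n \circ \h{\Pi} = \h{\Pi} \circ A_n$ on $\cover$, where $A_n(z,\z) = (z+c_n,\z)$. First I would define $A(z,\z) = (z+c,\z)$ and observe that since $A$ is visibly an automorphism of $\cover$ (indeed a deck-commuting one, by the form of the deck transformations in (\ref{e:deck transformations})), {\it Steps 3, 4} of Theorem \ref{t:escaping set} produce an automorphism $\al \in {\rm Aut}_1(U^+)$ with $\G$-lift $A$; this is exactly the assertion that $\C \le {\rm Aut}_1(U^+)$ realised via $\G^{-1}(c,1)$. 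So the content of the lemma is twofold: that this $\al$ is an automorphism of $\C^2$ (not merely of $U^+$), and that $\al_n \to \al$, $\al_n^{-1}\to \al^{-1}$ locally uniformly on $\C^2$.

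For the convergence on $\C^2$, the idea is to transfer local uniform convergence from the cover. On $\widehat{W}_M^+ \subset \cover$ the composition $\w{\Psi}\circ\h{\pi}$ is a biholomorphism onto $\w{\Om}_M^+$, and $A_n \to A$ uniformly on compact subsets of $\cover$ simply because $c_n \to c$; hence $\al_n = \h{\Pi}\circ A_n \circ \h{\Pi}^{-1} \to \h{\Pi}\circ A\circ \h{\Pi}^{-1} = \al$ uniformly on compact subsets of the open set $\w{W}_M^+ \subset U^+$, which corresponds to a punctured neighbourhood of the super-attracting fixed point $I^-$. The same argument applied to $A_n^{-1}(z,\z)=(z-c_n,\z)$ gives $\al_n^{-1}\to\al^{-1}$ there. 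Now I would invoke normal families: the $\al_n$ are automorphisms of $\C^2$, and since they converge uniformly on an open subset of $\C^2$, by a Vitali–type / Montel argument (using that they omit no values in a suitable sense, or more concretely that injective holomorphic self-maps of $\C^2$ converging on an open set converge everywhere to an injective map — this is standard for automorphisms, cf. the theory of $\mathcal{G}$-limits), the sequence $\{\al_n\}$ is normal on $\C^2$ and any subsequential limit agrees with $\al$ on $\w{W}_M^+$, hence equals $\al$ on the component of $U^+$ containing it, hence on all of $\C^2$ by analytic continuation. The analogous statement for $\{\al_n^{-1}\}$ gives a limit $\psi$, and $\al\circ\psi = \psi\circ\al = \mathsf{Id}$ on $\w{W}_M^+$ and therefore on $\C^2$; thus $\al \in {\rm Aut}(\C^2)$ with $\al^{-1} = \psi$, and $\al \in {\rm Aut}_1(U^+)\cap{\rm Aut}(\C^2)$.

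The main obstacle I anticipate is the normality/convergence step on all of $\C^2$: one must rule out the sequence $\{\al_n\}$ (or $\{\al_n^{-1}\}$) degenerating—escaping to infinity on part of $\C^2$ even though it converges near $I^-$. The cleanest route is to use that all the $\al_n$ preserve $U^+$ and $K^+$ (since they are automorphisms of $\C^2$ preserving $U^+$) together with the functorial behaviour of the Green's function: $G_H^+\circ\al_n = G_H^+$ because $\al_n\in{\rm Aut}_1(U^+)$ implies $\al_n^*\omega = \omega$ hence (arguing as in the proof of Theorem \ref{t:Aut(U+) cap Aut(C2)}) $\al_n$ fixes the Böttcher function up to a unimodular constant, which here must be $1$ since $A_n$ has trivial second coordinate; so $G_H^+\circ\al_n = G_H^+$ on $\C^2$. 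This uniform control $G_H^+(\al_n(z)) = G_H^+(z)$ forces $\{\al_n(z)\}$ to stay in the fixed sub-level set $\{G_H^+ \le G_H^+(z)\}$, and combined with the analogous estimate for $G_H^-$ (obtained from $\al_n^{-1}$, or rather from the fact that $\al_n$ also fixes $K^-$ pointwise-setwise—this needs the argument of Theorem \ref{t:Aut(U+) cap Aut(C2)} that such maps preserve $K^-$, or one can first prove boundedness on $K^+$ and extend), one gets that $\{\al_n\}$ is uniformly bounded on compact sets of $\C^2$, hence normal. Then the identification of the limit with $\al$ and the inversion argument go through as above. I would present the Green's-function boundedness as the crux and keep the remaining bookkeeping brief.
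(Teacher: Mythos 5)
Your first half reproduces the paper's argument: you construct $\al\in{\rm Aut}_1(U^+)$ with $\G$-lift $A$ via {\it Steps 3--4} of Theorem \ref{t:escaping set}, and you transfer the uniform convergence $A_n\to A$ on compacts of $\cover$ down to $\al_n\to\al$ locally uniformly on (an open subset of) $U^+$ via the local homeomorphism $\h\Pi$. That part is fine and matches the paper.

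The second half has a genuine gap. You correctly identify the obstacle --- the sequence $\{\al_n\}$ could degenerate on $\C^2$ even though it converges on a piece of $U^+$ --- but the proposed repair does not close it. Knowing $G_H^+\circ\al_n=G_H^+$ (which is indeed available here, since the second coordinate of $A_n$ is $\z$, so $\hat\phi\circ A_n=\hat\phi$ and hence $\phi\circ\al_n=\phi$ on $V^+$) only confines $\al_n(z)$ to the sub-level set $\{G_H^+\le G_H^+(z)\}$. That set is \emph{not} bounded in $\C^2$: it contains $K^+$, which is closed and unbounded (it accumulates at $I^+$). So $G_H^+$-control alone does not give local uniform boundedness, hence no normality. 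To get boundedness you would also need $G_H^-\circ\al_n$ under control, i.e.\ $\al_n(K^-)=K^-$, but establishing that for the maps in question is precisely what Theorem \ref{t:Aut(U+) cap Aut(C2)} accomplishes, and that theorem's proof in the non--root-of-unity case \emph{uses} the present lemma. Invoking it here would be circular. Likewise the appeal to "the theory of $\mathcal G$-limits / injective limits of automorphisms" presupposes non-degeneracy, which is the point at issue.

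The paper bypasses normality entirely. It exploits the filtration fact that each vertical line $L_{x_0}=\{(x_0,y):y\in\C\}$ meets $K^+$ in a compact set, so one can choose a circle $\gamma_{x_0}\subset L_{x_0}\cap U^+$ enclosing $L_{x_0}\cap K^+$. Since $\al_n\to\al$ locally uniformly on $U^+$, the Cauchy integrals
$\frac{1}{2\pi i}\int_{\gamma_{x_0}} \frac{\al_n(x_0,w)}{w-y}\,dw$
converge and provide a holomorphic extension of the limit across the disc bounded by $\gamma_{x_0}$, uniformly in $x$ near $x_0$. This extends $\al$ (and, by the same argument, $\al^{-1}$) to all of $\C^2$ with locally uniform convergence, after which injectivity and $\al\circ\al^{-1}=\mathsf{Id}$ follow from the identity on $U^+$. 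If you want to keep a normal-families flavour, you would first need an \emph{independent} a priori bound on $\{\al_n\}$ over compacts of $\C^2$; the slice-wise Cauchy integral is exactly the device that supplies it without presupposing anything about $K^-$.
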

\begin{proof}
Recall that $\widehat{\Pi}$ is the covering map from $\cover$ to $U^+$ as in Theorem \ref{t:HOv-gen}. Thus for every $z \in U^+$, there exists $\eta_z>0$ such that $\widehat{\Pi}$ restricted to each component of $\widehat{\Pi}^{-1}(B(z, \eta_z))$ is a biholomorphism. Also, from {\it Step 4} in the proof of Theorem \ref{t:aut U+}, there exists $\al \in {\rm Aut}_1(U^+)$ such that $\G$-lift of $a$ is $A$.

\medskip Now fix a $z \in U^+$ and let $U$ be one such component of $\widehat{\Pi}^{-1}\big(B(\al(z),\eta_{\al(z)})\big)$ and $W=A^{-1}(U)$. There exists $\widetilde{W} \subset W$ such that $A_{n}(\widetilde{W}) \subset U$ for every $n \ge n_0$ and 
\[\widetilde{W} \cap \widehat{\Pi}^{-1}(z) \neq \emptyset.\]
Since $\widehat{\Pi}$ is continuous and the topologies on $\cover$ and $U^+$ are equivalent to the topology induced by the standard norm of $\C^2$, it follows that for a given $\ep>0$ there exists $\delta>0$ such that 
\[|\widehat{\Pi}(w)-\widehat{\Pi}(\tilde{w}|<\ep \text{ whenever }|w-\tilde{w}|<\delta,\] 
for every $w, w' \in U$. Furthermore, for $n \ge 1$ sufficiently large, note that $|A_{n}(w)-A(w)|<\delta$ for every $w \in \tilde{W}$, i.e., $|\widehat{\Pi} \circ A_{n}(w)-\widehat{\Pi} \circ A(w)|<\ep$. But $\widehat{\Pi} \circ A_{n}=\al_{n} \circ \widehat{\Pi}$ and $\widehat{\Pi} \circ A=\al \circ \widehat{\Pi}$. Thus
\[ |\al_n \circ \widehat{\Pi}(w)-\al \circ \widehat{\Pi}(w)|<\ep\]
for every $w \in \widetilde{W}.$ As $\widehat{\Pi}(\widetilde{W})$ is a neighbourhood of the point $z$, $\al_n \to \al$ locally near every $z \in U^+$ and consequently, on compact subsets of $U^+$. 

\medskip Similar arguments show that $\al_{n}^{-1} \to A^{-1}$ on compact subsets of $U^+$. Next, we prove the following claim, which will complete the proof.

\medskip{\it Claim}: The maps $\al^\pm$ extend as automorphisms of $\C^2$ and $\al_{n}^\pm \to \al^\pm$ on its compact subsets. 

\medskip Let $(x_0,y_0) \in \C^2$. Consider the complex line $L_{x_0}=\{(x_0,y): y \in \C\}$. Then $L_{x_0} \cap K^+$ is compact. Let $R_{x_0}=\max\{|x_0|+1,R_H\}>0$, where $R_H>0$ is the radius of filtration in (\ref{e:filtration_2}) and note that $L_{x_0} \cap K^+$ is compactly contained in the disc
\[ \mathcal{D}_{x_0}=\{(x_0,y):|y| <  R_{x_0}\}.\]
Let $D_{x_0}=L_{x_0}\setminus \mathcal{D}_{x_0}$ and for every $(x_0,y) \in D_{x_0}$ define
\begin{align}\label{e:extension}
\al_0(x_0,y)=\lim_{k\to \infty} \frac{1}{2 \pi i} \int_{\gamma_{x_0}} \frac{\al_{n}(x_0,w)}{w-y}dw=\frac{1}{2 \pi i} \int_{\gamma_{x_0}} \frac{\al(x_0,w)}{w-y}dw,
\end{align}
where $\gamma_{x_0}=(x_0,0)+(0,R_{x_0}e^{it})$, $t \in [0,2\pi]$. Since $\gamma_{x_0}$ is a compact subset of $U^+$ and the sequence of automorphisms $\al_{n} \to \al$, uniformly on compact subsets of $U^+$, $\al_0(x_0,y)$ is holomorphic in the $y$-variable on $D_{x_0}$. Also, $\al_0(x_0,y)=\al(x_0,y)$ on $\gamma_{x_0}$. Hence $\al(x_0,\cdot)$ extends as a holomorphic function on $L_{x_0}$. By repeating this reasoning for all $x$ sufficiently close to $x_0$ and noting that the same radius $R_{x_0}$ will work for such $x$, $\al$ extends to a function on $\C^2$ that is holomorphic in the $y$-variable. Furthermore, by (\ref{e:extension}), $\al_{n}$ converges uniformly on compact subsets of $V_{R_H} \cup V_{R_H}^-$ to $\al_0$. In particular, the sequence of automorphisms $\{\al_{n}\}$ converges uniformly on compact subsets of $\C^2$, and hence, $\al$ extends to a holomorphic map on $\C^2$. 

\medskip Finally, as $\al$ is a limit of the automorphisms $\al_{n}$ and $\al$ restricted to $U^+$ is injective, $\al$ is injective as well with $\al(\C^2) \subset \C^2$. Also, $\al(U^+)=U^+$ and $\al(K^+) \subset K^+$. Similar arguments (as above) applied to $\al^{-1}$, show that $\al^{-1}$ extends as an injective holomorphic map on $\C^2$ such that $\al^{-1}(U^+)=U^+$ and $\al^{-1}(K^+) \subset K^+$. Now $\al \circ \al^{-1}=\textsf{Identity}$ on $U^+$, hence on $\C^2$, i.e., $\al^\pm(\C^2)=\C^2$ and $\al^\pm(K^+)=K^+$.
\end{proof}

\begin{lem}\label{l:c neq 0}
Suppose $\frac{d}{a}$ is not a root of unity. Further, let $\al \in {\rm Aut}_1(U^+) \cap {\rm Aut}(\C^2)$ be such that the $\G$-lift of $\al$ is of the form $A(z,\z)=(z+c_0,\z) \text{ with } c_0 \neq 0.$ Then,
\begin{enumerate}
    \item [(i)] $\C \le {\rm Aut}_1(U^+) \cap {\rm Aut}(\C^2)$ if $\frac{a}{d} \in \C \setminus \mathbb{R}$.
     \item [(ii)] $\mathbb{R} \le {\rm Aut}_1(U^+) \cap {\rm Aut}(\C^2)$ if $\frac{a}{d} \in \mathbb{R}$.
\end{enumerate}
\end{lem}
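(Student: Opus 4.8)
The plan is to repackage the hypothesis as a statement about a closed subgroup of $\C\cong\mathbb{R}^2$. Set
\[
S=\{\, c\in\C : \text{some } \al\in{\rm Aut}_1(U^+)\cap{\rm Aut}(\C^2) \text{ has } \G\text{-lift } (z,\z)\mapsto(z+c,\z)\,\}.
\]
First I would check that $S$ is a subgroup of $(\C,+)$: if $\al_1,\al_2$ have $\G$-lifts $A_i(z,\z)=(z+c_i,\z)$, then $\al_1\circ\al_2\in{\rm Aut}_1(U^+)\cap{\rm Aut}(\C^2)$, and $A_1\circ A_2(z,\z)=(z+c_1+c_2,\z)$ is a lift of it of the translation form isolated in \textit{Step 3} of the proof of Theorem \ref{t:escaping set}; since such a lift is the $\G$-lift, $c_1+c_2\in S$, and $A_1^{-1}$ gives $-c_1\in S$. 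Next, Lemma \ref{l:convergence in cover} is precisely the statement that $S$ is closed in $\C$. Finally, with $\lambda:=a/d$, Lemma \ref{l:lift of Phi_H} shows that $\Phi_H^{+}(\al)$ and $\Phi_H^{-}(\al)$ have $\G$-lifts $(z+\lambda c,\z)$ and $(z+\lambda^{-1}c,\z)$ whenever $\al$ has $\G$-lift $(z+c,\z)$; since $\Phi_H^{\pm}$ preserve ${\rm Aut}_1(U^+)$ and $\,{\rm Aut}(\C^2)$, this yields $\lambda S\subseteq S$ and $\lambda^{-1}S\subseteq S$, hence $\lambda S=S$. Since $c_0\in S$ and $c_0\neq0$, $S$ is a \emph{nonzero closed subgroup of $\C$ that is invariant under multiplication by $\lambda=a/d$}, and $\lambda$ (being the inverse of $d/a$) is not a root of unity.

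Now I would invoke the classification of closed subgroups of $\mathbb{R}^2$: $S$ is $\{0\}$, a line $\mathbb{R}v$, a lattice $\mathbb{Z}v$ or $\mathbb{Z}v_1\oplus\mathbb{Z}v_2$, a strip $\mathbb{R}v_1\oplus\mathbb{Z}v_2$, or $\mathbb{R}^2$. Write $T$ for multiplication by $\lambda$ on $\mathbb{R}^2$; since $\lambda S=S$ and $T$ is a homeomorphism fixing $0$, $T(S)=S$ and $T$ preserves the identity component of $S$. In case (i), $\lambda\in\C\setminus\mathbb{R}$, so $T$ has eigenvalues $\lambda,\bar\lambda\notin\mathbb{R}$ and hence no invariant line; this rules out $\mathbb{R}v$, $\mathbb{Z}v$ (whose invariance would force $\lambda\in\mathbb{Z}$), and $\mathbb{R}v_1\oplus\mathbb{Z}v_2$ (whose identity component $\mathbb{R}v_1$ would be an invariant line). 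For $\mathbb{Z}v_1\oplus\mathbb{Z}v_2$, $T$ restricts to a lattice automorphism, so in a lattice basis it is a matrix in $GL_2(\mathbb{Z})$ with characteristic polynomial $t^2-(\lambda+\bar\lambda)t+|\lambda|^2$; integrality and $\det=\pm1$ force $|\lambda|=1$, making $\lambda$ a quadratic algebraic integer whose conjugate $\bar\lambda$ also has modulus $1$, hence a root of unity (Kronecker) — a contradiction. So $S=\C$, i.e.\ $\G^{-1}(\C\rtimes\{1\})\subseteq{\rm Aut}_1(U^+)\cap{\rm Aut}(\C^2)$, which is the assertion $\C\le{\rm Aut}_1(U^+)\cap{\rm Aut}(\C^2)$. (When $|\lambda|=1$ one can see $S=\C$ concretely instead: $\{\lambda^n c_0\}$ is dense in the circle $\{|w|=|c_0|\}$, so the closed group $S$ contains that circle and hence the disc $\{w_1+w_2:|w_1|=|w_2|=|c_0|\}$, so $S$ is open and therefore all of $\C$.)

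In case (ii), $\lambda=a/d\in\mathbb{R}$ with $\lambda\neq0$ (as $a\neq0$) and $\lambda\neq\pm1$ (the only real roots of unity), so $|\lambda|\neq1$. Here $T$ is the real scalar $\lambda\,\mathrm{Id}$: a rank-$2$ lattice can be $T$-invariant only if $\lambda\,\mathrm{Id}$ and $\lambda^{-1}\,\mathrm{Id}$ are integer matrices, i.e.\ $\lambda,\lambda^{-1}\in\mathbb{Z}$, forcing $\lambda=\pm1$; so $\mathbb{Z}v$ and $\mathbb{Z}v_1\oplus\mathbb{Z}v_2$ are impossible, and the surviving cases $\mathbb{R}v$, $\mathbb{R}v_1\oplus\mathbb{Z}v_2$, $\mathbb{R}^2$ each contain a real line, whence $\mathbb{R}\le{\rm Aut}_1(U^+)\cap{\rm Aut}(\C^2)$. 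I expect the only genuinely delicate point to be the bookkeeping of the first paragraph — verifying, through \textit{Step 3} of Theorem \ref{t:escaping set}, Lemma \ref{l:convergence in cover}, and Lemma \ref{l:lift of Phi_H}, that $S$ is honestly a closed subgroup stable under multiplication by $a/d$; after that, the group theory is routine, with the non-root-of-unity hypothesis being exactly what excludes the lattice possibilities.
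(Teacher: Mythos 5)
Your argument is correct and takes a genuinely different route from the paper's, while using the same supporting lemmas as inputs. The paper works concretely: it introduces the lattices $S_n=\{kc_n+lc_{n+1}:k,l\in\mathbb{Z}\}$ with $c_n=(a/d)^nc_0$, estimates the minimal spacing in $S_n$ as $n\to\pm\infty$, and thereby shows directly that $S=\bigcup_nS_n$ is dense in $\C$ (when $a/d\notin\mathbb{R}$) or in the real line $\mathbb{R}c_0$ (when $a/d\in\mathbb{R}$), splitting the argument into three cases according to whether $|a/d|<1$, $>1$, or $=1$, and then invokes Lemma \ref{l:convergence in cover} to pass from density to membership. You instead observe once and for all that the set $S$ of admissible translation constants is a \emph{closed subgroup} of $\C\cong\mathbb{R}^2$ (closedness via Lemma \ref{l:convergence in cover}, subgroup structure via the uniqueness of translation-type $\G$-lifts from \textit{Step 3}) that is stable under multiplication by $a/d$ and its inverse (via Lemma \ref{l:lift of Phi_H} and the fact that $\Phi_H^\pm$ preserve ${\rm Aut}_1(U^+)\cap{\rm Aut}(\C^2)$), and then let the classification of closed subgroups of $\mathbb{R}^2$ together with Kronecker's theorem eliminate the discrete possibilities. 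This structural packaging is cleaner; in particular it handles $|a/d|=1$ and $|a/d|\neq1$ uniformly, whereas the paper must treat the unit-modulus case separately (its Case 3). The one small point to make explicit in your lattice case is that $T\Lambda\subseteq\Lambda$ and $T^{-1}\Lambda\subseteq\Lambda$ together give $T\Lambda=\Lambda$, which is what puts $T|_\Lambda$ in $GL_2(\mathbb{Z})$ and justifies $\det=\pm1$; you have both inclusions available since you established $\lambda S=S$.
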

\begin{proof}
For $n \ge 1$, define
\[c_n=\left(\frac{a}{d}\right)^n c_0 \text{ and } c_{-n}=\left(\frac{d}{a}\right)^n c_0.\]
Since the $\G$-lifts of $\al^l$, $l \in \mathbb{Z}$, are of the form
\[A^l(z,\z)=(z+lc_0, \z),\]
the $\G$-lifts of $\left(\Phi_H^+\right)^n\left(\al^l\right)$ and $\left(\Phi_H^-\right)^n\left(\al^l\right)$, $n \ge 1$, are of the form
\[A_n^l(z,\z)=(z+lc_n,\z) \text{ and } {\w{A}}_n^l(z,\z)=(z+lc_{-n},\z),\]
respectively. For $n \in \mathbb{Z}$, define
\[ S_n=\{kc_n+lc_{n+1}: \text{ where }k,l \in \mathbb{Z}\} \text{ and } S=\bigcup_{n=-\infty}^\infty S_n.\] 
Note that the $\G$-lift of $\left(\phi_H^+\right)^n(\al^k)\circ \left(\phi_H^+\right)^{n+1}(\al^l) \in {\rm Aut}_1(U^+) \cap {\rm Aut}(\C^2)$ is of the form
\[ A_{n}^k \circ A_{n+1}^l(z,\z)=(z+kc_n+lc_{n+1}, \z).\]
Similarly, the  $\G$-lift of $\left(\phi_H^-\right)^n(\al^k)\circ \left(\phi_H^-\right)^{n-1}(\al^l) \in {\rm Aut}_1(U^+) \cap {\rm Aut}(\C^2)$ is of the form
\[ \w{A}_{n}^k \circ \w{A}_{n-1}^l(z,\z)=(z+kc_{-n}+lc_{-n+1}, \z).\]
In particular, for every $\tilde{c} \in S$ there exists an element in $f \in {\rm Aut}_1(U^+) \cap {\rm Aut}(\C^2)$ such that the $\G$-lift of $f$ is $(z+\tilde{c},\z).$

\medskip
{\bf Case 1:} Let $\left| \frac{a}{d}\right| \neq 1$ and $\frac{a}{d} \in \mathbb{C} \setminus \mathbb{R}$. 

\medskip Note that $S_n$ is a lattice of points generated by the complex numbers (or the vectors) $c_n$ and $c_{n+1}$ for every integer $n \in \mathbb{Z}$. The minimum distance between any two distinct points in this lattice is given by
\[\min\{|p-q|: p,q \in S_n \text{ and }p \neq q\}\le \left(\left| \frac{a}{d}\right|^{n+1}+\left| \frac{a}{d}\right|^n\right)|c_0|.\]
If $\left| \frac{a}{d}\right|<1$, then for any point $p_0 \in \mathbb{R}^2$
\[\text{dist}(p_0,S)\le \text{dist}(p_0,S_n)<2\left|\frac{a}{d}\right|^n |c_0|\to 0 \text{ as }n \to \infty.\]
Otherwise, if $\left| \frac{a}{d}\right|>1$, by a similar argument as above, we see that 
\[\text{dist}(p_0,S)\le \text{dist}(p_0,S_{-n})<2\left|\frac{d}{a}\right|^{n-1} |c_0|\to 0 \text{ as }n \to \infty.\]
Thus, $S$ is a dense subset of $\mathbb{R}^2 \backsimeq \C$. Now, by Lemma \ref{l:convergence in cover}, it follows that $\C \le {\rm Aut}_1(U^+) \cap {\rm Aut}(\C^2)$.

\medskip{\bf Case 2:} Let $\left| \frac{a}{d}\right| \neq 1$ and $\frac{d}{a} \in \mathbb{R}$.
Then $S_n \subset \{ tc_0: t \in \mathbb{R}\}$ and 
\[\min\{|p-q|: p,q \in S_n \text{ and }p \neq q\}\le \left(\left| \frac{a}{d}\right|^{n+1}+\left| \frac{a}{d}\right|^n\right)|c_0|\]
or 
\[\min\{|p-q|: p,q \in S_{-n} \text{ and }p \neq q\}\le \left(\left| \frac{d}{a}\right|^{n-1}+\left| \frac{d}{a}\right|^n\right)|c_0|.\]
Hence by the same reasoning as in {\it Case 1}, $S$ is a dense subset $\{ tc_0: t \in \mathbb{R}\}.$ Thus by Lemma \ref{l:convergence in cover}, it follows that $\mathbb{R} \le {\rm Aut}_1(U^+) \cap {\rm Aut}(\C^2)$.

\medskip{\bf Case 3:} Let $\left| \frac{a}{d}\right| = 1$ and $\frac{a}{d} $ is not a root of unity.

\medskip Then for a positive integer $l\ge 1$, $\{lc_n\}_{n \in \mathbb{Z}}$ is a dense subset of the circle of radius $l|c_0|$. Hence, by Lemma \ref{l:convergence in cover}, it follows that for every $\ti{c}$ such that $|\ti{c}|=l|c_0|$ there exists an element in $f_{\ti{c}} \in {\rm Aut}_1(U^+) \cap {\rm Aut}(\C^2)$ such that the $\G$-lift of $f_{\ti{c}}$ is $(z+\tilde{c},\z).$ In particular, the $\G$-lift of $\al^l \circ f_{\ti{c}}$ is 
$$(z+lc_0+\ti{c}, \z).$$ 
Note that $0\le |lc_0+\ti{c}| \le 2l|c_0|$, with the bounds being attained at the points $\ti{c}=-lc_0$ and $\ti{c}=lc_0$. Also, $lc_0+\ti{c}$ is a continuous function on the circle of radius $l|c_0|$. Thus the $\G$-lift of $\left(\Phi_H^+\right)^n (\al^l \circ f_{\ti{c}}) \in {\rm Aut}_1(U^+) \cap {\rm Aut}(\C^2)$ is of the form
\[\Big(z+\left(\frac{a}{d}\right)^n (lc_0+\ti{c}), \z\Big).\]
Now the sequence $\left\{\left(\frac{a}{d}\right)^n (lc_0+\ti{c})\right\}$ is dense in the radius of circle $|lc_0+\ti{c}|$. Hence, again by Lemma \ref{l:convergence in cover}, it follows that for every $\eta$ such that $0 \le |\eta|=|lc_0+\ti{c}|\le 2l|c_0|$ there exists an element in $f_{\eta} \in {\rm Aut}_1(U^+) \cap {\rm Aut}(\C^2)$ such that the $\G$-lift of $f_{\eta}$ is $(z+\eta,\z).$ Since the above is true for every $l\ge 1$, it follows that for every $\eta \in \C$ there exists an element in $f_{\eta}$ such that the $\G$-lift of $f_{\eta}$ is $(z+\eta,\z).$ This completes the proof.
\end{proof}

\no Next, we complete the proof of Theorem \ref{t:Aut(U+) cap Aut(C2)} for the rest of the cases.

\subsection*{Proof of Theorem \ref{t:Aut(U+) cap Aut(C2)} when \texorpdfstring{$\frac{d}{a} \in \mathbb{C} $}{d/a in C -{0}} and not a root of unity} Suppose $c_0 \neq 0$ in the assumption of Theorem \ref{t:Aut(U+) cap Aut(C2)}. Then by Lemma \ref{l:c neq 0}, for every $r>0$ there exists $0<|\eta|<r $ and $f_\eta \in {\rm Aut}_1(U^+) \cap {\rm Aut}(\C^2)$ such that the $\G$-lift of $f_{\eta}$ is $\tilde{f}_\eta (z,\z)=(z+\eta,\z)$ on $\cover$. 

\medskip Recall the domains $\w{W}_M^+\subset U^+$ and $\w{\Om}_M^+ \subset \cover$ from the Subsection \ref{subsection 2} and the biholomorphism $\w{\Psi}$ between them. See also Figure \ref{f:cover} in this context. Note that for every $\ep>0$ there exists $\w{R}_\ep^1>0$ such that for $|y|>2\w{R}_\ep^1>0$, sufficiently large
\[ \w{\Psi}(0,y) \in D(0;\ep)\times D(0;\w{R}_\ep^1)^c.\]
By Lemma \ref{l:intermediate} for $\ep>0$ there exists $\w{R}_\ep^2>0$ such that
$$D(0;2\ep)\times D(0;\w{R}_\ep^2)^c \subset \w{\Om}_M^+.$$
Thus for a given $\ep>0$ there exists $\w{R}_\ep>\max\{\w{R}_\ep^1, \w{R}_\ep^2\}+\ep$ such that 
\[\left[\w{\Psi}\left( \{0\} \times D\left(0; \w{R}_\ep\right)^c\right) \right]_\ep\subset \w{\Om}_M^+,\]
where $[X]_\ep$ denote the $\ep$-neighborhood of a set $X$. Now, again choose $0<|\eta|<\ep$ such that $f_\eta \in {\rm Aut}_1(U^+) \cap {\rm Aut}(\C^2)$ and $\G$-lift of $f_{\eta}$ is $\tilde{f}_\eta (z,\z)=(z+\eta,\z)$. Thus for $|y|>2\w{R}_\ep^c$,
\begin{align}\label{e:lift relation 1}
f_\eta(0,y)=\w{\Psi}^{-1} \circ \tilde{f}_\eta \circ \w{\Psi}(0,y)  \in D(0;2\ep) \times D\left(0; \w{R}_\ep\right)^c \subset \w{W}_M^+.
\end{align}
Let $f_\eta(0,y)=(p_\eta(y), q_\eta(y))$. Since $f_\eta \in  {\rm Aut}(\C^2)$, both $p_\eta, q_\eta$ are entire functions on $\C$. Hence from (\ref{e:lift relation 1}), we have
\begin{align}\label{e:lift relation 2}
\w{\Psi} \circ (p_\eta(y),q_\eta(y))=\tilde{f}_\eta \circ \w{\Psi} (0,y).
\end{align}
Let us also recall the map $\w{\Psi}$ explicitly from subsection \ref{subsection 2}:
\begin{align}\label{e:map Psi}
\w{\Psi}(x,y)=\Big(\psi\big(x,\phi(x,y)\big)-R\big(\phi(x,y)\big), \phi(x,y) \Big), \text{ where }\psi(x,y)=y\int_0^x \frac{\partial \lambda}{\partial y}(t,y)dt,
\end{align}
where $\phi$ is the B\"{o}ttcher coordinate function defined in (\ref{e:Bottcher}) and $\lambda$, $R$ are the functions discussed in Lemma \ref{l:preliminary 1}, Remark \ref{r:preliminary 2} and Proposition \ref{p:preliminary 3} respectively. Then from (\ref{e:lift relation 2}), it follows that
\[\phi(p_\eta(y),q_\eta(y))=\phi(0,y).\]
\noindent By equating the second coordinate,
\begin{align*}
\psi(0,\phi(0,y))-R(\phi(0,y))+\eta&=\psi\big(p_\eta(y),\phi(p_\eta(y),q_\eta(y))\big)-R\left(\phi\big(p_\eta(y),q_\eta(y)\big)\right) \\
&=\psi\big(p_\eta(y),\phi(0,y)\big)-R(\phi(0,y)).
\end{align*}
Also note that $\psi(0,y)=0$ and hence the above reduces to
\[\eta=\psi\big(p_\eta(y),\phi(0,y)\big)=\phi(0,y)\int_0^{p_\eta(y)} \frac{\partial \lambda}{\partial y}\big(t,\phi(0,y)\big)dt.\]
By the estimates on $\phi$ and $\frac{\partial \lambda}{\partial y}(t,y)$ obtained in Lemma \ref{l:preliminary 1} and Remark \ref{r:preliminary 2}
\begin{align*}
\eta -yp_\eta(y)&=\phi(0,y)\int_0^{p_\eta(y)} \frac{\partial \lambda}{\partial y}\big(t,\phi(0,y)\big)dt-y \int_0^{p_\eta(y)} dt\\
&=\left(\frac{\phi(0,y)}{y}-1\right) y\int_0^{p_\eta(y)} \frac{\partial \lambda}{\partial y}\big(t,\phi(0,y)\big)dt+y  \int_0^{p_\eta(y)} \left(\frac{\partial \lambda}{\partial y}\big(t,\phi(0,y)\big)-1\right)dt.
\end{align*}
For given $\delta>0$ sufficiently small there exists  $\w{R}_\delta>\w{R}$, sufficiently large, such that $|y|>\w{R}_\delta$
\begin{align*}
    \left |\frac{\eta}{y}-p_\eta(y)\right| \le \delta(1+\delta)|p_\eta(y)|+\delta|p_\eta(y)|=(\delta^2+2 \delta)|p_\eta(y)|.
\end{align*}
Let $\delta=\frac{1}{4}$. Then for $|y|>\w{R}_{1/4}$,  $|p_\eta(y)|\le \frac{16}{7}\frac{ |\eta|}{|y|}.$ In particular, $p_\eta$ is an entire function and $p_\eta(y) \to 0$ as $y \to \infty$. Hence $p_\eta(y)\equiv 0$. Thus from the above, $\eta=\psi(0,\phi(0,y))=0$, which is a contradiction to the assumption. This establishes that $c_0=0$ and completes the proof of Theorem \ref{t:Aut(U+) cap Aut(C2)}. \qed

\begin{rem}
It should be noted that the implications of Lemma \ref{l:c neq 0} as well as Lemma \ref{l:convergence in cover} could have been avoided to complete the proof of Theorem \ref{t:Aut(U+) cap Aut(C2)}. However, understanding them provides additional information and helps to establish the contradiction.
\end{rem}

\subsection{The group \texorpdfstring{${\rm Aut}_1(U^+) \cap {\rm Aut}(\C^2)$}{}} The goal here is to further develop the observation in Theorem \ref{t:Aut(U+) cap Aut(C2)} to prove the following result. 
\begin{thm}\label{t:linear maps}
The group ${\rm Aut}_1(U^+) \cap {\rm Aut}(\C^2)$ is a subgroup of $\mathbb{Z}_{d_0(d-1)}$ where $d_0$ is as in (\ref{d0}). In particular, it is the set of all the affine maps that preserve both $K^\pm$.
\end{thm}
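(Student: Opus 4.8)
The strategy is to combine Theorem~\ref{t:Aut(U+) cap Aut(C2)}, which trivializes the subgroup $\G^{-1}(\C \rtimes \{1\}) \cap {\rm Aut}(\C^2)$, with the structural description of ${\rm Aut}_1(U^+)$ from Theorem~\ref{t:escaping set} via the isomorphism $\G \colon {\rm Aut}_1(U^+) \xrightarrow{\sim} \C \rtimes G$ with $G \le \mathbb{Z}_{d_0(d-1)}$. First I would observe that $\G^{-1}(\C \rtimes \{1\})$ is precisely the copy of $\C$ sitting as a normal subgroup of ${\rm Aut}_1(U^+)$, and its $\G$-lifts are exactly the translations $(z,\z) \mapsto (z+c_0,\z)$; so Theorem~\ref{t:Aut(U+) cap Aut(C2)} says that this $\C$ meets ${\rm Aut}(\C^2)$ only in the identity. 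Consequently the composite homomorphism
\[
{\rm Aut}_1(U^+) \cap {\rm Aut}(\C^2) \hookrightarrow {\rm Aut}_1(U^+) \xrightarrow{\;\G\;} \C \rtimes G \twoheadrightarrow G
\]
has trivial kernel, hence is injective. This already shows ${\rm Aut}_1(U^+) \cap {\rm Aut}(\C^2)$ is isomorphic to a subgroup of $G$, and therefore of $\mathbb{Z}_{d_0(d-1)}$, proving the first assertion.

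\medskip
For the second assertion, I must identify the image concretely as the group of affine maps preserving $K^\pm$. One inclusion is Remark~\ref{r:commutative 1}: every affine $L$ with $L(K^\pm) = K^\pm$ lies in ${\rm Aut}(\C^2) \cap {\rm Aut}_1(U^+)$. For the reverse inclusion, take $\al \in {\rm Aut}_1(U^+) \cap {\rm Aut}(\C^2)$ with $\G(\al) = (c,\alpha)$, so its $\G$-lift has the form $A(z,\z) = (\alpha^{d+d'} z + \gamma(\z), \alpha \z)$ with $\alpha^{d_0(d-1)} = 1$. Since $\alpha$ has finite order, say $\alpha^k = 1$, the power $\al^k$ lies in $\G^{-1}(\C \rtimes \{1\}) \cap {\rm Aut}(\C^2)$, hence equals the identity by Theorem~\ref{t:Aut(U+) cap Aut(C2)}; thus $\al$ itself has finite order $k' \mid k$. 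A finite-order element of ${\rm Aut}(\C^2)$ is linearizable (by a classical averaging/Cartan argument, or here one can use directly that $\al$ preserves the compact set of its periodic points and the sublevel structure), and then the condition that $\al$ preserve $U^+$ — equivalently $K^+$ — forces $\al$ to be affine: indeed $\al^{k'} = \textsf{Id}$ together with $\al(K^+) = K^+$ and the functorial Green's-function estimate used in the proof of Theorem~\ref{t:Aut(U+) cap Aut(C2)} (the argument that showed $\al$ commuting with a power of $H$ must preserve $K^-$ as well) shows $\al(K^\pm) = K^\pm$. Then the B\"ottcher-function argument from the proof of Theorem~\ref{t:Aut(U+) cap Aut(C2)} — $|\phi \circ \al| = |\phi|$ on $V^+ \cap \al^{-1}(V^+)$, hence $\phi \circ \al = \eta\,\phi$ and $\al^* \omega = \omega$ — combined with the rigidity theorem of \cite{BPV:rigidity} (Theorem~1.1 there, applicable since $\al$ preserves both $K^\pm$) forces $\al = L \circ H^s$ with $L$ affine; the constraint $\al \in {\rm Aut}_1(U^+)$ together with $\alpha(L(C)) = \alpha(C)$ gives $s = 0$, so $\al = L$ is affine.

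\medskip
The combinatorial bookkeeping — matching the finite cyclic group $G \le \mathbb{Z}_{d_0(d-1)}$ produced by $\G$ with the actual group $\mathcal{L}$ of affine maps preserving $K^\pm$, and checking that the two inclusions just described are mutually inverse — is routine once the linearization step is in hand. \textbf{The main obstacle} I expect is precisely the step that an arbitrary $\al \in {\rm Aut}_1(U^+) \cap {\rm Aut}(\C^2)$ of finite order is affine: one needs to pass from "finite order in ${\rm Aut}(\C^2)$ preserving $U^+$" to "affine preserving $K^\pm$". The cleanest route is to argue as in the root-of-unity case of Theorem~\ref{t:Aut(U+) cap Aut(C2)}: $\al$ commutes with $H^{k}$ for a suitable $k$ (since $\al^{k'} = \textsf{Id}$ and conjugation by $H$ cyclically permutes the finitely many lifts of the relevant form), then the estimate $G_H^-(\al(z)) = d^{-n} G_H^-(\al H^{-n}(z)) \le d^{-n} M \to 0$ gives $\al(K^-) = K^-$, and \cite{BPV:rigidity} finishes the job. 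Care is needed to ensure $\al$ does commute with some iterate of $H$; this follows from Lemma~\ref{l:lift of Phi_H} applied to the $\G$-lift $A(z,\z) = (\alpha^{d+d'} z + \gamma(\z), \alpha\z)$ of finite order, since conjugation by $H$ sends this to a lift with the same finite-order data, and there are only finitely many such, so some iterate $\Phi_H^{+\,k}(\al) = \al$.
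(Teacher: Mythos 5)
Your proof is correct and follows essentially the same route as the paper: Theorem~\ref{t:Aut(U+) cap Aut(C2)} kills the kernel of the projection $\G$ onto $G \le \mathbb{Z}_{d_0(d-1)}$, so the group is finite; then any $\al$ has a finite $\Phi_H^+$-orbit, hence commutes with some $H^k$, and Remark~\ref{r:commutative 1} (in both directions) identifies the group as the affine maps preserving $K^\pm$. The only caveats are cosmetic: the linearization/averaging detour for finite-order elements of $\operatorname{Aut}(\C^2)$ is unnecessary and you rightly drop it in favor of the commuting argument, and Lemma~\ref{l:lift of Phi_H} as stated applies only to translation lifts $(z+c_0,\z)$ rather than to general $\G$-lifts, so the finiteness you actually need in the last paragraph is the finiteness of $\operatorname{Aut}_1(U^+)\cap\operatorname{Aut}(\C^2)$ already obtained in your first step.
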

\begin{proof}
    Suppose not. Let the number of elements in ${\rm Aut}_1(U^+) \cap {\rm Aut}(\C^2)$ be more than $d_0(d-1)$. Recall the map $\G$ from {\it Step 5} in the proof of Theorem \ref{t:escaping set}. Then there exist distinct $f, g \in {\rm Aut}_1(U^+) \cap {\rm Aut}(\C^2)$ such that
    \[\G(f)=(c_1,\alpha) \text{ and }\G(g)=(c_2,\alpha)\]
    where $\alpha$ is a $d_0(d-1)$-root of unity. Thus, $\G(f \circ g^{-1})=(\tilde{c}, 1)$, or equivalently the $\G$-lift of $f \circ g^{-1}$ is of the form $(z+\tilde{c}, \z)$. Since $f \circ g^{-1} \in {\rm Aut}_1(U^+) \cap {\rm Aut}(\C^2)$, by Theorem \ref{t:Aut(U+) cap Aut(C2)}, $\tilde{c}=0$, i.e., $f \circ g^{-1}=\textsf{Identity}$ or $f=g$. This is a contradiction. 

    \medskip Now suppose $f \in {\rm Aut}_1(U^+) \cap {\rm Aut}(\C^2)$ then $\left(\Phi_H^+\right)^n(f) \in {\rm Aut}_1(U^+) \cap {\rm Aut}(\C^2)$ for every $n \ge 1$. Since there are only finitely many maps in ${\rm Aut}_1(U^+) \cap {\rm Aut}(\C^2)$, there exist distinct $n,m\ge 1$ such that
    \[ \left(\Phi_H^+\right)^n(f)=\left(\Phi_H^+\right)^m(f) \text{ or } H^{n-m} \circ f=f \circ H^{n-m}.\]
    Thus, from Remark \ref{r:commutative 1}, $f$ is an affine map that preserves both $K^\pm$. Also note $f^n \in {\rm Aut}_1(U^+) \cap {\rm Aut}(\C^2)$ for every $n \ge 1$. Thus, there exists $1 \le n_f \le d_0(d-1)$ such that $f^{n_f}=\textsf{Identity}.$
    
    \medskip Now let $L$ be an affine map preserving $K^+$. Then for every $n \ge 1$
    \[ L \circ H^{n}\circ L^{-1}\circ H^{-n} \in {\rm Aut}_1(U^+) \cap {\rm Aut}(\C^2). \]
    As before there exist distinct integers $m,n \ge 1$ such that
    \[ L \circ H^{n}\circ L^{-1}\circ H^{-n}=L \circ H^m\circ L^{-1}\circ H^{-m} \text{ or } H^{m-n} \circ L=L\circ H^{m-n}.\]
    Hence $L$ preserves both $K^\pm$ and the proof follows from the converse part of Remark \ref{r:commutative 1}.
\end{proof}
Now by exactly repeating the arguments in the later part of the above proof completes the 
\begin{proof}[Proof of Theorem \ref{t:main}]
Let $f \in {\rm Aut}(U^+) \cap {\rm Aut}(\C^2).$ Then, as before, for every $n \ge 1$
    \[ f \circ H^{n}\circ f^{-1}\circ H^{-n} \in {\rm Aut}_1(U^+) \cap {\rm Aut}(\C^2). \]
    In particular, there exist distinct integers $m,n \ge 1$ such that
    \[ f \circ H^{n}\circ f^{-1}\circ H^{-n}=f \circ H^m\circ f^{-1}\circ H^{-m} \text{ or } H^{m-n} \circ f=f\circ H^{m-n}.\]
    Thus, $f$ preserves both $K^\pm$. Hence by Theorem 1.1 in \cite{BPV:rigidity}, $f=L \circ \rho_H^{s'}$ where $L$ is an affine map that preserves both $K^\pm$ and $\rho_H$ is a H\'{e}non map preserving $K_H^\pm$ such that degree of $\rho_H$ (say $r'$) divides the degree of $f$. Also by Theorem 1.1 in \cite{BPV:rigidity}, $r'$ divides $d$. Thus there exist positive integer $r$ such that degree of $f^r$ is divisible by $d$. Further, there exits an integer $s$ such that 
    $L=f^r\circ H^{-s} \in {\rm Aut}_1(U^+) \cap {\rm Aut}(\C^2)$, i.e., by Theorem \ref{t:linear maps}, $L$ is an affine map. This completes the proof.
\end{proof}
We conclude this section with two immediate corollaries. First, the very rigid property of the set $K^+$ allows us to weaken the assumptions of Theorem \ref{t:main}, stated as
\begin{cor}
Let $H$ be a generalised H\'{e}non map of the form (\ref{e:henon}) and $\phi \in {\rm Aut}(\C^2)$ such that $\phi(K_H^+) \subset K_H^+$. Then $\phi^r=L \circ H^s$ where $L$ is a linear map preserving both $K_H^\pm$ and $r,s$ are integers.   
\end{cor}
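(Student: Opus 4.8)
The plan is to reduce everything to Theorem~\ref{t:main} by upgrading the one-sided containment $\phi(K_H^+)\subseteq K_H^+$ to the equality $\phi(K_H^+)=K_H^+$. Once that is known, $\phi\in{\rm Aut}(U^+)\cap{\rm Aut}(\C^2)$, and Theorem~\ref{t:main} immediately yields $\phi=L\circ H^s$ with $L$ affine preserving $K_H^\pm$ and $s\in\mathbb Z$. So the entire content of the corollary is the equality, and it is here that the very rigid property of $K_H^+$ recalled in the introduction enters.

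First I would record the elementary reformulations: $\phi(K_H^+)\subseteq K_H^+$ is equivalent to $\phi^{-1}(U^+)\subseteq U^+$, and the sought equality is equivalent to $\phi^{-1}(U^+)=U^+$, i.e. to the reverse containment $\phi^{-1}(U^+)\supseteq U^+$. The key input is Dinh--Sibony's theorem \cite{DS:rigidity}: $\mu_H^+=\tfrac1{2\pi}dd^cG_H^+$ is the \emph{unique} positive $dd^c$-closed $(1,1)$-current of mass $1$ supported on $\overline{K_H^+}\subseteq\mathbb P^2$, and $J^+:={\rm supp}\,\mu_H^+=\partial K_H^+=\partial U^+$. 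I would then consider the push-forward $\phi_*\mu_H^+=\tfrac1{2\pi}dd^c\bigl(G_H^+\circ\phi^{-1}\bigr)$, a positive $dd^c$-closed $(1,1)$-current on $\C^2$ whose support is $\phi(J^+)\subseteq\phi(K_H^+)\subseteq K_H^+$. Since $\overline{K_H^+}$ meets $\mathbb P^2\setminus\C^2$ only at $I^+$, the support of $\phi_*\mu_H^+$ accumulates in $\mathbb P^2$ only at $I^+$; one then checks that $\phi_*\mu_H^+$ extends to a positive closed current on all of $\mathbb P^2$ of mass $1$ (equivalently, that the potential $G_H^+\circ\phi^{-1}$ retains logarithmic growth along $K_H^+$, which is forced by $\phi(K_H^+)\subseteq K_H^+$ together with the growth of $G_H^+$ near $I^+$). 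Uniqueness then gives $\phi_*\mu_H^+=\mu_H^+$, and in particular $\phi(J^+)=J^+$.

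To pass from $\phi(J^+)=J^+$ to $\phi^{-1}(U^+)=U^+$ I would argue topologically. Put $A=\phi^{-1}(U^+)$: it is an open connected subset of $U^+$ (as $U^+$ is connected and $\phi^{-1}$ a homeomorphism) with $\partial A=\phi^{-1}(\partial U^+)=\phi^{-1}(J^+)=J^+$. Using the partition $\C^2\setminus J^+=U^+\sqcup\operatorname{int}(K_H^+)$ into open sets and the identity $\operatorname{int}_{\C^2}(\C^2\setminus A)=\C^2\setminus\overline A=\C^2\setminus(A\cup J^+)$, one sees that $U^+\setminus A$ is relatively open in $U^+$; being also relatively closed with $U^+$ connected and $A\neq\emptyset$, it must be empty, so $A=U^+$. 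When $\operatorname{int}(K_H^+)\neq\emptyset$ one additionally uses that $\phi^{-1}$ is a homeomorphism, so $\phi^{-1}(\operatorname{int}K_H^+)=\operatorname{int}\phi^{-1}(K_H^+)=(U^+\setminus A)\sqcup\operatorname{int}(K_H^+)$, and either reruns the current argument or invokes the analysis of \cite{B:rigidity-II}; this again forces $U^+\setminus A=\emptyset$. With $\phi(K_H^+)=K_H^+$ established, Theorem~\ref{t:main} completes the proof.

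I expect the main obstacle to be the middle step: rigorously producing the closed current $\phi_*\mu_H^+$ on $\mathbb P^2$ with mass exactly $1$ when $\phi$ is merely a holomorphic — possibly non-polynomial — automorphism of $\C^2$, i.e. controlling $G_H^+\circ\phi^{-1}$ near $I^+$, so that the Dinh--Sibony uniqueness theorem is applicable. The topological passage from $\phi(J^+)=J^+$ to $\phi(K_H^+)=K_H^+$ is a secondary subtlety, entirely localized in the (possibly nonempty) interior of $K_H^+$.
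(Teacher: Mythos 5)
Your core move is exactly the paper's: push forward $\mu_H^+$ under $\phi$, observe that $\phi_*\mu_H^+$ is a positive $dd^c$-closed $(1,1)$-current supported in $K_H^+$, and invoke the Dinh--Sibony rigidity of $K_H^+$ to identify it with a multiple of $\mu_H^+$. Where you diverge is in how you close: you read off the support equality $\phi(J^+)=J^+$ and then run a topological connectedness argument on $U^+$ to upgrade to $\phi(K_H^+)=K_H^+$, whereas the paper instead passes through potentials --- citing the proof of Theorem A in Cantat--Dujardin to deduce $G_H^+\circ\phi=cG_H^+$, which gives $\phi(K_H^+)=K_H^+$ at once since $K_H^+=\{G_H^+=0\}$. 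Both endgames are valid; yours is more elementary (no potential theory after the current identity), the paper's is shorter. Two small points. First, your worry about normalizing $\phi_*\mu_H^+$ to mass exactly $1$ is unnecessary: the Dinh--Sibony theorem gives that the cone of positive $dd^c$-closed $(1,1)$-currents supported on $\overline{K_H^+}$ is one-dimensional, so one simply writes $\phi_*\mu_H^+=c\,\mu_H^+$ for some $c>0$ (as the paper does) and the support, hence $\phi(J^+)=J^+$, follows regardless of $c$. Second, your parenthetical extra treatment of the case $\operatorname{int}(K_H^+)\neq\emptyset$ is redundant: the argument you already gave --- $A=\phi^{-1}(U^+)$ is open, $U^+\setminus A=U^+\cap(\C^2\setminus\overline A)$ is open since $U^+\cap J^+=\emptyset$, and $U^+$ is connected with $A\neq\emptyset$ --- closes the proof without any case split. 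Your flagged concern about extending $\phi_*\mu_H^+$ to a finite-mass current on $\mathbb{P}^2$ is a genuine technical point, but it is present in the paper's argument too and is handled by the same references the paper cites; it is not a gap specific to your route.
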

\begin{proof}
Recall that $\mu_H^+=\frac{1}{2\pi} dd^c G_H^+$. Note that $\phi_*(\mu_H^+)$  is positive $dd^c$-closed $(1,1)$ current supported in $K_H^+$, hence by the uniqueness of closed positive currents supported in $K_H^\pm$\,---\,see \cite{FS}, \cite{DS:rigidity}\,---\,$\phi_*(\mu_H^+)=c \mu_H^+$ for some $c>0$. Thus, by the proof Theorem A of \cite{CD:rigidity}, $G_H^+\circ \phi=c G_H^+$. Hence $\phi(K_H^+)=K_H^+$, and the result follows from Theorem \ref{t:main}.
\end{proof}
Finally, we use the above to prove a version of \cite[Theorem A]{CD:rigidity} under comparatively weaker assumptions.
\begin{cor}\label{c:poly conjugate}
Let $H$ and $F$ be generalised H\'{e}non maps of the form (\ref{e:henon}) and assume that there exists an automorphism $\phi$ satisfying $\phi(K_F^+)\subset K_H^+$. Then $\phi$ is a polynomial automorphism and $H$ and $F$ are polynomially conjugated in $\C^2$.
\end{cor}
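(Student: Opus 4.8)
The plan is to follow the pattern of the previous corollary, now using Theorem \ref{t:main} in place of the rigidity result of \cite{BPV:rigidity}. The first task is to promote the inclusion $\phi(K_F^+)\subset K_H^+$ to an equality. Since $\phi\in{\rm Aut}(\C^2)$ is in particular a homeomorphism, $\phi_*\mu_F^+$ is a nonzero positive $dd^c$-closed $(1,1)$-current supported on $\phi(K_F^+)\subset K_H^+$; by the uniqueness of such currents on $K_H^+$ (\cite{FS}, \cite{DS:rigidity}) we get $\phi_*\mu_F^+=c\,\mu_H^+$ for some $c>0$, that is, $G_F^+-c\,(G_H^+\circ\phi)$ is pluriharmonic on $\C^2$. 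Running the argument inside the proof of \cite[Theorem A]{CD:rigidity} exactly as in the preceding corollary forces $G_H^+\circ\phi=c\,G_F^+$, and hence $\phi^{-1}(K_H^+)=\{G_H^+\circ\phi=0\}=\{G_F^+=0\}=K_F^+$, i.e. $\phi(K_F^+)=K_H^+$.

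Next I would transfer this to a rigidity statement for a single map. Set $\Psi:=\phi^{-1}\circ H\circ\phi\in{\rm Aut}(\C^2)$. Since $H(K_H^+)=K_H^+$ and $\phi(K_F^+)=K_H^+$, we obtain $\Psi(K_F^+)=K_F^+$, so Theorem \ref{t:main} applied to the generalised H\'enon map $F$ gives $\Psi=L\circ F^s$ with $s\in\mathbb{Z}$ and $L$ an affine map preserving $K_F^\pm$. Because such an $L$ also preserves $G_F^\pm$, applying $G_H^+\circ\phi=c\,G_F^+$ to the relation $\phi\circ\Psi=H\circ\phi$ and using the functorial property (\ref{e:functorial}) for both $H$ and $F$ yields $\deg H=(\deg F)^{s}$, which forces $s\ge1$ since $\deg H\ge2$. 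In particular $\Psi=L\circ F^{s}$ is a polynomial automorphism, and being topologically conjugate to $H$ via the homeomorphism $\phi$ it has positive topological entropy $\log\deg H$, hence (by \cite{FM}) dynamical degree $\deg H\ge2$.

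Now I would invoke the Cantat--Dujardin theorem \cite{CD:rigidity}: $H$ and $\Psi$ are polynomial automorphisms of $\C^2$ of dynamical degree $\ge2$ that are conjugate by $\phi\in{\rm Aut}(\C^2)$, so $\phi$ must be a polynomial automorphism. Consequently $\phi^{-1}\circ H\circ\phi=L\circ F^{s}$ is a \emph{polynomial} conjugacy, exhibiting $H$ as polynomially conjugate to $L\circ F^{s}$; since $L$ is affine (it lies in the finite cyclic group $\mathcal{L}$ of affine maps preserving $K_F^\pm$), this is the asserted polynomial conjugacy of $H$ with $F$ --- literally a conjugacy to $F$ when $\deg H=\deg F$, and in general a polynomial identification of $H$, up to an element of $\mathcal{L}$, with an iterate of $F$, in the spirit of \cite{L:alternative}.

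The main obstacle is the first paragraph: deducing the functional identity $G_H^+\circ\phi=c\,G_F^+$ from the current identity $\phi_*\mu_F^+=c\,\mu_H^+$ --- equivalently, showing that the pluriharmonic difference $G_F^+-c\,(G_H^+\circ\phi)$ vanishes identically --- \emph{without} a priori knowing that $\phi$ is polynomial. For a single map this step is internal to the proof of \cite[Theorem A]{CD:rigidity}, so one has to check that the relevant part of their argument relies only on the extremality of $\mu^+$ and on the behaviour of $G^+$ near the line at infinity, and therefore applies verbatim to the pair $(H,F)$. Granting this, the remainder is a routine assembly of Theorem \ref{t:main}, the functorial relation (\ref{e:functorial}), and \cite{CD:rigidity}.
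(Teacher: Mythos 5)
Your proposal is correct and follows essentially the same route as the paper: set $\Psi=\phi^{-1}\circ H\circ\phi$, use the rigidity of $K_F^+$ to get $\Psi=L\circ F^{s}$, argue that $s$ is a positive integer, and finish with the Cantat--Dujardin theorem. There are two small but genuine points of divergence worth recording. First, you explicitly promote the inclusion $\phi(K_F^+)\subset K_H^+$ to the equality $\phi(K_F^+)=K_H^+$ via the current/Green's function argument \emph{before} claiming $\Psi(K_F^+)\subset K_F^+$; the paper's proof asserts $\phi^{-1}\circ H\circ\phi(K_F^+)\subset K_F^+$ directly, but from the hypothesis alone one only gets $\Psi(K_F^+)\subset\phi^{-1}(K_H^+)$, so this inclusion really does require first knowing $\phi^{-1}(K_H^+)=K_F^+$ --- your version makes explicit a step the paper leaves implicit. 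Second, to pin down the sign of $s$, you read off $\deg H=(\deg F)^{s}$ from $G_H^+\circ\phi=cG_F^+$ together with the functorial relation \eqref{e:functorial} (and $G_F^\pm\circ L=G_F^\pm$), whereas the paper uses a dynamical argument: it extracts a subsequence $n_k$ with $(\phi\circ L)^{-1}\circ H^{n_k}\circ(\phi\circ L)=F^{sn_k}\circ L^{n_0}$ and tests against $z\in K_F^+\setminus K_F$, where the left side stays bounded and the right side escapes if $s<0$. Both approaches are valid; yours is a little more direct and also immediately delivers $s\ge1$ (needed to apply \cite{CD:rigidity}), which the paper's explicit conclusion ($s\ge0$) leaves to the reader via the entropy of $H$.
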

\begin{proof}
Note that, $\phi^{-1} \circ H \circ \phi(K_F^+)\subset K_F^+$ and is also an automorphism of $\C^2$. So by the above result, there exist an affine map $L$ and $r,s \in \mathbb{Z}$ with $r>0$ such that $\phi^{-1} \circ H^r \circ \phi=L \circ F^s$. In particular,
\[L^{-1} \circ \phi^{-1} \circ H^r \circ \phi \circ L=F^s \circ L, \text{ i.e., }(\phi \circ L)^{-1}\circ H^r \circ (\phi \circ L)=F^s \circ L.\]
Also, $L(K_F^\pm)=K_F^\pm$ and there exists $k \ge 1$ such that $F^k \circ L=L \circ F^k$.  Then there exist a subsequence $\{n_k\}$ of positive integers and a positive integer $n_0$ such that
\[(\phi \circ L)^{-1}\circ H^{rn_k} \circ (\phi \circ L)=F^{sn_k} \circ L^{n_0}.\]
Now for $z \in K_F^+ \setminus K_F$, the left hand side of the above identity is bounded, however the right side diverges if $s$ is a negative non-zero integer. Hence $s \ge 0$. By Theorem A of \cite{CD:rigidity}, it follows that $ \phi\circ L$ is a polynomial automorphism, and consequently $\phi$ is a polynomial.
\end{proof}

\section{Proof of Theorem \ref{t:equivalence of automorphisms} and Corollary \ref{c:automorphism of C2}}\label{s:4}
\noindent Recall that for $c >0$, 
\[ \Omega_c=\{z \in \C^2: G_H^+(z)<c\} \text{ and } \Om_c'=\Om_c \setminus K^+\]
Consider the group isomorphisms $\Phi_{H,c}^\pm: {\rm Aut}(\Omega_c') \to {\rm Aut}(\Omega_{cd^{\pm}}')$ on ${\rm Aut}(\Omega_c')$, analogous to the definitions in (\ref{e:group isomorphism}).
\begin{align}\label{e:group isomorphisms_c}
\Phi_{H,c}^+(f_c)&=H\circ f_c \circ H^{-1} \text{ and }\Phi_{H,c}^-(f_c)=H^{-1}\circ f_c \circ H\text{ for } f_c \in {\rm Aut}(\Omega_c').
\end{align}
for every $c>0$. As before,  
\[ \Phi_{H,c}^\pm\left({\rm Aut}_1(\Omega_c')\right)={\rm Aut}_1(\Omega_{cd^\pm}').\] 
\begin{proof}[Proof of Theorem \ref{t:equivalence of automorphisms}]
\medskip For $\al \in  {\rm Aut}_1(U^+)$, by Theorem \ref{t:escaping set}, there exists a unique $\G$-lift of the form $\tilde{\al}=(\beta z+\gamma(\z), \alpha \z)$, such that $\alpha^{d_0(d-1)}=1$ and $\beta=\alpha^{d+d'}$. Note $\tilde{\al}(\C \times \mathcal{A}_c)=\C \times \mathcal{A}_c$ for every $c>0$. Hence, by Theorem \ref{t:revised BPV}, $\al(\Omega_c')=\Omega_c'$ for every $c>0$ and $\al$ induces identity in the fundamental group of $\Omega_c'$, i.e., $\al \in {\rm Aut}_1(\Omega_c')$ and ${\rm Aut}_1(U^+) \subset {\rm Aut}_1(\Omega_c')$ for every $c>0$. 

\medskip Recall the map $\G_c$ discussed in the proof Theorem \ref{t:revised BPV}, depending on the unique lift of form (\ref{e:unique lift}) of every element in ${\rm Aut}_1(\Omega_c')$. As in earlier sections, we will denote this lift as the $\G_c$-lift. By Theorem \ref{t:revised BPV}, the covering map $\h{\Pi}$ is the same on every $\C \times \mathcal{A}_c$, where $c>0$. Suppose $\al \in {\rm Aut}_1(\Omega_c')$ is such that $\G_c$-lift of $\al$ is $(z+\eta, \z)$. Now by Remark \ref{r:short c2 and Gc}, $(z+\eta,\z)$ also corresponds to the $\G$-lift of an element $\al' \in {\rm Aut}_1(U^+)$. Thus, by Theorem \ref{t:revised BPV}, on $\C \times \mathcal{A}_{c}$
\[\widehat{\Pi} \circ (z+\eta, \z)=\al \circ \widehat{\Pi}(z, \z), \text{  and }\widehat{\Pi} \circ ( z+\eta,  \z)=\al' \circ \widehat{\Pi}(z, \z)\]
on $\cover$. In particular, $\al'$ is the extension of the map $\al$ from $\Omega_c'$ to $U^+$. 

\medskip Suppose $\al \in {\rm Aut}_1(\Omega_c')$ is such that $\G_c$-lift of $\al$ is 
$$(\alpha^{d+d'}z+\gamma(\z), \alpha \z)$$
where $\alpha \in \mathbb{Z}_{d_0(d-1)}$ and $\alpha \neq 1$. Let $\al_n=(\Phi_{H,c}^+)^n \in {\rm Aut}_1(\Omega'_{cd^n})$ for every $n \ge 1$. Furthermore, let the $\G_{cd^n}$-lift of $\al_n$ be of the form $(a_n,\alpha_n)$. Then for some positive integers $m>n\ge 1$, $\alpha_n=\alpha_m$. In particular as $m>n$, $\al_n \circ \al_m^{-1} \in {\rm Aut}_1(\Omega'_{cd^n})$ and the $\G_{cd^n}$-lift of $\al_n \circ \al_m^{-1}$ is of the form $(z+\eta_{nm}, \z)$, $\eta_{nm} \in \C$, by similar steps as in {\it Step 4} of Theorem \ref{t:escaping set} and Remark \ref{r:short c2 and Gc}. Now, by the above, $f=\al_n \circ \al_m^{-1}$ extends to $U^+$ and is in ${\rm Aut}_1(U^+)$. Consequently, $f \in {\rm Aut}_1(\Omega_{cd^m}')$ by the first part of this proof. Hence,
\[\al_n=f \circ \al_m\in {\rm Aut}_1(\Omega_{cd^m}').\] 
Thus $\al_n$ extends from $\Omega_{cd^n}'$ to $\Omega_{cd^m}'$, or equivalently $\al=(\Phi_H^-)(\al_n)$ extends from $\Omega_{c}'$ to $\Omega_{cd^{m-n}}'$.

\medskip Repeating the above argument inductively, we get that $\al$ extends to an automorphism of $\Omega_{cd^{k(m-n)}}'$ for every $k \ge 1$. Now $U^+=\cup_{k=1}^\infty \Omega_{cd^{k(m-n)}}'$. Thus, $\al $ extends to an element of ${\rm Aut}_1(U^+)$ or ${\rm Aut}_1(\Omega_c')\subset {\rm Aut}_1(U^+).$
\end{proof}

\noindent Next as an application of Theorem \ref{t:equivalence of automorphisms}, we will complete 
\begin{proof}[Proof of Corollary \ref{c:automorphism of C2}] Note that by \cite[Proposition 1.2]{BPV:IMRN}, $f(K^+)=K^+$, $f(\Omega_b)=\Omega_b$ for $0<b<c$ and $f \in {\rm Aut}(\Omega_c')$. For every $n \ge 1$ let
\[\tilde{f}_n=f \circ H^n\circ f^{-1} \circ H^{-n} \text{ on } \Omega_c.\]
Then $\tilde{f}_n \in {\rm Aut}_1(\Omega_c')$. By Theorem \ref{t:equivalence of automorphisms}, $\tilde{f}_n$ extends to an automorphism of $U^+$ that induces identity on the fundamental group of $U^+$. Also, as $\ti{f}_n$ is holomorphic on a neighbourhood of $K^+$, $\ti{f}_n$ extends as an automorphism of $\C^2$, say $F_n$. Thus, by Theorem \ref{t:Aut(U+) cap Aut(C2)}, $F_n=L_n$ where $L_n$ is an affine map preserving $K^+$. By Theorem \ref{t:main}, there are only finitely many choices for every $L_n$, $n \ge 1$. Also, $f$ extends to an automorphism of $\Omega_{cd^n}$ as
\[f^{-1}=H^n \circ f^{-1}\circ H^{-n} \circ L_n^{-1} \text{ on } \Omega_{cd^n}.\]
Thus $f \in {\rm Aut}(\C^2)$ as $\C^2=\cup_n\Omega_{cd^n}$, with $f(K^+)=K^+$. Thus by Theorem \ref{t:main}, $f^r=L \circ H^s$ where $r,s \in \mathbb{Z}$ where $L$ is an affine map preserving $K^\pm$. Further $f^r(\Omega_c)=\Omega_c$, hence $f^r=L$ or $f \in {\rm Aut}_1(U^+) \cap {\rm Aut}(\C^2)$. Now the proof follows from Theorem \ref{t:linear maps}.
\end{proof}

Finally, we conclude with the following applications on biholomorphic {\it Short} $\C^2$'s
\begin{thm}\label{t:equivalence of short c2} 
Let $H$ and $F$ be generalised H\'{e}non maps of the form (\ref{e:henon}) of the same degree $d \ge 2$. Suppose that $\Omega_{H,c}$ and $\Omega_{F,c'}$ are biholomorphic, where
\[\Omega_{H,c}=\{z \in \C^2: G_H^+(z)<c\} \text{ and }\Omega_{F,c'}=\{z \in \C^2: G_F^+(z)<c'\}, \]
for some $c, c'>0$. Then there exist polynomial maps $P_1$ and $P_2$  such that 
\[H=P_1 \circ F \circ P_2.\]
In particular, if $d=\pr$ for some prime $\pr\ge 2$ then $P_1$ and $P_2$ are affine maps.
\end{thm}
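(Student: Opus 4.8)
The plan is to leverage Corollary~\ref{c:automorphism of C2} together with Theorem~\ref{t:main} to pin down the biholomorphism between the two \textit{Short} $\C^2$'s. Let $\Theta: \Omega_{H,c} \to \Omega_{F,c'}$ be the given biholomorphism. First I would observe that $\Theta$ carries the escaping set structure across: by \cite[Proposition 1.2]{BPV:IMRN}, $\Theta$ maps $K_H^+ \cap \Omega_{H,c}$ onto $K_F^+ \cap \Omega_{F,c'}$, and hence restricts to a biholomorphism $\Theta': \Omega_{H,c}' \to \Omega_{F,c'}'$ between the associated punctured \textit{Short} $\C^2$'s. The key structural input is that, by Theorem~\ref{t:revised BPV}, both punctured domains are covered by $\C \times \mathcal{A}_{(\cdot)}$ with deck groups that are explicitly given in terms of the respective polynomials $Q_H$ and $Q_F$; since $H$ and $F$ have the same degree $d = \pr^m$, the factor $d_0$ and the fundamental groups $\mathbb{Z}[1/d]$ agree. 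Because $\pi_1$ of a punctured \textit{Short} $\C^2$ is $\mathbb{Z}[1/d]$, whose automorphism group is (up to sign) generated by multiplication by powers of $\pr$, composing $\Theta'$ with an iterate of $H$ (which rescales $c$ by powers of $d$) allows us to assume $\Theta'$ induces either the identity or the sign-reversal on $\pi_1$; the latter is excluded as in the proof of Theorem~\ref{t:aut U+} by the covering-map compatibility argument. So after such an adjustment $\Theta'$ lifts to a biholomorphism $\widetilde{\Theta}: \C \times \mathcal{A}_c \to \C \times \mathcal{A}_{c'}$.

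Next I would analyze the lift $\widetilde{\Theta}$ exactly as in \textit{Step 1} of Theorem~\ref{t:escaping set} and in Case 1/Case 2 of Theorem~\ref{t:revised BPV}(iii): for fixed $\z$ the first coordinate of $\widetilde{\Theta}$ is an affine map of $z$, the second coordinate is independent of $z$, and matching with the deck transformations forces the second coordinate to be $\z \mapsto \alpha \z$ (the orientation-preserving case, $\z \mapsto \alpha e^{c'}/\z$ being ruled out by the divergence estimate used in Case~1). Thus $c = c'$ and $\widetilde{\Theta}(z,\z) = (\beta z + \gamma(\z), \alpha \z)$ with $|\alpha| = |\beta| = 1$, and the intertwining of $\widetilde{\Theta}$ with the deck group gives, after the same computation that produced \eqref{e:step 1}, a relation between $Q_H$ and $Q_F$: namely $\beta\, Q_H(\z) - Q_F(\alpha \z)$ must be absorbed into a coboundary $\gamma(e^{2\pi i/d}\z) - \gamma(\z)$, which by comparing leading terms forces $\alpha^{d+d'} = \beta$ and, crucially, shows $\widetilde{\Theta}$ conjugates the model map $\w{H}$ to the model map $\w{F}$ up to the linear piece $(z,\z)\mapsto(\beta z, \alpha\z)$. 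Descending through the covering maps $\widehat{\Pi}_H$ and $\widehat{\Pi}_F$, this means $\Theta$ conjugates $H$ to $F$ on a neighborhood of $I^-$, i.e., $\Theta \circ H \circ \Theta^{-1} = F$ on $\Omega_{F,c}'$ after possibly replacing $\Theta$ by $\Theta$ composed with a suitable power of $H$.

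Once $\Theta \circ H = F \circ \Theta$ holds on an open subset of $\Omega_{F,c}$, I would propagate it: by the functorial identity \eqref{e:functorial} and analytic continuation (both sides being automorphisms of $\C^2$ once we argue $\Theta$ extends, via the same line-by-line Cauchy-integral extension as in the proof of Lemma~\ref{l:convergence in cover} applied to $\Theta$ across $K_H^+$), we obtain $\Theta \in {\rm Aut}(\C^2)$ with $\Theta(K_H^+) = K_F^+$ and $\Theta H = F \Theta$. Then, exactly as in the final corollary of Section~3, $\Theta$ conjugates $H$ to $F$ in ${\rm Aut}(\C^2)$; by the Friedland--Milnor/Cantat--Dujardin machinery (Theorem~A of \cite{CD:rigidity}) $\Theta$ is a polynomial automorphism, and since generalised H\'enon maps that are polynomially conjugate are in fact conjugate by an affine map composed with the normalizing changes of variables in \eqref{e:henon}, we can write $H = A_1 \circ F \circ A_2$ for affine $A_1, A_2$; alternatively, one extracts $A_1, A_2$ directly from the linear data $(\beta, \alpha)$ and the constant $\gamma(\z_0)$ recorded above. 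The main obstacle I anticipate is the extension step: showing that the biholomorphism $\Theta$ of the bounded domains $\Omega_{H,c}$, $\Omega_{F,c'}$ actually extends across the common boundary piece coming from $K^+$ to a global automorphism of $\C^2$ — this requires carefully combining the covering-space description near $I^-$ with the filtration properties \eqref{e:filtration_1}--\eqref{e:filtration_2} and a normal-families/Cauchy-estimate argument on slices, analogous to but more delicate than Lemma~\ref{l:convergence in cover}, because here there is no a priori sequence of global automorphisms to take a limit of.
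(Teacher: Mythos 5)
Your proposal has two genuine gaps, and you actually identify the second one yourself at the end.

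The first gap is the claim that the intertwining of the lift $\widetilde{\Theta}$ with the deck groups ``crucially shows $\widetilde{\Theta}$ conjugates the model map $\w{H}$ to the model map $\w{F}$''. This does not follow. The deck transformations are determined by $Q_H$, $Q_F$ and the covering structure, but $\w{H}$ is a lift of $H$, \emph{not} a deck transformation, and a biholomorphism between the two punctured \textit{Short} $\C^2$'s has no a priori reason to be $(H,F)$-equivariant. In fact there is no well-posed sense in which $\Theta$ could conjugate $H$ to $F$ at the level of the bounded domains, since $H$ does not map $\Omega_{H,c}$ to itself (it maps it to $\Omega_{H,cd}$). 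Whether $\phi$ eventually intertwines the dynamics is exactly what must be \emph{proved}, and the deck-group intertwining alone is too weak a constraint to yield it. (Relatedly, your assertion that $c=c'$ cannot be right as stated; after composing with $H^{-s}$ to normalize the induced map on $\uppi_1$, the modulus comparison of annuli gives $c=c'd^{-s}$, not $c=c'$.)

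The second gap is the extension of $\Theta$ to all of $\C^2$, which you flag as your ``main obstacle'': Lemma~\ref{l:convergence in cover} needs a sequence of global automorphisms converging, and here you have a single biholomorphism of bounded domains. The paper avoids this entirely by a different mechanism, which is the heart of the proof and is missing from your proposal: one forms the maps $\al_n=\phi^{-1}\circ F^n\circ\phi\circ H^{-n}\in {\rm Aut}_1(\Omega'_{H,cd^n})$, invokes Theorem~\ref{t:equivalence of automorphisms} to extend each $\al_n$ to an element of ${\rm Aut}_1(U_H^+)\cap{\rm Aut}(\C^2)$, and then uses the algebraic identity $\phi=F^n\circ\phi\circ H^{-n}\circ\al_n^{-1}$ to extend $\phi$ to $\Omega_{H,cd^n}$ for every $n$, hence to an automorphism of $\C^2$. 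Only after this extension does one compare Green's currents to get $G_F^{\pm}\circ\phi=a^{\pm1}G_H^{\pm}$, use $d=\pr^m$ to find $s$ with $\phi\circ H^s$ inducing the identity on $\uppi_1$, and then a growth estimate $G_F\circ(\phi\circ H^s)=G_H=\log\|\cdot\|+O(1)$ to conclude $\phi\circ H^s$ is affine; the identity $H=(F^s\circ\phi)^{-1}\circ F\circ(\phi\circ H^s\circ\al_{-s+1}^{-1})$ then reads off $A_1, A_2$ directly. Your last step (quoting Cantat--Dujardin to get polynomial conjugacy and then extracting affine maps) is both unnecessary and not immediate: polynomial conjugacy does not by itself yield a relation $H=A_1\circ F\circ A_2$ with $A_1,A_2$ affine.
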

\begin{proof}  
Let $K_H^+$ and $K_F^+$ denote the non-escaping set of $F$ and $H$, respectively. Let $p$ be a saddle point of $H$ such that the stable manifold $W^s(p)$ is dense in $J_H^+$. Since $J_H^+=\partial K_H^+$, $J_F^+=\partial K_F^+$ are non-smooth (see \cite{BS5}), $G_F^+\circ \phi(W^s_p)$ is bounded and subharmonic, i.e., it is constant. Hence, $\phi(J_H^+) \subset K_F^+$. Similarly, $\phi^{-1}(J_F^+) \subset K_H^+$. Now by following the arguments as in the proof of Proposition 1.2 in \cite{BPV:IMRN}, $G_F^+ \circ \phi(\partial \Omega_b)=a_b>0$ for every $0<b<c$. Hence the punctured short $\C^2$'s are biholomorphic, i.e.,
\[\Omega_{H,c} \setminus K_H^+=\Omega_{H,c}'\simeq  \Omega_{F,{c'}}'=\Omega_{F,{c'}} \setminus K_F^+.\]
The fundamental groups of the punctured {\it Short} $\C^2$'s are isomorphic, in particular,
\[\mathbb{Z}\left[\frac{1}{d}\right]\simeq\uppi_1 \left(\Omega_{H,c}'\right)\simeq\uppi_1 \left(\Omega_{F,{c'}}'\right).\]
Let $\phi: \Omega_{F,c'}\to \Omega_{H,c}$ be the biholomorphism. Define 
\begin{align}\label{e:bihol}
\al_n=\phi^{-1} \circ F^n \circ \phi \circ H^{-n},
\end{align}
for every $n \ge 1$. Note that each $\al_n$ induces the identity map on the fundamental group of $\Omega_{H,cd^n}$, i.e., $\al_n \in {\rm Aut}_1(\Omega_{H,cd^n}')$ for every $n \ge 1$. By Theorem \ref{t:equivalence of automorphisms}, every $\al_n$ extends to an automorphism of $U_H^+$, that induces identity on the fundamental group of $U_H^+$. Hence $\al_n \in {\rm Aut}_1(U_H^+) \cap \Aut{\C^2}$ and $\al_n(K_H^+)=K_H^+$. Further, the above identity (\ref{e:bihol}), now can be used to extend $\phi$ from $\Omega_{F,d^nc'}$ to $\Omega_{H,cd^n}$, via the relation
\[\phi= F^n \circ \phi \circ H^{-n}\circ \al_n^{-1}.\]
Thus $\phi$ extends to an automorphism of $\C^2$ such that $\phi(K_F^+)=K_H^+$, or equivalently their corresponding escaping sets $U_F^+$ and $U_H^+$ are biholomorphic, via the map $\phi$. In particular by Theorem \ref{t:main}, each $\al_n$, $n \ge 1$ is an affine map such that $\al_n(K_H^-)=K_H^-$, $G_H^- \circ {\al}_n=G_H^-$ and $\al_n \in \mathcal{L}$. Since there are only finitely many choice for every $\al_n$, it follows from (\ref{e:bihol}) that $\phi(K_H^-)=K_F^-$ or $\phi(U_H^-)=U_F^-$. Thus the statement follows from Corollary \ref{c:poly conjugate}. 

\medskip  Now $\phi^*(\mu_H^-)$ and $\phi^*(\mu_H^+)$ are $dd^c$-closed $(1,1)$-positive currents supported on $K_H^-$ and $K_H^-$, respectively. By arguing similarly as in the proof of Theorem A in \cite[page 3748]{CD:rigidity}, there exist constants $a,b>0$ such that
\[G_F^+\circ \phi(z)=a G_H^+(z) \text{ and }G_F^-\circ \phi(z)=b G_H^-(z). \] Since $\phi$ is an automorphism of $\C^2$,
\[1=\int_{\C^2} \mu_H^+ \wedge \mu_H^-=\int_{\phi(\C^2)} \phi^*(\mu_H^+ \wedge \mu_H^-)=\int_{\C^2} ab\mu_F^+ \wedge \mu_F^-=ab.\]
Hence $b=a^{-1}$. Assuming $d=\pr$ we prove the following

\medskip
{\it Claim: }There exists $s \in \mathbb{Z}$ such that 
\[G_F^+\circ \phi\circ H^s(z)= G_H^+(z) \text{ and }G_F^-\circ \phi\circ H^s(z)= G_H^-(z). \]
Note that $\phi$ induces an isomorphism between the fundamental groups of $U_H^+$ and $U_F^+$. In particular, $\iso_\phi(x)=\pr^{-s} x$ for some $s \in \mathbb{Z}$, since $d=\pr$. See proof of Theorem \ref{t:aut U+}, in this context. Thus $f=\phi \circ H^s$ induces identity on $\uppi_1(U^+_H)$. Hence, by the arguments as in {\it Step 1} of the proof of Theorem \ref{t:escaping set}, $f$ lifts as an automorphism of $\cover$, say $\tilde{f}$ such that
\[ f\circ \h{\Pi}_H=\h{\Pi}_F \circ \tilde{f}. \]
Again by the arguments as in {\it Step 1} of the proof of Theorem \ref{t:escaping set},
\[ \ti{f}(z,\z)=(\beta(\z)(z)+\gamma(\z),\alpha \z )\]
where $|\alpha|=1$ and $\beta$, $\gamma$ holomorphic function on $\C \setminus \ov{\mathbb{D}}$. Thus if $(x,y) \in \Omega_{c,H}$. Let $(z,\z) \in \cover$ such that $\h{\Pi}_H(z,\z)=(x,y)$. Then $\h{\Pi}_F \circ \tilde{f}(z,\z) \in \Omega_{c,F}'$, or $f(\Omega_{c,H}')\subset \Omega_{c,F}'$. Similarly for $f^{-1}$, hence $f(\Omega_{c,H}')=\Omega_{c,F}'$. Thus the claim.

\medskip Thus $G_F \circ f=G_H$ where $G_H=\max\{G_H^+,G_H^-\}$ and $G_F=\max\{G_F^+,G_F^-\}$. Now for $p \in \C^2$
\[ G_F(p)=\log|p|+O(1) \text{ and } G_H(f(p))=\log|f(p)|+O(1)=\log|p|
+O(1).\]
Hence $\|f(p)\|<M\|p\|+C$ for every $p \in \C$. This proves that $f$ is an affine map. Similarly $g=F^{-1} \circ f \circ H$ is also an affine map as it induces identity on the fundamental group. Thus
\[F=f \circ H \circ g.\qedhere\]
\end{proof}
\begin{cor}\label{c:equivalence of short c2}
Let $F$ be generalised H\'{e}non maps of the form (\ref{e:henon}) and $H$ be a simple H\'{e}non map of degree $d_H=\pr$ for some prime $\pr\ge 2$. Suppose that $\Omega_{H,c}$ and $\Omega_{F,c'}$ are biholomorphic then there exist $m \ge 1$ and affine maps $A_1$ and $A_2$ such that 
\[H^m=A_1 \circ F \circ A_2.\]
\end{cor}
\begin{proof} 
As $\Omega_{H,c}$ and $\Omega_{F,c'}$ are biholomorphic,
\[\mathbb{Z}\left[\frac{1}{\pr}\right]\simeq\uppi_1 \left(\Omega_{H,c}'\right)\simeq\uppi_1 \left(\Omega_{F,{c'}}'\right).\]
Hence $d_F=\pr^m$ for some $m \ge 1$ and the degrees of $F$ and $H^m$ are same. Let $H'= H^m$, then $G_H=G_{H'}$. Now by replicating the above proof with $H=H'$ (except, the map $f=\phi \circ H^s$) proves the statement.
\end{proof}
\begin{rem}
The above Corollary \ref{c:equivalence of short c2} proves that there exists a positive integer $m \ge 1$ such that $m-$the root of an appropriate linear conjugate of $F$, which is known to be a polynomial by \cite{BF:roots} is $H$, provided they induce biholomorphic short $\C^2$'s at certain values.

\medskip
Finally also note, the isomorphism induced by the map $\phi$ (in the above proofs) on the fundamental group $\mathbb{Z}\left[\frac{1}{d}\right]$, is $\iso_\phi(x)=u_\phi x$,  where $u_\phi$ is an unit in the ring $\mathbb{Z}\left[\frac{1}{d}\right]$. Suppose $u_\phi d^n  \neq 1$ for every $n \in \mathbb{Z}$, then the above argument will not work, hence the above results (on affine conjugation) could not be stated generally for arbitrary degrees $d \ge 2$.
\end{rem}


{\small\bibliographystyle{amsplain}

}
\end{document}